\DeclareMathAlphabet{\itbf}{OML}{cmm}{b}{it}
\def\bq{{{\itbf q}}}
\def\br{{{\itbf r}}}
\def\by{{{\itbf y}}}
\def\bx{{{\itbf x}}}
\def\bk{{{\itbf k}}}
\def\bu{{{\itbf u}}}
\def\bv{{{\itbf v}}}
\def\bw{{{\itbf w}}}
\def\bzeta{{\boldsymbol{\zeta}}}
\def\bxi{{\boldsymbol{\xi}}}
\def\balpha{{\boldsymbol{\alpha}}}
\def\bbeta{{\boldsymbol{\beta}}}
\def\eps{{\varepsilon}}
\newcommand{\NN}{\mathbb{N}}
\newcommand{\RR}{\mathbb{R}}
\newcommand{\EE}{\mathbb{E}}
\newtheorem{thm}{Theorem}[section]
\newtheorem{corollary}[thm]{Corollary}
\newtheorem{lemma}[thm]{Lemma}
\newtheorem{proposition}[thm]{Proposition}
\numberwithin{equation}{section}
\begin{document}  
 
 \title{Fourth-moment Analysis for Wave Propagation in the White-Noise Paraxial Regime} 

\author{Josselin
Garnier\thanks{\footnotesize Laboratoire de Probabilit\'es et
Mod\`eles Al\'eatoires \& Laboratoire Jacques-Louis Lions,
Universit\'e Paris Diderot, 75205 Paris Cedex 13, France
(garnier@math.univ-paris-diderot.fr)} 
\and Knut S\o lna\thanks{\footnotesize Department of Mathematics, 
University of California, Irvine CA 92697
(ksolna@math.uci.edu)}
}

\maketitle

\begin{abstract}
In this paper we consider the It\^o-Schr\"odinger model for wave propagation in 
random media in the paraxial regime.
We solve the equation for the fourth-order moment of the field in the regime where
the correlation length of the medium is smaller than the initial beam width. 
As applications we prove that the centered fourth-order moments of the field
satisfy the Gaussian summation rule, we derive the covariance function of the intensity of the transmitted beam,
and the variance of the smoothed Wigner transform of the transmitted field. The second application
is used  to explicitly quantify the scintillation of the transmitted beam and the third application
to quantify the statistical stability of the Wigner transform.
\end{abstract}

\noindent {\footnotesize {\bf AMS subject classifications.}  
60H15, 35R60, 74J20.}

\noindent {\footnotesize {\bf Key words.} 
Waves in random media, parabolic approximation, scintillation, Wigner transform.}

\section{Introduction}


In many wave propagation scenarios the medium is not constant,
but varies in a complicated fashion on a scale that may be small compared
to the total propagation distance.  This is the case for wave propagation through the
turbulent atmosphere, the earth's crust, the ocean, and complex biological tissue for instance. 
If one aims  to use transmitted or reflected waves for communication or imaging
purposes it is important to characterize how such microstructure
affects and corrupts the wave. Such a characterization is particularly
important for modern imaging techniques such as seismic interferometry
or coherent interferometric imaging that correlate  wave field traces 
that have been strongly corrupted by the microstructure and use their
space-time correlation function for imaging. 
The wave field correlations are second-order moments of the wave field 
and a characterization of the signal-to-noise
ratio then involves a fourth-order moment calculation.

Motivated by the situation described above we consider  wave  propagation
through time-independent media with a complex 
spatially varying index of refraction that can be modeled as the realization of
a random process.  Typically 
we cannot expect to know the index of refraction pointwise, but we may be able to
characterize its statistics and we are interested in how the statistics of the medium
affects  the statistics of the wave  field.
In its most common form,
the analysis of wave propagation in random media consists in studying the field $v$ solution
of the scalar time-harmonic wave or Helmholtz equation
\begin{equation}
\label{eq:scalarwave}
\Delta v +k_0^2 n^2(z,\bx) v = 0, \quad \quad (z,\bx) \in \RR \times \RR^{2},
\end{equation}
where $k_0$ is the free space homogeneous wavenumber and $n$ is a randomly heterogeneous index of refraction.
Since the index of refraction $n$ is a random process, the field  $v$ is also a random process
whose statistical behavior can be characterized  by the calculations of its moments.
Even though the scalar wave equation is simple and linear, the relation between the statistics
of the index of refraction and the statistics of the field is highly nontrivial and nonlinear.
In this paper we consider a primary scaling regime corresponding 
to long-range beam propagation and small-scale medium fluctuations giving
negligible backscattering.
This is  the so-called white-noise paraxial regime, as described by the 
It\^o-Schr\"odinger model, which is presented  in Section \ref{sec:model}.
This model is  a simplification of the model (\ref{eq:scalarwave}) since it corresponds
to an evolution problem, but yet in the regime that we consider it describes the propagated field
in a weak sense in that it gives the correct statistical structure of the wave field.
The It\^o-Schr\"odinger model   can be derived  rigorously from (\ref{eq:scalarwave})
by a separation of scales technique in the high-frequency regime (see \cite{bailly96} in the case of a randomly layered
medium and \cite{garniers0,garniers1,garniers2} in the case of a three-dimensional random medium).
It  models  many situations, for instance
laser beam propagation \cite{strohbehn},
time reversal in random media  \cite{blomgren,PRS04},
underwater acoustics \cite{tappert},
or migration problems in geophysics \cite{claerbout85}.
The It\^o-Schr\"odinger model   allows for the use of  It\^o's stochastic calculus,  
which in turn enables the  closure of the hierarchy of moment equations \cite{fps,ishimaru}.
Unfortunately, even though  the equation for the second-order moments can be solved,
the equation for the fourth-order moments is very  difficult and only approximations
or numerical solutions are available (see  \cite{fante75,gozani,tatarski71,uscinski,whitman} 
and \cite[Sec.~20.18]{ishimaru}).

Here, we consider a secondary scaling regime
corresponding to the so-called scintillation regime and in this regime we derive explicit
expressions for the fourth-order moments.
The scintillation scenario  is a well-known paradigm, related to the observation that 
the irradiance of a star fluctuates due to interaction of the light with  the turbulent atmosphere.
This common observation is far from being fully understood mathematically.
However,  experimental observations indicate that the statistical distribution of the irradiance is exponential, with the irradiance being the square magnitude of the complex wave field. 
Indeed it is a well-accepted conjecture in the physical literature that the statistics of the complex
wave field becomes 
circularly  symmetric complex  
Gaussian when the wave propagates through the turbulent atmosphere \cite{valley,ya},
so that the irradiance is the sum of the squares of two independent real Gaussian random variables, which 
has chi-square distribution with two degrees of freedom, that is 
an exponential distribution.
However,  so far there is no mathematical proof of this conjecture, except in randomly layered media 
\cite[Chapter 9]{book1}.  The regime we consider here, which 
we refer to as the scintillation regime,  gives results
for the fourth-order moments that are consistent with the scintillation or Gaussian conjecture.
We prove in Section \ref{sec:gsr} that the incoherent zero-mean wave field (i.e. the fluctuations of the wave field
defined as the difference between the field and its expectation) has fourth-order moments that obey the Gaussian summation rule.
As a result we can discuss the statistical character of the irradiance 
 in detail in Section \ref{sec:intensity}.

 Certain functionals of the solution to the white-noise paraxial wave equation can be characterized  in 
some specific regimes  \cite{balMMS,bal,fann06,PRS07}.  An important aspect of such characterizations
is the so-called  statistical stability  property  which corresponds
to functionals of the wave field becoming deterministic in the considered scaling regime.  
This is in particular the case in the limit of rapid decorrelation of the medium  fluctuations
(in both longitudinal and lateral coordinates). 
As shown in \cite{balMMS} the statistical stability also depends on the initial data and can be lost for very rough initial data even with a high lateral diversity as considered there.
In \cite{ryzhikCMP,ryzhikDC} the authors also   consider  a situation with rapidly fluctuating 
random medium fluctuations  and a regime in which the so-called Wigner
 transform itself is statistically stable. 
The Wigner transform is known to be a convenient tool
to analyze problems involving the Schr\"odinger equation \cite{gerard,ryzhik}. 
In Section \ref{sec:wigner} we are able to push through
a detailed and quantitative analysis of the stability of this quantity using our results on the fourth-order moments.
An important aspect of our analysis
is that we are able to derive an explicit expression of the coefficient of variation of the smoothed Wigner transform
as a function of the smoothing parameters, in the general situation in which 
the standard deviation can be of the same order as the mean.  This is a realistic scenario,
we are not deep into a statistical stabilization situation, 
but in a situation where the parameters of the problem
give partly coherent but fluctuating  wave functionals.  Here we are for the first time
able to explicitly quantify such fluctuations and how their magnitude
can be controlled by smoothing of the Wigner transform.   We believe that these 
results are important for the many applications where the smoothed Wigner transform appears naturally.  
 
 The outline of the paper is as follows:  In Section \ref{sec:model} we introduce
the  It\^o-Schr\"odinger model.
In Section \ref{sec:main} we summarize our main results.
In Sections \ref{sec:mom1}-\ref{sec:mom} we describe the general equations for the moments of the field.
In Section \ref{sec:second} we discuss 
the second-order moments. In Section \ref{sec:fourth} we introduce and analyze the fourth-order moments
and the particular parameterization that will be useful to untangle these. 
In Section \ref{sec:regime} we introduce the so-called scintillation regime where
we can get an explicit characterization of the fourth-order moments via the main result of
the paper presented in  Proposition \ref{prop:sci1}.
Next we discuss three applications of the main result:
In Section \ref{sec:gsr} we prove that the centered fourth-order moments
satisfy the Gaussian summation rule,
in Section \ref{sec:intensity}
we compute the scintillation index, and in Section \ref{sec:wigner} we analyze the statistical stability of the 
smoothed Wigner transform. 

\section{The White-Noise Paraxial Model}
\label{sec:model}%
Let us consider the time-harmonic wave equation with homogeneous wavenumber $k_0$,
random index of refraction $n(z,\bx)$, and source in the plane $z=0$:
\begin{equation}
\label{eq:helm}
\Delta v + k_0^2 n^2 (z,\bx) v = - \delta(z) f(\bx) \, ,
\end{equation}
for $\bx \in \RR^2$ and $z\in [0,\infty)$.
Denote by $\lambda_0$ the carrier wavelength (equal to $2\pi /k_0$), by $L$ the typical propagation distance, and by $r_0$
the radius of the initial transverse source. The paraxial regime holds when the wavelength $\lambda_0$ is much smaller
than the radius $r_0$, and when the propagation distance is smaller than or of the order of $r_0^2/\lambda_0$ (the so-called Rayleigh length).
The white-noise paraxial regime that we address in this paper holds when, additionally, the medium has random fluctuations,
the typical amplitude of the medium fluctuations is small, and the correlation length of the medium fluctuations is larger than the wavelength and smaller than the propagation distance.
In this regime the solution of the time-harmonic wave equation (\ref{eq:helm}) can be approximated by \cite{garniers1}
$$
v(z,\bx) = \frac{i}{2k_0} u( z,\bx) \exp \big( i k_0 z\big) ,
$$
where $(u(z,\bx) )_{z \in [0,\infty),\bx\in \RR^2}$ is the solution 
of the  It\^o-Schr\"odinger equation
\begin{equation}
\label{eq:model}
 d {u}(z,\bx)   =     
          \frac{ i }{2k_0} \Delta_{\bx}   {u}(z,\bx) dz
   +   \frac{ik_0}{2}   {u} (z,\bx ) \circ  d{B}(z,\bx) 
  , 
\end{equation}
with the initial condition in the plane $z=0$:
$$
 {u}(z= 0,\bx )  = f(\bx) .
$$
Here the symbol $\circ$ stands for the Stratonovich stochastic integral and
  $B(z,\bx)$ is a real-valued  Brownian field over $[0,\infty) \times \RR^2$ with  covariance
 \begin{equation}
 \label{defB}
\EE[   {B}(z,\bx)  {B}(z',\bx') ] =  
 {\min\{z, z'\}}   {C}(\bx - \bx')   .
\end{equation}
The model (\ref{eq:model}) can be obtained from the 
scalar wave equation (\ref{eq:helm}) by a separation of scales technique in which the three-dimensional 
fluctuations of the index of refraction $n(z,\bx)$ are described by a zero-mean stationary random process $\nu(z,\bx)$ with 
mixing properties: $n^2(z,\bx)=1+\nu(z,\bx)$. The covariance function $C(\bx)$ in (\ref{defB}) is then given in terms
of the two-point statistics of the random process $\nu$ by
\begin{equation}
\label{corrdef}
 {C} (\bx) := \int_{-\infty}^\infty \EE[\nu(z'+  z,\bx'+  \bx) \nu(z',\bx')]   dz .  
\end{equation}
The covariance function $C$ is assumed to satisfy the following hypothesis:
\begin{equation}
\label{hyp:h}
\mbox{$C \in L^1(\RR^2)$ and $C(0) < \infty$.}
\end{equation}
The condition $C\in L^1(\RR^2)$ imposes that the Fourier transform
$\hat{C}$ is continuous and bounded by  Lebesgue dominated convergence theorem, 
and it is also nonnegative by Bochner's  theorem (it is the power spectral density of a stationary process).
The condition $C(0)< \infty$ then shows that $\hat{C} \in L^1(\RR^2)$,
and therefore $C$ is continuous and bounded.   \\
The white-noise paraxial model is widely used in the physical literature.
It simplifies the full wave equation (\ref{eq:helm}) by replacing it with the initial value-problem (\ref{eq:model}).
It was studied mathematically in \cite{dawson84}, in which the solution of
(\ref{eq:model})  is shown to be the solution of a martingale problem 
whose $L^2$-norm is preserved in the case $f \in L^2(\RR^2)$.
The derivation of the  It\^o-Schr\"odinger equation
(\ref{eq:model}) from the three-dimensional wave equation in 
randomly scattering medium is given in~\cite{garniers1}.

\section{Main Result and Quasi Gaussianity}
\label{sec:main}%
Modeling with the white-noise paraxial model is often motivated 
by  propagation through  randomly heterogeneous media.
The typical objective for such
modeling is to describe some communication or imaging scheme,
say with an object buried in the random medium. 
In many wave propagation and imaging  scenarii the quantity of interest
is   given by a quadratic quantity of the field $u$.   
For instance, in the time-reversal problems a wave field emitted by the source is
recorded on an array,  then time-reversed and re-propagated into the medium \cite{fink}. 
The forward and time-reversed propagation paths give rise to a quadratic
quantity in the field itself for the re-propagated field.  
 Moreover, in important  imaging approaches,
in particular passive imaging techniques \cite{GP}, 
the image is formed based on computing cross correlations of the field measured over an array
again giving a quadratic expression in  the field for the quantity of 
interest,  the correlations.   In  a number of situations, in particular in optics, 
 the measured quantity is
an  intensity, again a quadratic quantity in the field.
As we explain in  Section \ref{sec:second} the  expected value of 
such quadratic quantities 
can  in the white-noise paraxial regime  be computed explicitly. 
In imaging applications this allows to compute the mean image
and assess issues like resolution. However, it is important to go beyond this
description and calculate  the signal-to-noise ratio which requires
to compute a fourth-order moment of the wave field.
 Despite the importance of the signal-to-noise ratio hitherto no rigorous result has been available that accomplishes this task.  Indeed explicit expressions for the fourth moments has been a long standing open problem.  
This is what  we push through  in this paper.  In the context of design of imaging techniques this insight is important
to make proper balance in between noise and resolution in the image.  
We remark that in certain regimes one may be able to prove 
statistical stability, that is,  that the signal-to-noise ratio goes to infinity in the scaling limit
\cite{PRS04,PRS07}. The results we present here are more general in the sense that we can actually 
describe a finite signal-to-noise ratio and how  the parameters of the problem determine this. 

To summarize  and explicitly articulate the main result regarding the fourth-order  moment  we
consider first the first and second order moments  of $u$ in (\ref{eq:model})  
in the context when  $f(x)= \exp[-|\bx|^2/(2r_0^2)]$. 
We use the notations for the first and second-order moments
\begin{equation}
\label{def:mu1}
 \mu_1(z,\bx) :=\EE[ u(z,\bx)] , \quad \mu_2(z,\bx,\by) :=\EE[ u(z,\bx)   \overline{u(z,\by)} ] ,
\end{equation}
 Note that $\mu_2$ is given explicitly in (\ref{eq:transb12}). 
  For the second centered moment we use the notation:
\begin{equation}
\tilde\mu_2(z,\bx,\by) :=\mu_2(z,\bx,\by)- \mu_1(z,\bx)   \overline{\mu_1(z,\by)} .
\end{equation}

Then,  to obtain an expression for the fourth-order moment,  one heuristic approach 
often used in the literature \cite{fann09,ishimaru}  is to assume Gaussianity.
Consider any complex circularly symmetric Gaussian process $(Z(\bx))_\bx$  then we have 
\cite{reed62} that  the fourth-order moment can be expressed
in terms of the second-order moments by the Gaussian summation rule    as
\begin{eqnarray}
 \nonumber
 \EE \big[ Z(\bx_1) Z(\bx_2) \overline{Z(\by_1) Z(\by_2) } \big]  &=&
 \EE\big[ Z(\bx_1)  \overline{Z(\by_1)  }   \big] \EE \big[ Z(\bx_2) \overline{ Z(\by_2) } \big] 
\\
&&+
 \EE\big[ Z(\bx_1)  \overline{ Z(\by_2) }  \big] \EE \big[Z(\bx_2) \overline{ Z(\by_1) } \big]  .
\label{eq:gaussrule}
\end{eqnarray}
If the centered field $u(z,\bx) -\mu_1(z,\bx)$ were a  complex circularly symmetric Gaussian process,
then the fourth-order moment of the field $u$ defined by:
$$
\mu_4(z,\bx_1,\bx_2,\by_1,\by_2) :=  \EE[ u(z,\bx_1)   u(z,\bx_2)   \overline{u(z,\by_1) u(z,\by_2)} ]   
$$ 
would satisfy:
\begin{eqnarray*}
&&
\mu_4(z,\bx_1,\bx_2,\by_1,\by_2) =
     \mu_1(z,\bx_1)   \mu_1(z,\bx_2)  \overline{\mu_1(z,\by_1)} \overline{  \mu_1(z,\by_2)}  \\
    &&  \quad +  \mu_1(z,\bx_1)  \overline{ \mu_1(z,\by_1)}  \tilde\mu_2(z,\bx_2,\by_2)  
     + \mu_1(z,\bx_2)   \overline{\mu_1(z,\by_1)}  \tilde\mu_2(z,\bx_1,\by_2)   \\ 
     &&  \quad
     + \mu_1(z,\bx_1)  \overline{ \mu_1(z,\by_2) } \tilde\mu_2(z,\bx_2,\by_1) 
      + \mu_1(z,\bx_2)  \overline{ \mu_1(z,\by_2)}  \tilde\mu_2(z,\bx_1,\by_1) 
     \\
 &&   \quad+  \tilde\mu_2(z,\bx_1,\by_1) \tilde\mu_2(z,\bx_2,\by_2) +
 \tilde\mu_2(z,\bx_1,\by_2) \tilde\mu_2(z,\bx_2,\by_1) ,
\end{eqnarray*}
or equivalently:
\begin{eqnarray}
\nonumber
\mu_4(z,\bx_1,\bx_2,\by_1,\by_2) &=&\mu_2(z,\bx_1,\by_1)  \mu_2(z,\bx_2,\by_2) +
 \mu_2(z,\bx_1,\by_2)  \mu_2(z,\bx_2,\by_1) \\
&& -
    \mu_1(z,\bx_1)   \mu_1(z,\bx_2)  \overline{\mu_1(z,\by_1)} \overline{  \mu_1(z,\by_2)} .
\label{eq:gsr}
\end{eqnarray}
This result is not correct in general. 
For instance, in the spot-dancing regime addressed in \cite{dawson84,furutsu72,furutsu73},
the explicit calculation of the moments of all orders is carried out and exhibits non-Gaussian statistics,
in particular, the intensity follows a Rice-Nakagami statistics.
The spot-dancing regime is valid for a narrow initial beam,  strong medium fluctuations, and short propagation distance:
\begin{eqnarray*}
r_0 = r_0'\eps, \quad  C(\bx) = \eps^{-2} C'(\bx), \quad z  = z' \eps ,
\end{eqnarray*}
with $\eps \ll 1$ and the primed quantities of order  one. 

We show however in this paper that in the so-called
scintillation regime the Gaussian summation rule (\ref{eq:gsr}) is valid .
The scintillation regime is discussed  in detail in Section
\ref{sec:regime}, it is characterized by a wide initial beam, a long propagation distance, and weak
medium fluctuations:
\begin{eqnarray*}
r_0 = r_0'/\eps, \quad  C(\bx) = \eps C'(\bx), \quad z  = z'/\eps ,
\end{eqnarray*}
with $\eps \ll 1$ and 
 the primed quantities of order  one. 
Moreover, in the scintillation regime, if the source spatial profile is Gaussian with radius $r_0$:
\begin{equation}
\label{inigaus}
f (\bx) =
 \exp\Big( - \frac{|\bx|^2}{2r_0^2 }\Big)  ,
\end{equation}
then
\begin{eqnarray*}
\mu_2(z,\bx,\by) &=&
\frac{r_0^2}{4\pi}   \exp\left( - {\frac{k_0^2  C({\bf 0}) z}{4}}  \right)
 \int_{\RR^2}
\exp \Big(  -
\frac{r_0^2   |\bxi|^2 }{4} + i \frac{\bxi \cdot (\bx+\by)}{2}  \Big)\\
&& \times \exp \Big( \frac{k_0^2}{4} \int_0^z
 {C}\big( \bx-\by  - \bxi \frac{z'}{k_0}  \big)
  dz' \Big)  
 d \bxi 
 \end{eqnarray*}
and  
\begin{eqnarray*}
 \mu_1(z,\bx) =     \exp\left( - {\frac{ |\bx|^2}{2 r_0^2}}  \right) 
  \exp\left( - {\frac{k_0^2  C({\bf 0}) z}{8}}  \right) .
\end{eqnarray*}
Note that, in the scintillation regime, the field is partially coherent:
the coherent field (i.e., the mean field $\mu_1$) has an amplitude which is of the same order
as the standard deviation of the zero-mean incoherent field (i.e., the fluctuations of the field $u-\mu_1$).
The surprising result that we report in this paper is that the incoherent field behaves like a random field
with Gaussian statistics, as far as the fourth-order moments are concerned.

Finally in the strongly scattering scintillation regime when $k_0^2 {C}({\bf 0}) z \gg 1$ so that the mean field $\mu_1$ is vanishing and 
the field becomes completely incoherent, we   have in fact:
\begin{eqnarray*} 
  \mu_4(z,\bx_1,\bx_2,\by_1,\by_2)  \approx      \mu_2(z,\bx_1,\by_1)  \mu_2(z,\bx_2,\by_2) +
  \mu_2(z,\bx_1,\by_2)  \mu_2(z,\bx_2,\by_1)  .
\end{eqnarray*}
   
These results can now be used to discuss a wide range of applications 
in imaging and wave propagation.  
The  fourth moment  is a fundamental quantity in the context of waves in complex media and the above result
is the first rigorous derivation of it that makes explicit the  particular scaling regime in
which  it is valid, moreover, 
when in fact the Gaussian assumption can be used.  

In this paper we also discuss application
to characterization of the scintillation in Section \ref{sec:intensity}. The scintillation index 
describes the relative intensity fluctuations for the wave field. 
Despite being a fundamental physical quantity associated for instance with 
light propagation through the atmosphere, a rigorous derivation was not obtained before.
We moreover  give an explicit characterization of the signal to noise ratio for  the Wigner  transform
in Section \ref{sec:wigner}. The Wigner transform is a fundamental quadratic form of the field  
that is useful  in the context  of analysis of problems involving paraxial or Schr\"odinger equations, for instance time-reversal problems. 
 
 We remark finally that the results derived here can be useful in the analysis 
 of ghost imaging  experiments  \cite{cheng09,li10,shapiro12}, 
 enhanced  focusing \cite{popoff14,vellekoop10,vellekoop07,vellekoop08}  and super-resolution imaging problems \cite{katz12,mosk12,popoff10}, 
 and intensity correlation \cite{webb2,webb3}. Results on this will 
 be reported  elsewhere.
 
{\it Ghost imaging}  is a fascinating recent imaging methodology.
It can be interpreted as a correlation-based technique
since it gives an image of an object by correlating the intensities
measured by two detectors, a high-resolution detector  that does not view the object and a
low-resolution detector that does view the object.
The resolution of the image depends on the coherence properties
of the noise sources used to illuminate the object, 
and on the scattering properties of the medium. 
This problem can be understood at the mathematical level
 by using the results presented in this paper. 
  
{\it Enhanced focusing} refers to  schemes for communication and imaging 
in a  case where a reference signal propagating through the channel is 
available. Then this information can be used to design an optimal probe that focuses tightly 
at the desired focusing point. How to optimally design and analyze such schemes, 
given the limitations of the transducers and so on,  can be analyzed using the moment
theory presented in this paper.  More generally 
{\it Super resolution}  refers to the case where one tries to go beyond the classic diffraction limited resolution
in imaging systems.

{\it Intensity correlations} is a recently proposed scheme for communication in the optical regime
 that is based on using cross corrections of intensities, as measured in this regime, for communication.
 This is a promising scheme for communication through relatively strong clutter.  By using the 
 correlation of the intensity or speckle for different incoming angles of the source one can
 get spatial information about the  source. The idea of using the information about the statistical
 structure of speckle to enhance signaling is very interesting and corroborates the idea
 that modern schemes for communication and imaging   require a mathematical theory for
 analysis of high-order moments. 
     
  The results derived in this paper have already opened the mathematical
  analysis of important imaging problems and we believe that many more problems
  than those mentioned here will benefit from  the results regarding the fourth moments.
  In fact,   enhanced transducer technology and sampling schemes allow for
  using finer aspects of the  wave field involving second- and fourth-order moments 
  and in such complex cases a rigorous mathematical analysis is important to support,
  complement,  or actually disprove, statements based on physical intuition alone. 
 
\section{The Mean Field}
\label{sec:mom1}%
In this section we give the expression of the mean field, that is, the first-order moment 
$\mu_1(z,\bx)$ defined by (\ref{def:mu1}).
Using  It\^o's formula for Hilbert space-valued processes \cite{miyahara82}
(the process $u\big(z,\bx)$ takes values in $L^2(\RR^2)$), 
 we find that the function $\mu_1$ satisfies the damped
Schr\"odinger equation  in $L^2(\RR^2)$:
\begin{equation}
\frac{\partial \mu_1}{\partial z} =       
          \frac{ i }{2k_0} \Delta_{\bx}  \mu_1
  -\frac{k_0^2 C({\bf 0})}{8}  \mu_1
  , 
\end{equation}
with the initial condition $\mu_1(z= 0,\bx )  = f(\bx)$.
This equation can be solved in the Fourier domain which gives 
\begin{equation}
\mu_1(z,\bx)       =    \frac{1}{(2\pi)^2} \int \hat{f}(\bxi) \exp \Big( i \bxi \cdot \bx - \frac{i |\bxi|^2 z}{2k_0} \Big) d\bxi 
 \exp \Big( -\frac{k_0^2 C({\bf 0}) z}{8} \Big),
\end{equation}
where $\hat{f}$ is the Fourier transform of the initial field:
\begin{equation}
\label{def:fourierf}
\hat{f} (\bxi) = \int {f} (\bx) \exp ( - i \bxi \cdot \bx) d\bx .
\end{equation}
In this paper, unless mentioned explicitly, all integrals are over $\RR^2$.
The exponential damping of the mean field is noticeable,
it can be physically explained by the random phase that the wave acquires as it propagates
through the random medium.
If the initial condition is the Gaussian profile (\ref{inigaus}) then we get
\begin{equation}
\mu_1(z,\bx)       =    
  \frac{r_0^2}{r_z^2} \exp \Big( - \frac{k_0^2 C({\bf 0}) z}{8}\Big)
   \exp \Big( -\frac{|\bx|^2}{2 r_z^2 } \Big),\quad \quad  r_z^2 = r_0^2 \left( 1 + \frac{i z}{k_0 r_0^2}  \right)   .
   \label{eq:expresmu1}
\end{equation}

\section{The General Moment Equations}
\label{sec:mom}%
The main tool for describing wave statistics are the finite-order moments.
We show in this section that in the context of the It\^o-Schr\"odinger equation (\ref{eq:model})
the moments of the field satisfy a closed system at each order
\cite{fps,ishimaru}.
For $p\in \NN$,
we define
\begin{equation}
\label{def:generalmoment}
\mu_{2p}\big(z , (\bx_j)_{j=1}^{p} ,  (\by_l)_{l=1}^{p} \big) :=
\EE \Big[ \prod_{j=1}^p {u}(z,\bx_j) 
 \prod_{l=1}^p \overline{{u}(z,\by_l)} \Big], 
\end{equation}
for $\bx_j  , \by_l  \in {\RR^2}  $ for $j,l=1,\ldots,p$.
Note that here the number of conjugated terms equals the number of non-conjugated 
terms, otherwise the moments decay relatively rapidly to zero due to unmatched
random phase terms associated with random travel time perturbations, as seen in the previous section.
Using the stochastic equation (\ref{eq:model}) 
and It\^o's formula for Hilbert space-valued processes \cite{miyahara82}
(the process $u\big(z,\bx_1)u\big(z,\bx_2)\overline{u\big(z,\by_1)u\big(z,\by_2)}$ takes values in $L^2(\RR^2 \times \cdots \times \RR^2)$),
 we find that the function $\mu_{2p}$ satisfies the 
Schr\"odinger-type system in $L^2(\RR^2 \times \cdots \times \RR^2)$:
\begin{eqnarray}
&&\frac{\partial \mu_{2p}}{\partial z} = \frac{i}{2k_0}  \Big( \sum_{j=1}^p \Delta_{\bx_j}
- \sum_{l=1}^p \Delta_{\by_l} \Big) \mu_{2p} + \frac{k_0^2}{4} {U}_{2p} \big( (\bx_j)_{j=1}^p, (\by_l)_{l=1}^{p}\big)
 \mu_{2p}  ,\\
&& \mu_{2p}(z=0) = \prod_{j=1}^p f(\bx_j) 
 \prod_{l=1}^p \overline{f(\by_l)}  ,
\end{eqnarray}
with the generalized potential
\begin{eqnarray}
\nonumber
&&{U}_{2p}\big( (\bx_j)_{j=1}^p, (\by_l)_{l=1}^{p}\big)  \\
\nonumber
&& := 
\sum_{j,l=1}^p {C}(\bx_j-\by_l) 
- \frac{1}{2}
\sum_{j,j'=1}^p {C}( \bx_j-\bx_{j'})
- \frac{1}{2}
\sum_{l,l'=1}^p {C}( \by_l-\by_{l'}) \\
&&
=
\sum_{j,l=1}^p {C}(\bx_j-\by_l) 
-\sum_{1 \leq j<j'\leq p} {C}( \bx_j-\bx_{j'})
-\sum_{1 \leq l<l'\leq p} {C}( \by_l-\by_{l'}) -
p{C}({\bf 0}) \, . 
\hspace*{0.3in}
\end{eqnarray}

We introduce the Fourier transform
\begin{eqnarray}
\nonumber
&&\hat{\mu}_{2p}\big(z, (\bxi_j)_{j=1}^p, (\bzeta_l)_{l=1}^p \big) =
\iint \mu_{2p}\big(z, (\bx_j)_{j=1}^{p}  , (\by_l)_{l=1}^{p}  \big) \\
&& \quad\quad \times \exp  \Big( - i \sum_{j=1}^p \bx_j \cdot \bxi_j + i \sum_{l=1}^p \by_l \cdot \bzeta_l \Big)
d\bx_1\cdots d\bx_p d\by_1\cdots d\by_p  .
\end{eqnarray}
It satisfies
\begin{eqnarray}
\label{eq:fouriermom}
&&\frac{\partial \hat{\mu}_{2p}}{\partial z} = - \frac{i}{2k_0}  \Big( \sum_{j=1}^p |\bxi_j|^2 
- \sum_{l=1}^p |\bzeta_l |^2 \Big) \hat{\mu}_{2p} +  \frac{k_0^2}{4}  \hat{\cal U}_{2p} \hat{\mu}_{2p}  , \\
&& \hat{\mu}_{2p}(z=0) = \prod_{j=1}^p \hat{f}(\bxi_j) 
 \prod_{l=1}^p \overline{ \hat{f}(\bzeta_l)}  ,
\end{eqnarray}
where $\hat{f}$ is the Fourier transform (\ref{def:fourierf}) of the initial field
and the operator $\hat{\cal U}_{2p}$ is defined by
\begin{eqnarray}
\nonumber
&&\hat{\cal U}_{2p} \hat{\mu}_{2p} = \frac{1}{(2\pi)^2}
\int \hat{C}(\bk) \Big[
\sum_{j,l=1}^p   \hat{\mu}_{2p}(\bxi_j-\bk,\bzeta_l-\bk)\\
&&-\sum_{1 \leq j<j'\leq p} \hat{\mu}_{2p}(\bxi_j-\bk,\bxi_{j'}+\bk) 
 -\sum_{1 \leq l<l'\leq p} \hat{\mu}_{2p}(\bzeta_l-\bk,\bzeta_{l'}+\bk) 
 - p \hat{\mu}_{2p} \Big]
 d\bk , \hspace*{0.25in}
\end{eqnarray} 
where we only write the arguments that are shifted.
It turns out that the equation for the Fourier transform $\hat{\mu}_{2p}$ is easier to solve 
than the one for $\mu_{2p}$ as we will see below.

\section{The Second-Order Moments}
\label{sec:second}%
The second-order moments play an important role, as they give the mean intensity profile 
and the correlation radius of the transmitted beam
\cite{feizulin,garniers2},
they can be used to analyze time reversal experiments  \cite{blomgren,PRS04} and 
wave imaging problems \cite{dehoopsolna09,dehoopgarnier13}, and we will need them to compute the scintillation index
of the transmitted beam and the variance of the Wigner transform.
We describe them in detail in this section.

\subsection{The Mean Wigner Transform}
\label{sec:Wtransform}
The second-order moments (\ref{def:mu1}) satisfy the system $L^2(\RR^2 \times  \RR^2)$:
\begin{equation}
\label{eq:mom11}
\frac{\partial {\mu}_2 }{\partial z} = \frac{i}{2k_0}  \big(  \Delta_{\bx}
-  \Delta_{\by} \big) {\mu}_2  + \frac{k_0^2}{4} \big( C(\bx-\by)-C({\bf 0})  \big) {\mu}_2    ,
\end{equation}
starting from $ {\mu}_2 (z=0,\bx,\by) = f(\bx)   \overline{f(\by)} $.
The second-order moment is related to the mean Wigner transform defined by 
\begin{equation}
{W}_{\rm m} ( z,\br,\bxi) := 
\int 
\exp \big( - i  \bxi \cdot \bq  \big)
\EE \left[ {u}\big(z,  \br+\frac{\bq}{2} \big)
 \overline{u} \big( z,   \br-\frac{\bq}{2}\big) \right]  d \bq  ,
\end{equation}
that is the angularly-resolved  mean wave energy density.
The mean Wigner transform $W_{\rm m}(z,\cdot)$ is in $L^2(\RR^2 \times \RR^2)$ and 
$\| W_{\rm m}(z,\cdot) \|_{L^2 (\RR^2\times \RR^2)} \leq 2 \pi \| f \|_{L^2(\RR^2)}^2$.
It is also bounded by $\| W_{\rm m}(z,\cdot) \|_{L^\infty (\RR^2\times \RR^2)} \leq 4 \| f \|_{L^2(\RR^2)}^2$.
Using (\ref{eq:mom11}) we find that it satisfies the closed system
\begin{equation}
\label{systemWT2rapid}
\frac{\partial {W}_{\rm m} }{\partial z} + \frac{1}{k_0}
{ \bxi}\cdot \nabla_{\br} W_{\rm m}
=\frac{k_0^2}{4 (2\pi)^2}
\int \hat{C}( \bk) \Big[ 
W_{\rm m} (   \bxi - \bk   ) 
 - W_{\rm m}(  \bxi ) 
\Big]  d \bk ,
\end{equation}
starting from ${W}_{\rm m}( z= 0 ,\br,\bxi) =W_0(\br,\bxi)$, which is the Wigner transform of the initial field~$f$:
$$
W_0(\br,\bxi) := \int \exp \big(- i\bxi \cdot \bq \big) f \big( \br + \frac{\bq}{2}\big) \overline{f} \big( \br - \frac{\bq}{2}\big)  d\bq.
$$
Eq.~(\ref{systemWT2rapid}) has the form of a radiative 
transport equation for the wave energy density $W_{\rm m}$.  In this context 
$k_0^2{C}({\bf 0})/4$ is the total
scattering  cross-section and $k_0^2 \hat{C}(\cdot)/[4(2\pi)^2]$  is
the  differential scattering cross-section that gives the mode conversion rate.

By taking a Fourier transform in $\br$ and an inverse Fourier transform in $\bxi$  of Eq.~(\ref{systemWT2rapid}):
$$
\hat{W}_{\rm m} ( z,\bzeta,\bq) = \frac{1}{(2\pi)^2}  
\iint 
\exp \big( - i  \bzeta \cdot \br + i \bxi \cdot \bq  \big)
{W}_{\rm m} ( z,\br,\bxi) d \bxi d\br  ,
$$
we obtain a transport equation:
$$
\frac{\partial \hat{W}_{\rm m} }{\partial z} + \frac{1}{k_0}
{ \bzeta}\cdot \nabla_{\bq} \hat{W}_{\rm m}
=\frac{k_0^2}{4}
\big[ C(\bq)-C({\bf 0})\big] \hat{W}_{\rm m},
$$
that can be solved and we find
 the following integral representation for $W_{\rm m}$:
\begin{eqnarray}
\nonumber
W_{\rm m}(z,\br,\bxi)&=&  \frac{1}{(2\pi)^{2}} \iint
\exp \Big( i  \bzeta \cdot \big( \br -\bxi \frac{z}{k_0} \big) - i  \bxi  \cdot \bq \Big)\hat{W}_0\big(\bzeta,\bq  \big) 
\\
&& \times  \exp \Big( \frac{k_0^2}{4} \int_0^z {C} \big( \bq + \bzeta \frac{z'}{k_0} \big)-{C}(
 {\bf 0}) d z' \Big)
 d\bzeta d \bq ,
\label{solWT1b}
\end{eqnarray}
where $\hat{W}_0$ is defined in terms of  the initial field 
$f$ as:
\begin{equation}
\label{eq:part}
\hat{W}_0(\bzeta,\bq)  = \int 
\exp \big( - i  \bzeta \cdot \br  \big)
  f\big(   \br+\frac{\bq}{2} \big)
 \overline{f} \big(     \br-\frac{\bq}{2}\big)    d \br  .
\end{equation}

\subsection{The Mutual Coherence Function}
The mutual coherence function is defined by:
\begin{equation}
\Gamma^{(2)}(z,\br,\bq) := \mu_2\Big(z, \br+\frac{\bq}{2} , \br-\frac{\bq}{2} \Big) =  \EE \left[ {u}\big( z, \br+\frac{\bq}{2} \big) 
\overline{u} \big(z,\br-\frac{\bq}{2} \big) \right]  ,
\end{equation}
where $\br$ is the mid-point and $\bq$ is the offset.
It can be computed  by taking the inverse Fourier transform of the expression (\ref{solWT1b}):
\begin{eqnarray}
\nonumber
\Gamma^{(2)}(z,\br,\bq) &=& \frac{1}{(2\pi)^2}
\int \exp \big( i \bxi \cdot \bq \big) W_{\rm m}(z,\br,\bxi) d\bxi \\
\nonumber
&=&
 \frac{1}{(2\pi)^{2}} \int
\exp \big( i  \bzeta \cdot \br  \big) 
\hat{W}_0\big(\bzeta,\bq -\bzeta\frac{z}{k_0} \big) \\
&&\times  \exp \Big( \frac{k_0^2}{4} \int_0^z {C} \big( \bq - \bzeta \frac{z'}{k_0} \big)-{C}(
 {\bf 0}) d z' \Big)
 d\bzeta   .
 \label{expres:gamma2}
\end{eqnarray}
Let us examine the particular initial condition (\ref{inigaus})
which corresponds to a Gaussian-beam wave.
If the initial condition is the Gaussian profile (\ref{inigaus}),
then we have
\begin{equation}
\hat{W}_0(\bzeta,\bq) = \pi r_0^2 \exp\Big( -\frac{r_0^2 |\bzeta|^2}{4}  - \frac{|\bq|^2}{4 r_0^2 }\Big)  ,
\end{equation} 
and we find from (\ref{expres:gamma2}) that the mutual coherence function has the form
\begin{eqnarray}
\nonumber
\Gamma^{(2)}(z,\br,\bq) &=&
\frac{r_0^2 }{4 \pi }  \int
\exp \Big(-\frac{1}{4 r_0^2} \big|  \bq - \bzeta  \frac{z}{k_0}\big|^2 -
\frac{r_0^2   |\bzeta|^2 }{4} + i \bzeta \cdot \br \Big)\\
&& \times \exp \Big( \frac{k_0^2}{4} \int_0^z
 {C}\big( \bq - \bzeta \frac{z'}{k_0}  \big)
- { C}( {\bf 0} ) dz' \Big)
 d \bzeta .
\label{eq:transb12}
\end{eqnarray}

\section{The Fourth-Order Moments} 
\label{sec:fourth}%
We consider the fourth-order moment ${\mu}_4$ of the field,
which is the main quantity of interest in this paper,
and parameterize  the four points 
$\bx_1,\bx_2,\by_1,\by_2$ in (\ref{def:generalmoment}) in the special way:
\begin{eqnarray}
\label{eq:reliexr1}
\bx_1 = \frac{\br_1+\br_2+\bq_1+\bq_2}{2}, \quad \quad 
\by_1 = \frac{\br_1+\br_2-\bq_1-\bq_2}{2}, \\
\bx_2 = \frac{\br_1-\br_2+\bq_1-\bq_2}{2}, \quad \quad 
\by_2 = \frac{\br_1-\br_2-\bq_1+\bq_2}{2}.
\label{eq:reliexr2}
\end{eqnarray}
In particular $\br_1/2$ is the barycenter of the four points $\bx_1,\bx_2,\by_1,\by_2$:
\begin{eqnarray*}
\br_1 = \frac{\bx_1+\bx_2+\by_1+\by_2}{2} , \quad \quad 
\bq_1 = \frac{\bx_1+\bx_2-\by_1-\by_2}{2}, \\
\br_2 = \frac{\bx_1-\bx_2+\by_1-\by_2}{2}, \quad \quad 
\bq_2 = \frac{\bx_1-\bx_2-\by_1+\by_2}{2}.
\end{eqnarray*}

We denote by $\mu$ the fourth-order moment in these new variables:
\begin{equation}
\mu(z,\bq_1,\bq_2,\br_1,\br_2) := 
\mu_4  (z, 
\bx_1  ,
\bx_2 ,
\by_1 ,  
\by_2 
)
\end{equation}
with $\bx_1,\bx_2,\by_1,\by_2$ given by (\ref{eq:reliexr1}-\ref{eq:reliexr2}) in terms of $\bq_1,\bq_2,\br_1,\br_2$.

In the variables $(\bq_1,\bq_2,\br_1,\br_2)$ the function ${\mu}(z,\cdot)$ satisfies the system in
$L^2(\RR^2 \times\RR^2 \times \RR^2 \times \RR^2)$:
\begin{equation}
\label{eq:M20}
\frac{\partial {\mu}}{\partial z} = \frac{i}{k_0} \big( \nabla_{\br_1}\cdot \nabla_{\bq_1}
+
 \nabla_{\br_2}\cdot \nabla_{\bq_2}
\big)  {\mu} + \frac{k_0^2}{4} {U}(\bq_1,\bq_2,\br_1,\br_2) {\mu}   ,
\end{equation}
with the generalized potential
\begin{eqnarray}
\nonumber
{U}(\bq_1,\bq_2,\br_1,\br_2) &:=& 
{C}(\bq_2+\bq_1) 
+
{C}(\bq_2-\bq_1) 
+
{C}(\br_2+\bq_1) 
+
{C}(\br_2-\bq_1) \\
&&- 
{C}( \bq_2+\br_2) - {C}( \bq_2-\br_2) - 2 {C}({\bf 0})  .
\end{eqnarray}
Note in particular that the generalized potential does not depend on the barycenter $\br_1$,
and this comes from the fact that the medium
is statistically homogeneous.
If we assume that the source spatial profile is the Gaussian (\ref{inigaus}) with radius $r_0$,
then the initial condition for Eq.~(\ref{eq:M20}) is
$$
{\mu}(z=0,\bq_1,\bq_2,\br_1,\br_2) = \exp \Big( - \frac{ |\bq_1|^2+ |\bq_2|^2 + |\br_1|^2+ |\br_2|^2}{2r_0^2} \Big).
$$

The Fourier transform (in $\bq_1$, $\bq_2$, $\br_1$, and $\br_2$) of the fourth-order moment
is defined by:
\begin{eqnarray}
\nonumber
\hat{\mu}(z,\bxi_1,\bxi_2,\bzeta_1,\bzeta_2) 
&=& 
\iint {\mu}(z,\bq_1,\bq_2,\br_1,\br_2)  \\
&& \hspace*{-0.8in}
\times
\exp  \big(- i\bq_1 \cdot \bxi_1- i\bq_2 \cdot \bxi_2- i\br_1\cdot \bzeta_1- i\br_2\cdot \bzeta_2\big) d\bq_1d\bq_2 
d\br_1d\br_2 \label{eq:fourier} 
. \hspace*{0.3in} 
\end{eqnarray}
It satisfies
\begin{eqnarray}
\nonumber
&& 
\frac{\partial \hat{\mu}}{\partial z} + \frac{i}{k_0} \big( \bxi_1\cdot \bzeta_1+   \bxi_2\cdot \bzeta_2\big) \hat{\mu}
=
\frac{k_0^2}{4 (2\pi)^2} 
\int \hat{C}(\bk) \bigg[  
 \hat{\mu} (  \bxi_1-\bk, \bxi_2-\bk, \bzeta_1, \bzeta_2)  \\
\nonumber
&& \quad  + 
 \hat{\mu} (  \bxi_1-\bk,\bxi_2,  \bzeta_1, \bzeta_2-\bk)    
 +
 \hat{\mu} (  \bxi_1+\bk, \bxi_2-\bk, \bzeta_1, \bzeta_2)   \\
\nonumber
&&  \quad 
+ 
 \hat{\mu} (  \bxi_1+\bk,\bxi_2, \bzeta_1,  \bzeta_2-\bk)    -
2 \hat{\mu}(\bxi_1,\bxi_2, \bzeta_1, \bzeta_2) \\
&&  \quad 
-
 \hat{\mu} (  \bxi_1,\bxi_2-\bk, \bzeta_1, \bzeta_2-\bk)  
- \hat{\mu} (  \bxi_1,\bxi_2+\bk,  \bzeta_1, \bzeta_2-\bk) 
\bigg]d \bk ,
\label{eq:fouriermom0}
\end{eqnarray}
starting from $\hat{\mu}(z=0,\bxi_1,\bxi_2,\bzeta_1,\bzeta_2) = (2\pi r_0^2)^4 \exp ( - r_0^2 ( |\bxi_1|^2+ |\bxi_2|^2 + |\bzeta_1|^2+ |\bzeta_2|^2)/ 2)$.
The modified function $\tilde{\mu}$ defined by 
\begin{equation}
\label{def:MtildeMeps}
\tilde{\mu} (z,\bxi_1,\bxi_2 , \bzeta_1,\bzeta_2 \big)   =  \hat{\mu} \big(z,\bxi_1,\bxi_2, \bzeta_1,\bzeta_2 \big) 
 \exp \Big( \frac{i z}{k_0} (\bxi_2 \cdot \bzeta_2  +   \bxi_1 \cdot \bzeta_1) \Big)
\end{equation}
therefore satisfies:
\begin{equation}
\label{eq:tildemu}
\frac{\partial \tilde{\mu}}{\partial z}=
{\cal L}_z \tilde{\mu}(z)
\end{equation}
starting from 
\begin{equation}
\label{eq:icmu}
\tilde{\mu} (z=0,\bxi_1,\bxi_2,\bzeta_1,\bzeta_2) = (2\pi r_0^2)^4 \exp \Big( - r_0^2 \frac{ |\bxi_1|^2+ |\bxi_2|^2 + |\bzeta_1|^2+ |\bzeta_2|^2}{ 2} \Big)
,
\end{equation}
where the operator ${\cal L}_z$ from $ L^2(\RR^2 \times\RR^2 \times\RR^2 \times\RR^2 )$ into itself is defined by: 
\begin{eqnarray}
{\cal L}_z \psi(\bxi_1,\bxi_2,\bzeta_1,\bzeta_2)  &:=&
\frac{k_0^2}{4 (2\pi)^2} 
\int \hat{C}(\bk) \bigg[  - 2 \psi (  \bxi_1 ,\bxi_2, \bzeta_1,\bzeta_2)   \\
\nonumber
&&+
\psi (  \bxi_1-\bk,\bxi_2-\bk,  \bzeta_1,\bzeta_2) 
e^{i\frac{z}{k_0} \bk \cdot  (\bzeta_2 + \bzeta_1)} \\
&& \nonumber
+ 
\psi  (  \bxi_1-\bk,\bxi_2,  \bzeta_1,\bzeta_2-\bk) 
e^{i\frac{z}{k_0} \bk \cdot ( \bxi_2 +  \bzeta_1)}\\
\nonumber
&& +\psi  (  \bxi_1+\bk,\bxi_2-\bk,  \bzeta_1,\bzeta_2) 
e^{i\frac{z}{k_0} \bk \cdot (\bzeta_2 -   \bzeta_1)}\\
&& \nonumber
+ 
\psi (  \bxi_1+\bk,\bxi_2,  \bzeta_1,\bzeta_2-\bk) 
e^{i\frac{z}{k_0} \bk \cdot (\bxi_2 -  \bzeta_1)}\\
\nonumber
&&
-
\psi  (  \bxi_1,\bxi_2-\bk, \bzeta_1, \bzeta_2-\bk) 
 e^{i\frac{z}{ k_0} (  \bk \cdot (\bzeta_2+\bxi_2)-|\bk|^2 )} \\
 &&
- \psi ( \bxi_1,\bxi_2-\bk,  \bzeta_1,\bzeta_2+\bk) 
e^{i\frac{z}{ k_0} ( \bk \cdot (\bzeta_2-\bxi_2)+|\bk|^2)}
\bigg] d \bk . \hspace*{0.2in}
\label{def:calL}
\end{eqnarray}
$\tilde{\mu}(z,\cdot)$ is in $L^2(\RR^2 \times \RR^2 \times \RR^2 \times \RR^2 )$.
The following lemma applied with $p=2$ shows that it is the unique solution to (\ref{eq:tildemu}) with the initial
condition (\ref{eq:icmu}).
\begin{lemma}
\label{lem:normLz}%
Let $p \in [1,\infty]$.
For any $z$, the operator ${\cal L}_z$ is bounded from $L^p(\RR^2 \times \RR^2 \times \RR^2 \times \RR^2 )$ into itself and 
$
\| {\cal L}_z \|_{L^p \to L^p} \leq 2k_0^2 C({\bf 0)} 
$
uniformly in $z$.
\end{lemma}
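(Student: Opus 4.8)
The plan is to exploit two structural features of the operator ${\cal L}_z$ in (\ref{def:calL}). The first is analytic: by hypothesis (\ref{hyp:h}) the power spectral density $\hat C$ is nonnegative, continuous and integrable, and by Fourier inversion $\int \hat C(\bk)\, d\bk = (2\pi)^2 C({\bf 0})$. The second is algebraic: each of the six nondiagonal terms inside the bracket of (\ref{def:calL}) is, for every fixed $\bk$, an isometry of $L^p(\RR^2\times\RR^2\times\RR^2\times\RR^2)$. Writing the bracket as $\Phi_\bk(\psi)$, so that ${\cal L}_z\psi = \frac{k_0^2}{4(2\pi)^2}\int \hat C(\bk)\,\Phi_\bk(\psi)\, d\bk$, the whole estimate will reduce to counting these terms and integrating against the positive weight $\hat C$.

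For the isometry claim I would observe that each nondiagonal term is the composition of a translation of the argument $(\bxi_1,\bxi_2,\bzeta_1,\bzeta_2)$ by a vector depending linearly on $\bk$, followed by multiplication by a phase factor of unit modulus (for instance $e^{i(z/k_0)\bk\cdot(\bzeta_2+\bzeta_1)}$ for the second term, $e^{i(z/k_0)(\bk\cdot(\bzeta_2+\bxi_2)-|\bk|^2)}$ for the sixth). Translations are isometries of $L^p$ for every $p\in[1,\infty]$, and pointwise multiplication by a function of modulus one leaves $|\psi|$ unchanged and hence preserves the $L^p$ norm; therefore each of the six nondiagonal terms has $L^p$-norm equal to $\|\psi\|_{L^p}$, while the diagonal term $-2\psi$ contributes at most $2\|\psi\|_{L^p}$. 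Summing the seven contributions by the triangle inequality gives, for each fixed $\bk$,
\[
\|\Phi_\bk(\psi)\|_{L^p} \le 8\,\|\psi\|_{L^p} .
\]
Note that this bound is independent of $z$, since the phase factors never enter the modulus; this is where the uniformity in $z$ comes from.

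Next I would move the $L^p$ norm inside the $\bk$-integral by Minkowski's integral inequality (for $p=\infty$ the same inequality $\|\int\cdots\,d\bk\|_\infty\le\int\|\cdots\|_\infty\,d\bk$ holds, so the endpoint needs no separate treatment), using $\hat C\ge 0$ so that the weight stays positive:
\[
\|{\cal L}_z \psi\|_{L^p}
\le \frac{k_0^2}{4(2\pi)^2}\int \hat C(\bk)\,\|\Phi_\bk(\psi)\|_{L^p}\, d\bk
\le \frac{2k_0^2}{(2\pi)^2}\,\|\psi\|_{L^p}\int \hat C(\bk)\, d\bk = 2k_0^2 C({\bf 0})\,\|\psi\|_{L^p} .
\]
The same majorization $\int \hat C(\bk)\|\Phi_\bk(\psi)\|_{L^p}\,d\bk<\infty$ also shows that the Bochner integral defining ${\cal L}_z\psi$ converges in $L^p$, so ${\cal L}_z$ genuinely maps $L^p$ into itself and the stated operator-norm bound follows.

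I do not expect a real obstacle. The only points requiring care are verifying that every one of the six phase factors in (\ref{def:calL}) has modulus exactly one (so that multiplication is an $L^p$ isometry) and that the corresponding argument shifts are honest translations; both are immediate by inspection of (\ref{def:calL}). The two analytic ingredients, the nonnegativity and integrability of $\hat C$ and the identity $\int\hat C\,d\bk=(2\pi)^2C({\bf 0})$, are already furnished by hypothesis (\ref{hyp:h}) and the discussion following it, so the argument is uniform in both $z$ and $p\in[1,\infty]$.
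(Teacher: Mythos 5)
Your proof is correct and follows essentially the same route as the paper's: the paper's one-line argument likewise invokes Minkowski's integral inequality together with the nonnegativity of $\hat C$, implicitly using that the six shifted-and-phased terms are $L^p$-isometries and the diagonal term contributes a factor $2$, so that the constant is $\frac{k_0^2}{4(2\pi)^2}\cdot 8\cdot\int\hat C(\bk)\,d\bk = 2k_0^2C({\bf 0})$. You have simply made explicit the details the paper leaves to the reader.
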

\begin{proof}
Since $\hat{C}$ is non-negative we have for any $\psi  \in L^p(\RR^2 \times \RR^2 \times \RR^2 \times \RR^2 )$
by the Minkowski's  integral  inequality:
\begin{eqnarray*}
\hspace*{0.7in}
 \|   {\cal L}_z \psi   \|_{L^p}    \leq    \frac{2 k_0^2}{(2\pi)^2}  
   \int  d\bk  \hat{C}(\bk)  \| \psi \|_{L^p} 
=      2 k_0^2 C({\bf 0})   \|  \psi    \|_{L^p} .
\hspace*{0.7in}
\Box 
\end{eqnarray*}
\end{proof}
Note that initial condition (\ref{eq:icmu}) is not only in $L^2(\RR^2 \times \RR^2 \times \RR^2 \times \RR^2 )$
but also  in $L^1(\RR^2 \times \RR^2 \times \RR^2 \times \RR^2 )$.
Since ${\cal L}_z$ is bounded as a linear operator from $L^1$ to $L^1$, this shows that
$\tilde{\mu}(z,\cdot)$ and therefore $\hat{\mu}(z,\cdot)$ is in $L^1$.  
Since $\mu(z,\cdot)$ is the inverse Fourier transform of $\hat{\mu}(z,\cdot)$, this shows that 
$ \mu(z,\cdot)$ is continuous and bounded.

The resolution of the equation (\ref{eq:tildemu}) would give the expression of the fourth-order moment.
However, in contrast to the second-order moment, we cannot solve this equation and find a closed-form
expression of the fourth-order moment in the general case.
Therefore we address in the next sections a particular regime in which explicit expressions can be obtained.

\section{The Scintillation Regime and Main Result}
\label{sec:regime}%
In this paper we  address a regime which can be considered
as a particular case of the paraxial white-noise regime:  the scintillation regime. 
In \cite{garniers3} we  addressed this regime in the limit case of an infinite beam radius, that is, a plane wave. 
It turns out that the equation that characterizes the fourth-order moments can then be reduced to an equation in
$\RR^{2}\times\RR^{2}$ that can be solved.
There is no such simplification with an initial condition in the form of a beam.
Here we address the propagation of a beam with finite radius $r_0$  
and the equation to be studied is in $\RR^{2}\times\RR^{2}\times\RR^{2}\times\RR^{2} $.

In Appendix \ref{app:a} we explain the conditions for  validity of the scintillation regime
in the context of the wave equation (\ref{eq:helm}).
More directly, if we start from the  It\^o-Schr\"odinger equation (\ref{eq:model}),
then the scintillation regime is valid if the (transverse) correlation length of the Brownian field 
is smaller than the beam radius,
the standard deviation of the Brownian field is small, and the propagation distance is large.
If the correlation length is our reference length, this means that 
in this regime the covariance function ${C}^\eps$ is of the form:
\begin{equation}
\label{sca:sci}
C^\eps(\bx)= \eps C (\bx) , 
\end{equation}
the beam radius is of order $1/\eps$, i.e.  the initial source is of the form 
\begin{equation}
\label{def:feps}
f^\eps(\bx) = \exp \Big( - \frac{\eps^2 |\bx|^2}{2r_0^2} \Big),
\end{equation}
and the propagation distance is of order of $1/\eps$.
Here $\eps$ is a small dimensionless parameter and we will study the limit $\eps \to 0$.
Note that for simplicity we assume that the initial beam profile is Gaussian, which allows us to get 
closed-form expressions,  but the results could be extended to more general beam profiles.

Let us denote the rescaled function
\begin{equation}
\label{eq:renormhatM2}
\tilde{\mu}^\eps (z ,\bxi_1,\bxi_2,\bzeta_1,\bzeta_2) := \tilde{\mu} \Big(\frac{z}{\eps} ,\bxi_1,\bxi_2,\bzeta_1,\bzeta_2 \Big) .
\end{equation}

In the scintillation regime the rescaled function $\tilde{\mu}^\eps$ 
satisfies the equation with fast phases
\begin{equation}
\label{eq:tildemueps}
 \frac{\partial \tilde{\mu}^\eps}{\partial z}  ={\cal L}^\eps_z \tilde{\mu}^\eps, 
 \end{equation}
 where 
\begin{eqnarray}
\nonumber{\cal L}^\eps_z  \psi  (  \bxi_1 ,\bxi_2, \bzeta_1,\bzeta_2)&:=&
\frac{k_0^2}{4 (2\pi)^2} 
\int \hat{C}(\bk) \bigg[  - 2\psi (  \bxi_1 ,\bxi_2, \bzeta_1,\bzeta_2)   \\
\nonumber
&&+
\psi   (  \bxi_1-\bk,\bxi_2-\bk,  \bzeta_1,\bzeta_2) 
e^{i\frac{z}{\eps k_0} \bk \cdot  (\bzeta_2 + \bzeta_1)} \\
&& \nonumber
+ 
\psi  (  \bxi_1-\bk,\bxi_2,  \bzeta_1,\bzeta_2-\bk) 
e^{i\frac{z}{\eps k_0} \bk \cdot ( \bxi_2 +  \bzeta_1)}\\
\nonumber
&& + \psi  (  \bxi_1+\bk,\bxi_2-\bk,  \bzeta_1,\bzeta_2) 
e^{i\frac{z}{\eps k_0} \bk \cdot (\bzeta_2 -   \bzeta_1)}\\
&& \nonumber
+ 
\psi   (  \bxi_1+\bk,\bxi_2,  \bzeta_1,\bzeta_2-\bk) 
e^{i\frac{z}{\eps k_0} \bk \cdot (\bxi_2 -  \bzeta_1)}\\
\nonumber
&&
-
\psi (  \bxi_1,\bxi_2-\bk, \bzeta_1, \bzeta_2-\bk) 
 e^{i\frac{z}{\eps k_0} (  \bk \cdot (\bzeta_2+\bxi_2)-|\bk|^2 )} \\
 &&
- \psi  ( \bxi_1,\bxi_2-\bk,  \bzeta_1,\bzeta_2+\bk) 
e^{i\frac{z}{\eps k_0} ( \bk \cdot (\bzeta_2-\bxi_2)+|\bk|^2)}
\bigg] d \bk , \hspace*{0.2in}
\label{eq:tildeNeps}
\end{eqnarray}
and the initial condition 
(corresponding to (\ref{def:feps})) is
\begin{equation}
\label{eq:initialtildeM2eps}
\tilde{\mu}^\eps(z=0,\bxi_1,\bxi_2, \bzeta_1, \bzeta_2 ) = (2\pi)^8 \phi^\eps ( \bxi_1 )
\phi^\eps ( \bxi_2 )
\phi^\eps ( \bzeta_1 )\phi^\eps ( \bzeta_2 ) ,
\end{equation}
where we have denoted
\begin{equation}
\phi^\eps(\bxi) := \frac{r_0^2}{2\pi \eps^2} \exp \Big( -\frac{r_0^2}{2 \eps^2} |\bxi|^2\Big) .
\end{equation}
Note that $\phi^\eps$ belongs to $L^1$ and has a $L^1$-norm equal to one.
The asymptotic behavior as $\eps \to 0$ of the moments is therefore determined
by the solutions of partial differential equations with rapid phase terms.
A key limit theorem will allow us to get a representation of the fourth-order moments 
in the asymptotic regime $\eps \to 0$.
We will see that, although the initial condition (\ref{eq:initialtildeM2eps}) is concentrated in the four variables around 
an $\eps$-neighborhood of ${\bf 0}$, the evolution equation will spread it, except in the $\bzeta_1$-variable
which is a frozen parameter in the evolution equation (\ref{eq:tildemueps}). This is related to the fact that 
the generalized potential does not depend on $\br_1$ as the medium
is statistically homogeneous.
It  corresponds to the fourth-order moment not varying rapidly 
with respect to the spatial center coordinate $\br_1$ while in the other 
barycentric coordinates we have in general rapid variations   
induced by the medium fluctuations on this  scale.

Our goal is now to study the asymptotic behavior of $\tilde{\mu}^\eps$ as $\eps \to 0$.
We have the following result, which shows that $\tilde{\mu}^\eps$ exhibits a multi-scale behavior
as $\eps \to 0$, with some components evolving at the scale $\eps$ and 
some components evolving at the order one scale.
\begin{proposition}
\label{prop:sci1}%
Under Hypothesis (\ref{hyp:h}),
the function $\tilde{\mu}^\eps(z,\bxi_1,\bxi_2, \bzeta_1,\bzeta_2 ) $ can be expanded as
\begin{eqnarray}
\nonumber
&&
 \tilde{\mu}^\eps(z,\bxi_1,\bxi_2,  \bzeta_1,\bzeta_2 )  =
K(z)
\phi^\eps ( \bxi_1 )
\phi^\eps ( \bxi_2 )
\phi^\eps ( \bzeta_1 )
\phi^\eps ( \bzeta_2 ) \\
\nonumber
&& 
\quad
+
K(z) 
\phi^\eps \big( \frac{\bxi_1-\bxi_2}{\sqrt{2}}\big)
\phi^\eps ( \bzeta_1 )
\phi^\eps ( \bzeta_2 )
A\big(z, \frac{\bxi_2+\bxi_1}{2} ,\frac{\bzeta_2 + \bzeta_1}{\eps}\big) \\
\nonumber
&&  
\quad
+
K(z) 
\phi^\eps \big( \frac{\bxi_1+\bxi_2}{\sqrt{2}}\big)
\phi^\eps ( \bzeta_1 )
\phi^\eps ( \bzeta_2 )
A \big(z, \frac{\bxi_2-\bxi_1}{2} ,\frac{\bzeta_2- \bzeta_1}{\eps} \big) \\
\nonumber
&&  
\quad
+
K(z) 
\phi^\eps \big( \frac{\bxi_1-\bzeta_2}{\sqrt{2}}\big)
\phi^\eps ( \bzeta_1 ) \phi^\eps ( \bxi_2 )
A\big(z, \frac{\bzeta_2+\bxi_1}{2} ,\frac{\bxi_2+ \bzeta_1}{\eps}  \big) \\
\nonumber
&& 
\quad
+
K(z) 
\phi^\eps \big( \frac{\bxi_1+\bzeta_2}{\sqrt{2}}\big)
\phi^\eps ( \bzeta_1 ) \phi^\eps ( \bxi_2 )
A\big(z, \frac{\bzeta_2-\bxi_1}{2} ,\frac{\bxi_2- \bzeta_1}{\eps}  \big) \\
\nonumber
&& 
\quad
+K(z)\phi^\eps ( \bzeta_1 )\phi^\eps (\bzeta_2) 
A \big( z, \frac{\bxi_2+\bxi_1}{2},   \frac{\bzeta_2+ \bzeta_1}{\eps}  \big)
A \big( z, \frac{\bxi_2-\bxi_1}{2},   \frac{\bzeta_2- \bzeta_1}{\eps}  \big) \\
\nonumber
&&
\quad
 +K(z)  \phi^\eps ( \bzeta_1 ) \phi^\eps (\bxi_2)
A \big( z, \frac{\bzeta_2+\bxi_1}{2},  \frac{\bxi_2+ \bzeta_1}{\eps}   \big)
A \big( z, \frac{\bzeta_2-\bxi_1}{2},  \frac{\bxi_2- \bzeta_1}{\eps}  \big)
\\
&& 
\quad
 + R^\eps  (z ,\bxi_1,\bxi_2 ,  \bzeta_1 ,\bzeta_2 )   ,
\label{eq:propsci11}
\end{eqnarray}
where the functions $K$ and $A$  are defined by
\begin{eqnarray}
\label{def:K}
K(z) &:=& (2\pi)^8 \exp\Big(- \frac{k_0^2}{2} C({\bf 0}) z\Big) , \\
\nonumber
A(z,\bxi,\bzeta)  &:=&  \frac{1}{2(2\pi)^2}
 \int  \Big[  \exp \Big( \frac{k_0^2}{4} \int_0^z C\big( \bx + \frac{\bzeta}{k_0} z' \big) dz' \Big) -1\Big]\\
 && \times
   \exp \big( -i \bxi\cdot \bx  \big)
 d\bx  ,   
\label{def:A}
\end{eqnarray}
and the function $R^\eps $ satisfies
$$
\sup_{z \in [0,Z]} \| R^\eps (z,\cdot,\cdot,\cdot,\cdot) \|_{L^1(\RR^2\times \RR^2\times \RR^2\times \RR^2)} 
\stackrel{\eps \to 0}{\longrightarrow}  0  ,
$$
for any $Z>0$.
\end{proposition}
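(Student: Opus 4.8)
The plan is to avoid solving \eqref{eq:tildemueps} directly and instead to verify the right-hand side of \eqref{eq:propsci11} as an ansatz through an $L^1$ stability estimate. Write $\tilde{\mu}^\eps_{\mathrm{app}}$ for the sum of the seven explicit terms in \eqref{eq:propsci11}, so that $R^\eps=\tilde{\mu}^\eps-\tilde{\mu}^\eps_{\mathrm{app}}$, and set $\rho^\eps:=\partial_z\tilde{\mu}^\eps_{\mathrm{app}}-{\cal L}^\eps_z\tilde{\mu}^\eps_{\mathrm{app}}$. Since $A(0,\cdot,\cdot)=0$ by \eqref{def:A} and $K(0)=(2\pi)^8$, the ansatz reproduces the initial data \eqref{eq:initialtildeM2eps} exactly, whence $R^\eps(z{=}0)=0$; moreover $R^\eps$ solves $\partial_z R^\eps={\cal L}^\eps_z R^\eps-\rho^\eps$. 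The operator ${\cal L}^\eps_z$ obeys the same bound as in Lemma~\ref{lem:normLz} (the proof used only $\hat C\ge0$ and $|e^{i\theta}|=1$, both unaffected by the fast phases), so Gronwall's inequality in $L^1$ gives
\[
\sup_{z\in[0,Z]}\|R^\eps(z,\cdot)\|_{L^1}\le e^{2k_0^2C({\bf 0})Z}\int_0^Z\|\rho^\eps(s,\cdot)\|_{L^1}\,ds .
\]
Everything then reduces to the single estimate $\sup_{z\in[0,Z]}\|\rho^\eps(z,\cdot)\|_{L^1}\to0$ as $\eps\to0$.

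To see why the particular ansatz makes $\rho^\eps$ small, I would first record the equation solved by $A$. Differentiating \eqref{def:A} in $z$ and expanding $C$ through $\hat C$ yields
\[
\partial_z A(z,\bxi,\bzeta)=\frac{k_0^2}{4(2\pi)^2}\int\hat C(\bk)\,e^{i\frac{z}{k_0}\bk\cdot\bzeta}\,A(z,\bxi-\bk,\bzeta)\,d\bk+\frac{k_0^2}{8(2\pi)^2}\hat C(\bxi)\,e^{i\frac{z}{k_0}\bxi\cdot\bzeta}.
\]
The arguments $\big(\tfrac{\bxi_2\pm\bxi_1}{2},\tfrac{\bzeta_2\pm\bzeta_1}{\eps}\big)$ and $\big(\tfrac{\bzeta_2\pm\bxi_1}{2},\tfrac{\bxi_2\pm\bzeta_1}{\eps}\big)$ in \eqref{eq:propsci11} are chosen precisely so that the phase $e^{i\frac{z}{k_0}\bk\cdot\bzeta}$ above coincides with one of the four linear fast phases of \eqref{eq:tildeNeps}; since no shift in \eqref{eq:tildeNeps} touches $\bzeta_1$, this variable stays frozen and contributes the universal factor $\phi^\eps(\bzeta_1)$. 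Each factor $A$ is ``born'' from two narrow Gaussians via the identity
\[
\phi^\eps(\bu-\bk)\,\phi^\eps(\bv-\bk)=\frac{r_0^2}{2\pi\eps^2}\,\phi^\eps\Big(\frac{\bu-\bv}{\sqrt2}\Big)\,e^{-\frac{r_0^2}{\eps^2}|\bk-\frac{\bu+\bv}{2}|^2},
\]
whose last factor concentrates at $\bk=\tfrac{\bu+\bv}{2}$ with mass $\tfrac{\pi\eps^2}{r_0^2}$; integrating it against $\hat C(\bk)e^{i\frac{z}{\eps k_0}\bk\cdot\bw}$ produces exactly $\tfrac12\phi^\eps(\tfrac{\bu-\bv}{\sqrt2})\hat C(\tfrac{\bu+\bv}{2})e^{i\ldots}$, i.e.\ the source term of the $A$-equation. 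This is what promotes the bare term to a single-$A$ term and a single-$A$ term to a double-$A$ term, the $-1$ in \eqref{def:A} removing the part already carried by the lower order.

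Substituting the ansatz into $\partial_z-{\cal L}^\eps_z$, the $A$-equation cancels $\partial_z A$ against the resonant (diagonally concentrated, slowly oscillating) part of each ${\cal L}^\eps_z$-term, while the diagonal $-2\psi$ term of \eqref{eq:tildeNeps} produces the decay $K(z)$ of \eqref{def:K}. What survives in $\rho^\eps$ is of three kinds, each $o(1)$ in $L^1$ uniformly on $[0,Z]$: (a) the substitution error $\hat C(\bk)-\hat C(\tfrac{\bu+\bv}{2})$ integrated against the narrow Gaussian, small because $\hat C$ is bounded and continuous under \eqref{hyp:h}; (b) the two lines of \eqref{eq:tildeNeps} carrying the phases $e^{\mp i\frac{z}{\eps k_0}|\bk|^2}$, which never resonate and gain a factor $\eps$ after one integration by parts in $\bk$; and (c) the off-resonant oscillatory cross terms $e^{i\frac{z}{\eps k_0}\bk\cdot\bw}$ with $\bw$ bounded away from $O(\eps)$, together with the would-be triple-$A$ terms generated by letting ${\cal L}^\eps_z$ act once more on the double-$A$ terms, which close into non-stationary phases and hence do not create a new resonance. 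I would bound (c) by quantitative Riemann--Lebesgue / nonstationary-phase estimates made explicit through the Gaussian form of $\phi^\eps$, dominating the $z$-dependence by $\|\phi^\eps\|_{L^1}=1$ and $\hat C\in L^1$ to secure uniformity in $z$.

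The main obstacle is precisely this uniform-in-$z$ $L^1$ decay of the oscillatory contributions (c). In $L^2$ one would simply invoke Plancherel and stationary phase, but the estimate must be carried out in $L^1$ --- the space in which the data and ${\cal L}^\eps_z$ live and in which the Gronwall step closes --- where oscillatory-integral bounds are more delicate. The workable route is to use that a width-$O(\eps)$ Gaussian multiplied by $e^{i\frac{z}{\eps k_0}\bk\cdot\bw}$ has $L^1$-mass that is $O(1)$ only when $\bw=O(\eps)$ and decays otherwise, and to dispatch the $|\bk|^2$ phases by integration by parts. Keeping every such bound uniform over $z\in[0,Z]$ and over the eight-dimensional variable, and verifying that the combinatorics really terminates at the two-$A$ level so that no triple resonance survives, is the genuine technical burden; the remainder of the argument is the bookkeeping that matches the seven structures generated by \eqref{eq:tildeNeps} to the seven explicit terms of \eqref{eq:propsci11}.
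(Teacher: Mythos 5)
Your overall architecture --- take the seven explicit terms as an ansatz $N^\eps$, note that it reproduces the initial data since $A(0,\cdot,\cdot)=0$, write $\partial_z R^\eps={\cal L}^\eps_z R^\eps-\rho^\eps$ with $\rho^\eps=\partial_z N^\eps-{\cal L}^\eps_z N^\eps$, and close with an $L^1$ Gronwall estimate based on the uniform bound of Lemma~\ref{lem:normLz} --- is exactly the paper's (Appendix~B), and your identification of the cancellation mechanism (the transport equation satisfied by $A$, and the Gaussian pairing identity that turns $\int\hat C(\bk)\phi^\eps(\bu-\bk)\phi^\eps(\bv-\bk)\,d\bk$ into $\tfrac12\hat C(\tfrac{\bu+\bv}{2})\phi^\eps(\tfrac{\bu-\bv}{\sqrt2})$, sourcing each new factor of $A$) is also correct. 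The genuine gap is the reduction you announce at the end of your first paragraph: the claim $\sup_{z\in[0,Z]}\|\rho^\eps(z,\cdot)\|_{L^1}\to0$ is false, and even $\int_0^Z\|\rho^\eps(z,\cdot)\|_{L^1}\,dz$ does not tend to zero. Indeed the non-resonant contributions, e.g.
\[
\int \hat C(\bk)\,K(z)\,\phi^\eps(\bzeta_1)\phi^\eps(\bxi_1)\phi^\eps(\bxi_2-\bk)\phi^\eps(\bzeta_2-\bk)\,
e^{i\frac{z}{\eps k_0}\left(\bk\cdot(\bzeta_2+\bxi_2)-|\bk|^2\right)}\,d\bk ,
\]
have $L^1$-norm of order one for \emph{every} fixed $z\in[0,Z]$: the two narrow Gaussians concentrate $\bk$ in an $O(\eps)$-neighborhood of $\tfrac{\bxi_2+\bzeta_2}{2}$, which is precisely the stationary point of the $\bk$-phase, so the phase is essentially constant over the effective support of the $\bk$-integral and taking the modulus removes it, leaving a quantity of size $\sim\tfrac12 K(z)\int\hat C(\bk)\,d\bk$.

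These terms become small only after the $z$-integration is performed \emph{inside} the $L^1$-norm: the quantity one must (and can) estimate is $\sup_{z\in[0,Z]}\big\|\int_0^z\rho^\eps(z',\cdot)\,dz'\big\|_{L^1}$, which still closes the Gronwall argument in its integral form, and the smallness then comes from the rapid oscillation of $e^{iz'|\bk|^2/(\eps k_0)}$ (and of $e^{iz'\bk\cdot\bw/(\eps k_0)}$ for your class (c)) in the $z'$-variable; this is Lemma~\ref{lem:tech1} and Lemma~\ref{lem:lem2R} of the paper. Your proposed substitutes do not work under Hypothesis~(\ref{hyp:h}): integration by parts in $\bk$ against the phase $e^{\mp iz|\bk|^2/(\eps k_0)}$ requires derivatives of $\hat C$ and produces a factor $\eps/(z|\bk|)$ that degenerates at $z=0$ and $\bk={\bf 0}$, while $\hat C$ is only known to be continuous, bounded, nonnegative and integrable; and a Riemann--Lebesgue argument in $\bk$ at fixed $z$ cannot be uniform down to $z=0$, where there is no oscillation at all and the term is $O(1)$. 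Replacing your single pointwise-in-$z$ estimate by the time-integrated source estimate repairs the argument; the rest of your outline then coincides with the paper's proof.
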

It follows from the proof given in Appendix \ref{app:b} that the function $\bxi \to A(z,\bxi,\bzeta)$ belongs to $L^1(\RR^2)$ 
and that its $L^1$-norm $\| A(z,\cdot,\bzeta)\|_{L^1(\RR^2)}$ is bounded uniformly in 
$\bzeta \in \RR^2$ and $z\in [0,Z]$. 
Therefore,  all terms in the right-hand side of (\ref{eq:propsci11}) are in 
$L^1(\RR^2\times \RR^2\times \RR^2\times \RR^2)$ with $L^1$-norms bounded uniformly in $\eps$ and $z \in [0,Z]$.
This proposition is important as many quantities of interest, such as the intensity correlation function, 
the scintillation index, or the variance of the Wigner transform of the wave field
that we will address in the next sections, 
can be expressed as integrals of $\tilde{\mu}^\eps$
against bounded functions. As a consequence we will be able to substitute $\tilde{\mu}^\eps$
with the right-hand side of (\ref{eq:propsci11}) without the remainder~$R^\eps$
in these integrals, and this substitution will allow us to give quantitative results.

\section{The Gaussian Summation Rule for the Centered Fourth-order Moments}
\label{sec:gsr} 
We consider the scaled fourth-order moment:
\begin{equation}
 \mu_4^\eps (z , \br; \bx_1,\bx_2,\by_1,\by_2 )
 := \mu_4 \Big( \frac{z}{\eps}, \frac{\br}{\eps}+\bx_1 , \frac{\br}{\eps}+\bx_2, \frac{\br}{\eps}+\by_1 , \frac{\br}{\eps}+\by_2\Big)  .
\end{equation}
The fourth-order moment can be expressed in terms of  $\tilde{\mu}^\eps$ as
\begin{eqnarray*}
 &&\mu_4^\eps (z , \br; \bx_1,\bx_2,\by_1,\by_2 )  =
\frac{1}{(2\pi)^8}
\iint \exp \Big(  i  \big(\bxi_1 \cdot \bq_1 + \bxi_2 \cdot \bq_2 +  \bzeta_1 \cdot \br_1^\eps +  \bzeta_2 \cdot \br_2   \big) \Big) \\
  && \quad \quad
  \times
   \exp \Big(  - i \frac{z}{k_0 \eps} (  \bxi_2\cdot \bzeta_2+ \bxi_1 \cdot \bzeta_1)
  \Big)
  \tilde{\mu}^\eps\big( z,\bxi_1,\bxi_2, \bzeta_1, \bzeta_2  \big)
  d\bzeta_1 d\bzeta_2 d\bxi_1 d\bxi_2 ,
\end{eqnarray*}
with
\begin{eqnarray*}
\br_1^\eps = 2 \frac{\br}{\eps} + \frac{\bx_1+\bx_2+\by_1+\by_2}{2}, \quad 
\bq_1 =  \frac{\bx_1+\bx_2-\by_1-\by_2}{2}, \\
\br_2 =  \frac{\bx_1-\bx_2+\by_1-\by_2}{2}, \quad 
\bq_2 =  \frac{\bx_1-\bx_2-\by_1+\by_2}{2} .
\end{eqnarray*}
Using Proposition \ref{prop:sci1}, the  fourth-order moment 
has the following form in the regime $\eps \to 0$:
\begin{eqnarray*}
&&  
\mu_4^\eps (z , \br; \bx_1,\bx_2,\by_1,\by_2 )\stackrel{\eps \to 0}{\longrightarrow}
  \frac{4 K(z)}{(2\pi)^8} \iint  d\balpha d\bbeta d\bzeta_2 d\bzeta_1
  e^{ -\frac{r_0^2}{2} (|\bzeta_2|^2+|\bzeta_1|^2) + 2 i \br \cdot \bzeta_1}  \\
  && \Big[ 
  \frac{r_0^8}{(2\pi)^4} e^{ -r_0^2(|\balpha|^2+|\bbeta|^2)} \\
&& +    \frac{r_0^6}{(2\pi)^3}  A(z,\balpha,\bzeta_2+\bzeta_1) e^{-r_0^2 |\bbeta|^2} e^{ i \balpha 
( \bx_1-\by_1 - \frac{\bzeta_2+\bzeta_1}{k_0}z)} \\
&& 
 +
  \frac{r_0^6}{(2\pi)^3} A(z,\bbeta,\bzeta_2-\bzeta_1) e^{-r_0^2 |\balpha|^2} e^{ i \bbeta \cdot 
  (\by_2-\bx_2-  \frac{\bzeta_2-\bzeta_1}{k_0}z)}\\
&&  +
  \frac{r_0^4}{(2\pi)^2} A(z,\balpha,\bzeta_2+\bzeta_1) A(z,\bbeta,\bzeta_2-\bzeta_1) 
e^{ i \balpha \cdot  (\bx_1-\by_1 - \frac{\bzeta_2+\bzeta_1}{k_0}z)+  
 i \bbeta \cdot (\by_2-\bx_2- \frac{\bzeta_2-\bzeta_1}{k_0}z )} 
  \Big] \\
&& \hspace*{1.1in}
+
 \frac{4 K(z)}{(2\pi)^8} \iint  d\balpha d\bbeta d\bxi_2 d\bzeta_1
 e^{ -\frac{r_0^2}{2} (|\bxi_2|^2+|\bzeta_1|^2) + 2 i \br \cdot \bzeta_1}  \\
 &&  \times \Big[ 
  \frac{r_0^6}{(2\pi)^3}  A(z,\balpha,\bxi_2+\bzeta_1) 
  e^{-r_0^2 |\bbeta|^2} e^{i \balpha \cdot(\bx_1-\by_2 - \frac{\bxi_2+\bzeta_1}{k_0}z)} \\
&& 
 +
  \frac{r_0^6}{(2\pi)^3}  A(z,\bbeta,\bxi_2-\bzeta_1) e^{-r_0^2 |\balpha|^2} e^{i \bbeta \cdot(\by_1-\bx_2 - \frac{\bxi_2-\bzeta_1}{k_0}z)} \\
&& +
  \frac{r_0^4}{(2\pi)^2} A(z,\balpha,\bxi_2+\bzeta_1) A(z,\bbeta,\bxi_2-\bzeta_1) 
 e^{i \balpha \cdot(\bx_1-\by_2 - \frac{\bxi_2+\bzeta_1}{k_0}z) + 
 i \bbeta \cdot(\by_1-\bx_2 - \frac{\bxi_2-\bzeta_1}{k_0}z) }
  \Big]   .
\end{eqnarray*}
Using the explicit form (\ref{def:A}) of $A$, 
this expression can be simplified  to
\begin{eqnarray}
\nonumber
&&\mu_4^\eps (z , \br; \bx_1,\bx_2,\by_1,\by_2 )\stackrel{\eps \to 0}{\longrightarrow} 
 -\exp \Big( - \frac{k_0^2 C({\bf 0}) z}{2} \Big) \exp\Big(- \frac{2 |\br|^2}{r_0^2} \Big)\\
\nonumber
&&\quad + 
 \Big[  \frac{r_0^2}{4 \pi} \int \exp \Big( \frac{k_0^2 }{4} \int_0^z C\big( \bzeta \frac{z'}{k_0}+\by_1-\bx_1\big) -C({\bf 0}) dz' 
 - \frac{r_0^2|\bzeta|^2}{4} + i \bzeta \cdot \br  \Big) d\bzeta\Big]\\
\nonumber
&&\quad \quad \times 
 \Big[  \frac{r_0^2}{4 \pi} \int \exp \Big( \frac{k_0^2 }{4} \int_0^z C\big( \bzeta \frac{z'}{k_0}+\by_2-\bx_2\big) -C({\bf 0}) dz' 
 - \frac{r_0^2|\bzeta|^2}{4} + i \bzeta \cdot \br  \Big) d\bzeta\Big]\\
\nonumber
&&\quad+ 
 \Big[  \frac{r_0^2}{4 \pi} \int \exp \Big( \frac{k_0^2 }{4} \int_0^z C\big( \bzeta \frac{z'}{k_0}+\by_2-\bx_1 \big) -C({\bf 0}) dz' 
 - \frac{r_0^2|\bzeta|^2}{4} + i \bzeta \cdot \br  \Big) d\bzeta\Big]\\
&& \quad \quad \times
 \Big[  \frac{r_0^2}{4 \pi} \int \exp \Big( \frac{k_0^2 }{4} \int_0^z C\big( \bzeta \frac{z'}{k_0}+\by_1-\bx_2 \big) -C({\bf 0}) dz' 
 - \frac{r_0^2|\bzeta|^2}{4} + i \bzeta \cdot \br  \Big) d\bzeta\Big]
  . \hspace*{0.3in}
 \label{eq:limitmueps4}
 \end{eqnarray}

For comparison, the scaled second-order moment defined by
\begin{equation}
\label{def:mu2eps}
\mu_2^{\eps}(z,\br;\bx,\by)  :=  
\mu\Big(\frac{z}{\eps},\frac{\br}{\eps} +\bx,\frac{\br}{\eps} +\by\Big) 
\end{equation}
is given by 
(see  (\ref{eq:transb12}) with $r_0\to r_0/\eps$,  $z \to z/\eps$, and $C \to \eps C$):
\begin{eqnarray}
\nonumber
\mu_2^{\eps}(z,\br;\bx,\by)  &=&
\frac{r_0^2 }{4 \pi \eps^2}  \int
\exp \Big(-\frac{\eps^2}{4 r_0^2} \big|  \bx-\by - \bzeta  \frac{z}{k_0\eps}\big|^2 -
\frac{r_0^2   |\bzeta|^2 }{4\eps^2} + i \bzeta \cdot   \big( \frac{\br}{\eps} +\frac{\bx+\by}{2} \big) \Big)\\
\nonumber&& \times \exp \Big( \frac{k_0^2 \eps}{4} \int_0^{z/\eps}
 {C}\big(  \bx-\by - \bzeta \frac{z'}{k_0}  \big)
- { C}( {\bf 0} ) dz' \Big)
 d \bzeta \\
 \nonumber&=&
\frac{r_0^2 }{4 \pi}  \int
\exp \Big(-\frac{\eps^2}{4 r_0^2} \big|   \bx-\by - \bzeta  \frac{z}{k_0}\big|^2 -
\frac{r_0^2   |\bzeta|^2 }{4} + i \bzeta \cdot  \big(  \br +\eps \frac{\bx+\by}{2} \big)  \Big)\\
&& \times \exp \Big( \frac{k_0^2}{4} \int_0^{z}
 {C}\big(  \bx-\by - \bzeta \frac{z'}{k_0}  \big)
- { C}( {\bf 0} ) dz' \Big)
 d \bzeta ,
\end{eqnarray}
so that in the limit $\eps \to 0$ : 
\begin{equation}
\mu_2^{\eps}(z,\br;\bx,\by)  \stackrel{\eps \to 0}{\longrightarrow}  
 \frac{r_0^2}{4 \pi} \int \exp \Big( \frac{k_0^2 }{4} \int_0^z C\big( \bzeta \frac{z'}{k_0}+\by-\bx \big) -C({\bf 0}) dz' 
 - \frac{r_0^2|\bzeta|^2}{4} + i \bzeta \cdot \br  \Big) d\bzeta.
 \label{eq:limitmueps2}
\end{equation}

The scaled first-order moment defined by
\begin{equation}
\mu_1^\eps (z,\br;\bx) : = \mu_1\Big(\frac{z}{\eps}, \frac{\br}{\eps}+\bx\Big)
\end{equation}
is given by (see (\ref{eq:expresmu1}) with $r_0\to r_0/\eps$,  $z \to z/\eps$, and $C \to \eps C$):
$$
\mu_1^\eps (z,\br;\bx) =   \frac{r_0^2}{{r_z^\eps}^2} \exp \Big( - \frac{k_0^2 C({\bf 0}) z}{8}\Big)
   \exp \Big( -\frac{| \br +\eps \bx|^2}{2 {r_z^\eps}^2 } \Big),\quad \quad  {r_z^\eps}^2 = r_0^2 \left( 1 +\frac{i\eps z}{k_0 r_0^2}  \right)  .
$$
In the limit $\eps \to 0$, we have
\begin{equation}
\mu_1^{\eps}(z,\br;\bx)  \stackrel{\eps \to 0}{\longrightarrow}  
\exp \Big( - \frac{k_0^2 C({\bf 0}) z}{8} \Big) \exp\Big(- \frac{ |\br|^2}{2r_0^2} \Big).
\label{eq:limitmueps1}
\end{equation}

As a consequence of (\ref{eq:limitmueps4}), (\ref{eq:limitmueps2}), and (\ref{eq:limitmueps1}),
we can check that, in the limit $\eps \to 0$, 
the Gaussian summation rule is satisfied.

\begin{proposition}
Under Hypothesis (\ref{hyp:h}),
in the scintillation regime $\eps \to 0$, we have
\begin{eqnarray}
\nonumber
\mu_4^\eps (z,\br;\bx_1,\bx_2,\by_1,\by_2) &=&\mu_2^\eps(z,\br;\bx_1,\by_1)  \mu_2^\eps(z,\br;\bx_2,\by_2) \\
\nonumber&&+
 \mu_2^\eps(z,\br;\bx_1,\by_2)  \mu_2^\eps(z,\br;\bx_2,\by_1) \\
&& -
    \mu_1^\eps(z,\br;\bx_1)   \mu_1^\eps(z,\br;\bx_2)  \overline{\mu_1^\eps(z,\br;\by_1)} \overline{  \mu_1^\eps(z,\br;\by_2)} ,
    \hspace*{0.2in}
\end{eqnarray}
in the sense that the  terms of this equation converge to  quantities that satisfy the Gaussian summation rule.
\end{proposition}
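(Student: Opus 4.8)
The plan is to treat this proposition as a purely algebraic verification built on top of the three limit formulas already in hand: equation (\ref{eq:limitmueps4}) for the rescaled fourth moment $\mu_4^\eps$, equation (\ref{eq:limitmueps2}) for the rescaled second moment $\mu_2^\eps$, and equation (\ref{eq:limitmueps1}) for the rescaled mean field $\mu_1^\eps$. The first of these is the genuine output of Proposition \ref{prop:sci1} (obtained by inserting the expansion (\ref{eq:propsci11}), discarding the $L^1$-negligible remainder $R^\eps$, and carrying out the resulting Gaussian integrals), while the other two follow directly from the closed-form expressions (\ref{eq:transb12}) and (\ref{eq:expresmu1}) under the scintillation rescaling. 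I would take all three limits as established inputs, so that nothing remains but to match the right-hand side of (\ref{eq:limitmueps4}) against the products formed from (\ref{eq:limitmueps2}) and (\ref{eq:limitmueps1}).

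First I would dispose of the mean-field contribution. The key observation is that the limiting mean field in (\ref{eq:limitmueps1}) is real and independent of the transverse offset $\bx$, depending only on $z$ and $\br$. Consequently the product $\mu_1^\eps(z,\br;\bx_1)\mu_1^\eps(z,\br;\bx_2)\overline{\mu_1^\eps(z,\br;\by_1)}\,\overline{\mu_1^\eps(z,\br;\by_2)}$ converges to the fourth power of a single factor, which combines the four damping exponents $\exp(-k_0^2 C({\bf 0})z/8)$ into $\exp(-k_0^2 C({\bf 0})z/2)$ and the four Gaussian factors $\exp(-|\br|^2/(2r_0^2))$ into $\exp(-2|\br|^2/r_0^2)$. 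This is exactly the isolated negative term appearing on the first line of (\ref{eq:limitmueps4}).

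Next I would match the two pairings of second moments. Comparing the bracketed factors of (\ref{eq:limitmueps4}) with the limit (\ref{eq:limitmueps2}), each factor is the same $\br$-Fourier integral and differs only through the offset placed inside $C$; the factor carrying the shift $\by_j-\bx_i$ is precisely the $\eps\to0$ limit of $\mu_2^\eps(z,\br;\bx_i,\by_j)$. Reading off the four offsets, the product of the first two bracketed factors reproduces $\mu_2^\eps(z,\br;\bx_1,\by_1)\,\mu_2^\eps(z,\br;\bx_2,\by_2)$ in the limit, while the product of the remaining two reproduces $\mu_2^\eps(z,\br;\bx_1,\by_2)\,\mu_2^\eps(z,\br;\bx_2,\by_1)$. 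Adding these to the mean-field term from the previous step yields exactly the three-term combination claimed in the proposition.

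The substantive analysis is entirely contained in Proposition \ref{prop:sci1} and the limit passage it licenses; the present statement is essentially bookkeeping. The only points demanding care are the exact pairing of $(\bx_i,\by_j)$ with the offsets appearing in (\ref{eq:limitmueps4}) and the verification that the numerical coefficients combine correctly (the collapse of the four factors $-|\br|^2/(2r_0^2)$ into $-2|\br|^2/r_0^2$, and of the four damping rates into one). I would also stress that, as phrased, the identity is asserted in the limiting sense only: each of $\mu_4^\eps$, $\mu_2^\eps$, $\mu_1^\eps$ converges separately as $\eps\to0$ and the limits satisfy the Gaussian summation rule, with no exact finite-$\eps$ identity being claimed, so that once the three limits are identified the conclusion is immediate.
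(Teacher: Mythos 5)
Your proposal is correct and follows exactly the paper's route: the paper also treats this proposition as an immediate algebraic consequence of the three limit formulas (\ref{eq:limitmueps4}), (\ref{eq:limitmueps2}), and (\ref{eq:limitmueps1}), matching the isolated negative term with the fourth power of the ($\bx$-independent, real) limiting mean field and the two products of bracketed integrals with the two pairings $\mu_2^\eps(z,\br;\bx_1,\by_1)\mu_2^\eps(z,\br;\bx_2,\by_2)$ and $\mu_2^\eps(z,\br;\bx_1,\by_2)\mu_2^\eps(z,\br;\bx_2,\by_1)$ via the offsets $\by_j-\bx_i$. No gaps; the substantive work is indeed all in Proposition \ref{prop:sci1}, as you note.
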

As noted in Section \ref{sec:main} this result is in agreement with the physical conjecture that 
a strongly scattered field  has Gaussian statistics.

\section{The Intensity Correlation Function}
\label{sec:intensity}
The intensity correlation function is usually defined by \cite[Eq.~(20.125)]{ishimaru}:
\begin{equation}
\label{def:gamma4}
\Gamma^{(4)}(z,\br,\bq)  =  \EE \Big[ \big| {u}\big(z,\br +\frac{\bq}{2}
\big)\big|^2 \big| {u}\big(z,\br -\frac{\bq}{2} \big)\big|^2  \Big] .
\end{equation}
Accordingly
we define the intensity correlation function  in our framework in the scintillation regime by
\begin{eqnarray}
\nonumber
\Gamma^{(4,\eps)}(z,\br,\bq) &:=&   \mu_4 \Big( \frac{z}{\eps},  \frac{\br}{\eps} +\frac{\bq}{2} ,   \frac{\br}{\eps} -\frac{\bq}{2} ,   \frac{\br}{\eps} +\frac{\bq}{2} ,   \frac{\br}{\eps} -\frac{\bq}{2}  \Big) \\
&=&   \mu \Big( \frac{z}{\eps},  \bq_1 = {\bf 0} , \bq_2= {\bf 0},  \br_1 = 2 \frac{\br}{\eps}   , \br_2= \bq   \Big) ,
\label{def:gamma4eps}
\end{eqnarray}
that is, the mid-point $\br/\eps$ is of the order of the initial beam width, and the off-set $\bq$ is of the order of the correlation length of the medium.
The  intensity correlation function can be expressed in terms of 
$\tilde{\mu}^\eps$ as
\begin{eqnarray*}
\Gamma^{(4,\eps)}(z,\br,\bq)  &=&  
\frac{1}{(2\pi)^8}
\iint \exp \Big(  
 2 i \frac{\bzeta_1 \cdot \br}{\eps} + i \bzeta_2 \cdot \bq - i \frac{z}{k_0 \eps} (  \bxi_2\cdot \bzeta_2+ \bxi_1 \cdot \bzeta_1)
  \Big) \\
  &&
  \times\tilde{\mu}^\eps\big( z,\bxi_1,\bxi_2, \bzeta_1, \bzeta_2  \big)
  d\bzeta_1 d\bzeta_2 d\bxi_1 d\bxi_2 .
\end{eqnarray*}
Using the result obtained in the previous section:
\begin{eqnarray}
\nonumber
&& \Gamma^{(4,\eps)}(z,\br,\bq)\stackrel{\eps \to 0}{\longrightarrow} 
 -\exp \Big( - \frac{k_0^2 C({\bf 0}) z}{2} \Big) \exp\Big(- \frac{2 |\br|^2}{r_0^2} \Big)\\
\nonumber
&&\quad + 
 \Big|  \frac{r_0^2}{4 \pi} \int \exp \Big( \frac{k_0^2 }{4} \int_0^z C\big( \bzeta \frac{z'}{k_0}\big) -C({\bf 0}) dz' 
 - \frac{r_0^2|\bzeta|^2}{4} + i \bzeta \cdot \br  \Big) d\bzeta\Big|^2\\
&&\quad+ 
 \Big|  \frac{r_0^2}{4 \pi} \int \exp \Big( \frac{k_0^2 }{4} \int_0^z C\big( \bzeta \frac{z'}{k_0}-\bq \big) -C({\bf 0}) dz' 
 - \frac{r_0^2|\bzeta|^2}{4} + i \bzeta \cdot \br  \Big) d\bzeta\Big|^2. \hspace*{0.2in}
 \label{eq:expresGamma4eps}
\end{eqnarray}
For comparison, the scaled mutual coherence function defined by
\begin{equation}
\label{def:gamma2eps}
\Gamma^{(2,\eps)}(z,\br,\bq)  := \mu_2\Big(\frac{z}{\eps}, \frac{\br}{\eps}+\frac{\bq}{2} , \frac{\br}{\eps}-\frac{\bq}{2} \Big) 
\end{equation}
satisfies in the limit $\eps \to 0$ : 
\begin{equation}
\label{eq:expresGamma2eps}
 \Gamma^{(2,\eps)}(z,\br,\bq)\stackrel{\eps \to 0}{\longrightarrow}  
 \frac{r_0^2}{4 \pi} \int \exp \Big( \frac{k_0^2 }{4} \int_0^z C\big( \bzeta \frac{z'}{k_0}-\bq \big) -C({\bf 0}) dz' 
 - \frac{r_0^2|\bzeta|^2}{4} + i \bzeta \cdot \br  \Big) d\bzeta.
\end{equation}

Before giving the result about the scintillation index,
we  briefly revisit the case of a plane wave, which corresponds to the limit case $r_0 \to \infty$
and which was already addressed in \cite{garniers3}.
We here find that, in the double limit $\eps \to 0$ and $r_0 \to \infty$:
\begin{eqnarray*}
\lim_{r_0 \to \infty} \lim_{\eps \to 0} \Gamma^{(2,\eps)}(z,\br,\bq)  
&=&    \exp  \Big(\frac{k_0^2 ( C(\bq)  - C({\bf 0}) ) z}{4}\Big)  ,\\
\end{eqnarray*} 
moreover, by (\ref{eq:expresGamma4eps})
\begin{eqnarray*}
\lim_{r_0 \to \infty} \lim_{\eps \to 0} \Gamma^{(4,\eps)}(z,\br,\bq)  
&=&  1- \exp \Big(-\frac{k_0^2 C({\bf 0}) z}{2}\Big)  
+  \exp  \Big(\frac{k_0^2 ( C(\bq)  - C({\bf 0}) ) z}{2}\Big)  ,
\end{eqnarray*}
which is the result obtained in \cite{garniers3}. Note that in \cite{garniers3} we first took the limit 
$r_0 \to \infty$,
and then $\eps \to 0$, while we here do the opposite. The two limits are exchangeable.
As discussed in \cite{garniers3}, this result shows in particular that the scintillation 
index, that is,  the variance of the intensity 
divided by the square of the mean intensity as defined below in
(\ref{eq:scint}),   is close to one when $k_0^2 C({\bf 0}) z\gg 1$.

We next consider the scintillation index in the general case of an initial Gaussian
beam as considered here. 
The expressions (\ref{eq:expresGamma4eps}) and (\ref{eq:expresGamma2eps}) allow us to describe 
 the scintillation index of the transmitted beam for the general case of an initial Gaussian beam with radius $r_0$. 

The scintillation index is usually defined as the square coefficient of variation of the intensity \cite[Eq. (20.151)]{ishimaru}:
$$
 S(z,\br)= 
\frac{ \EE \big[  \big| {u} (z,\br) \big|^4 \big]-
 \EE \big[   \big| {u} (z,\br)  \big|^2\big]^2 }
 { \EE \big[   \big| {u} (z,\br)   \big|^2\big]^2 } .
$$
In our framework, in the scintillation regime,
we define the scintillation index as:
\begin{equation}
\label{eq:scint}
 S^\eps(z,\br) := 
 \frac{\Gamma^{(4,\eps)}(z,\br,{\bf 0}) -\Gamma^{(2,\eps)}(z,\br,{\bf 0})^2 }{\Gamma^{(2,\eps)}(z,\br,{\bf 0})^2} .
 \end{equation}

\begin{proposition}
Under Hypothesis (\ref{hyp:h}),
the scintillation index (\ref{eq:scint})
has the following expression in the limit $\eps \to 0$:
 \begin{equation}
 S^\eps(z,\br) \stackrel{\eps \to 0}{\longrightarrow} 1 - \frac{ \exp\Big( -\frac{2 |\br|^2}{r_0^2} \Big)}{
 \Big|  \frac{1}{4 \pi} \int \exp \Big( \frac{k_0^2 }{4} \int_0^z C\big( \bu \frac{z'}{k_0 r_0}\big)  dz' 
 - \frac{|\bu|^2}{4} + i \bu \cdot \frac{\br}{r_0}  \Big) d\bu\Big|^2}
 .
\end{equation}
\end{proposition}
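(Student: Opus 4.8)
The plan is to evaluate the two ingredients of the scintillation index (\ref{eq:scint}) at zero offset $\bq={\bf 0}$ using the limits already established in (\ref{eq:expresGamma4eps}) and (\ref{eq:expresGamma2eps}), and then to reconcile the resulting expression with the claimed formula by a single rescaling of the integration variable. First I would set $\bq={\bf 0}$ in (\ref{eq:expresGamma2eps}) and denote the limit by
$$
I(z,\br) := \frac{r_0^2}{4\pi}\int \exp\Big(\frac{k_0^2}{4}\int_0^z C\big(\bzeta \tfrac{z'}{k_0}\big)-C({\bf 0})\,dz' - \frac{r_0^2|\bzeta|^2}{4} + i\bzeta\cdot\br\Big)\,d\bzeta ,
$$
so that $\Gamma^{(2,\eps)}(z,\br,{\bf 0})\to I$. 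Because $C$ is even, the map $\bzeta\mapsto-\bzeta$ conjugates the integrand, hence $I$ is real; indeed it is the limit of the nonnegative mean intensities $\EE[|u^\eps|^2]$, so $I\ge 0$.

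The key observation is that at $\bq={\bf 0}$ the two squared-modulus terms in (\ref{eq:expresGamma4eps}) have identical arguments $C(\bzeta z'/k_0)$, so they coincide and each equals $|I|^2=I^2$. Thus
$$
\Gamma^{(4,\eps)}(z,\br,{\bf 0})\stackrel{\eps\to0}{\longrightarrow} 2I^2 - \exp\Big(-\frac{k_0^2 C({\bf 0})z}{2}\Big)\exp\Big(-\frac{2|\br|^2}{r_0^2}\Big).
$$
Substituting these two limits into the definition (\ref{eq:scint}), the numerator becomes $2I^2 - \exp(-k_0^2C({\bf 0})z/2)\exp(-2|\br|^2/r_0^2) - I^2$, and the two copies of $I^2$ collapse, giving
$$
S^\eps(z,\br)\stackrel{\eps\to0}{\longrightarrow} 1 - \frac{\exp\big(-\tfrac{k_0^2 C({\bf 0})z}{2}\big)\exp\big(-\tfrac{2|\br|^2}{r_0^2}\big)}{I^2}.
$$

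It then remains to match the denominator with the claimed one. I would substitute $\bzeta=\bu/r_0$ in $I$; using $C(\bzeta z'/k_0)=C(\bu z'/(k_0 r_0))$, $r_0^2|\bzeta|^2/4=|\bu|^2/4$, $\bzeta\cdot\br=\bu\cdot\br/r_0$ and $d\bzeta=d\bu/r_0^2$, the prefactor $r_0^2$ cancels and the $\bu$-independent term $-k_0^2 C({\bf 0})z/4$ factors out, yielding $I=\exp(-k_0^2 C({\bf 0})z/4)\,J$ with
$$
J(z,\br):=\frac{1}{4\pi}\int\exp\Big(\frac{k_0^2}{4}\int_0^z C\big(\bu\tfrac{z'}{k_0 r_0}\big)\,dz' - \frac{|\bu|^2}{4} + i\bu\cdot\frac{\br}{r_0}\Big)d\bu .
$$
Hence $I^2=\exp(-k_0^2C({\bf 0})z/2)\,J^2$, and since $J$ is real $J^2=|J|^2$; the exponential factor in the numerator cancels exactly against the one carried by $I^2$, leaving $1-\exp(-2|\br|^2/r_0^2)/|J|^2$, which is the asserted expression.

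The only genuine obstacle is the mild one of justifying that the limit of the ratio equals the ratio of the limits, that is, that $I=\lim_\eps \Gamma^{(2,\eps)}(z,\br,{\bf 0})$ is strictly positive so that division is legitimate. This follows because for each $\eps>0$ the denominator is the square of a strictly positive mean intensity, and the convergence of both numerator and denominator is an immediate consequence of Proposition \ref{prop:sci1}: the integral representations (\ref{eq:expresGamma4eps}) and (\ref{eq:expresGamma2eps}) are obtained by testing $\tilde{\mu}^\eps$ against bounded exponential kernels and discarding the $L^1$-small remainder $R^\eps$, which transfers strict positivity to the limit $I$. Everything else is the bookkeeping of the two algebraic simplifications above.
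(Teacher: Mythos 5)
Your proposal is correct and follows exactly the route the paper intends: set $\bq={\bf 0}$ in the limits (\ref{eq:expresGamma4eps}) and (\ref{eq:expresGamma2eps}), note that the two squared-modulus terms coincide and cancel one copy against the denominator of (\ref{eq:scint}), and rescale $\bzeta=\bu/r_0$ to extract the factor $\exp(-k_0^2C({\bf 0})z/2)$ and arrive at the stated formula. The only cosmetic caveat is that strict positivity of the limiting denominator is cleanest to read off from the explicit integral $J$ itself (the integrand is a positive-definite function times a Gaussian, so its Fourier transform dominates a strictly positive Gaussian) rather than from "transferring" positivity through the limit, but this does not affect the argument.
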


Let us consider the following form of the covariance function of the medium fluctuations:
$$
C(\bx) = C({\bf 0}) \tilde{C} \Big( \frac{|\bx|}{l_{\rm c}} \Big),
$$
with $\tilde{C}(0)=1$ and the width of the function $x \to \tilde{C}(x)$ is of order one.
For instance, we may consider  $\tilde{C}(x) = \exp(-x^2)$. 
Then the scintillation index at the beam center $\br={\bf 0}$ is
 \begin{equation}
 S^\eps(z,{\bf 0})\stackrel{\eps \to 0}{\longrightarrow} 1- \frac{4}{
 \Big|  \int_0^\infty \exp \Big( \frac{2z}{Z_{\rm sca}} \int_0^1 \tilde{C}\big( u \frac{z}{Z_{\rm c}} s\big)  ds
 - \frac{u^2}{4}  \Big) u du\Big|^2},
 \label{eq:scinnorm}
 \end{equation}
which is a function of $z/Z_{\rm sca}$ and $z/Z_{\rm c}$ only (or, equivalently, a 
function of $z/Z_{\rm sca}$ and $Z_{\rm c}/Z_{\rm sca}$ only), 
where $Z_{\rm sca}= \frac{8}{k_0^2 C({\bf 0})}$ and $Z_{\rm c} = k_0 r_0 l_{\rm c}$.
Here $Z_{\rm sca}$ is the scattering mean free path, since the mean field decays
exponentially at this rate:
$$
\EE \Big[ u \big( \frac{z}{\eps}, \frac{\bx}{\eps} \big) \Big] \stackrel{\eps \to 0}{\longrightarrow} 
\exp \Big( - \frac{|\bx|^2}{2 r_0^2} \Big) \exp \Big( - \frac{z}{Z_{\rm sca}} \Big),
$$  
as can be seen from (\ref{eq:expresmu1}). 
Moreover, 
$Z_{\rm c}$ is the typical propagation distance for which diffractive effects are of order one,
as shown in \cite[Eq.~ 4.4]{garniers1}. 
The function (\ref{eq:scinnorm}) is plotted in Figure \ref{fig:1}  in the case of  
Gaussian  correlations for  the medium fluctuations: 
$\tilde{C}(x) = \exp(-x^2)$.
It is interesting to note that, even if the propagation distance is larger than the scattering mean free
path, the scintillation index can be smaller than one if $Z_{\rm c}$ is small enough.

\begin{figure}
\begin{center}
\begin{tabular}{c}
\includegraphics[width=7.0cm]{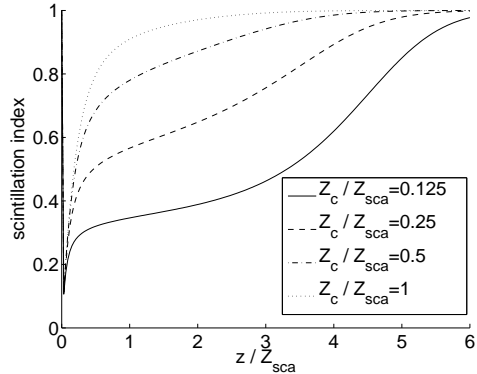}
\end{tabular}
\end{center}
\caption{Scintillation index at the beam center 
(\ref{eq:scinnorm}) as a function of the propagation distance for different values
of $Z_{\rm sca}$ and $Z_{\rm c}$. Here $\tilde{C}(x) = \exp(-x^2)$.
\label{fig:1} 
}
\end{figure}

In order to get more explicit expressions 
that facilitate  interpretation of  the results let us assume that $C(\bx)$ can be expanded as
\begin{equation}
\label{eq:expandC}
C(\bx)=C({\bf 0}) - \frac{\gamma}{2} |\bx|^2 + o (|\bx|^2), \quad \quad \bx \to {\bf 0}.
\end{equation}
When scattering is strong in the sense that the propagation distance
 is larger than the scattering mean free path $k_0^2 C({\bf 0}) z \gg 1$, 
 we have
 $$
K(z)^{1/2} A( z,\bxi,\bzeta) \simeq \frac{(2\pi)^4}{ \pi k_0^2 \gamma z}
\exp \Big( -\frac{\gamma z^3}{96} |\bzeta|^2 -\frac{2}{k_0^2 \gamma z} |\bxi|^2 + \frac{ iz}{2k_0}
\bzeta \cdot \bxi \Big) ,
 $$
 and  Eqs.~(\ref{eq:expresGamma4eps}) and (\ref{eq:expresGamma2eps}) can be simplified:
\begin{eqnarray}
\label{eq:corfieldplane}
\nonumber
&&\Gamma^{(2,\eps)}(z,\br,\bq) 
\stackrel{\eps \to 0}{\longrightarrow}
 \frac{r_0^2}{r_0^2 +\frac{\gamma z^3}{6} }\\
 && \quad \quad \times 
\exp\Big( - \frac{ |\br|^2}{r_0^2  +\frac{\gamma z^3}{6}} - \frac{k_0^2 \gamma z|\bq|^2}{8}
\frac{r_0^2 +\frac{\gamma z^3}{24}}{r_0^2 +\frac{\gamma z^3}{6}}
+ i  \frac{k_0\gamma z^2 \br \cdot \bq}{4(r_0^2 +\frac{\gamma z^3}{6})} \Big)
,\\
\nonumber
&&\Gamma^{(4,\eps)}(z,\br,\bq)
\stackrel{\eps \to 0}{\longrightarrow}
 \frac{r_0^4}{\big(r_0^2 +\frac{\gamma z^3}{6}\big)^2}\\
 && \quad \quad \times
\exp\Big( - \frac{2 |\br|^2}{r_0^2  +\frac{\gamma z^3}{6}} \Big)
\Big[ 1 + 
\exp\Big(  - \frac{k_0^2 \gamma z|\bq|^2}{4}
\frac{r_0^2 +\frac{\gamma z^3}{24}}{r_0^2 +\frac{\gamma z^3}{6}} \Big)
\Big].
\label{eq:corintplane}
\end{eqnarray}
This shows that, in the regime $\eps \to 0$ and $k_0^2 C({\bf 0})z \gg 1$:\\
- The beam radius is $R_z$ with
\begin{equation}
\label{eq:beamradius}
R_z^2 :=  r_0^2 +\frac{\gamma z^3}{6} .
\end{equation}
- The correlation radius of the intensity distribution is $\rho_z$ with
\begin{equation}
\label{eq:corrradius}
\rho_z^2 := \frac{4}{k_0^2 \gamma z }\frac{r_0^2 +\frac{\gamma z^3}{6}}{r_0^2 +\frac{\gamma z^3}{24}} ,
\end{equation}
which is of the same order as the correlation radius of the field
(compare the $\bq$-dependence of (\ref{eq:corfieldplane}) and (\ref{eq:corintplane})).\\
- The scintillation index is close to one:
\begin{equation}
\label{eq:scin:strong}
 S^\eps(z,\br) = \frac{\Gamma^{(4,\eps)}(z,\br,{\bf 0}) - \Gamma^{(2,\eps)}(z,\br,{\bf 0})^2}{\Gamma^{(2,\eps)}(z,\br,{\bf 0})^2}
 \simeq 1 .
\end{equation}
This observation is consistent with the physical intuition that, in the strongly scattering regime
$z / Z_{\rm sca} \gg 1$, the wave field is conjectured to have zero-mean complex circularly symmetric Gaussian statistics,
and therefore the intensity is expected to have exponential (or Rayleigh) distribution \cite{fante75,ishimaru},
in agreement with (\ref{eq:scin:strong}).

\section{Stability of the Wigner Transform of the Field}
\label{sec:wigner}
The Wigner transform of the transmitted field is defined by
\begin{equation}
{W}^\eps ( z,\br,\bxi) := 
\int 
\exp \big( - i  \bxi \cdot \bq  \big)
  {u}\big(\frac{z}{\eps},  \frac{\br}{\eps} + \frac{\bq}{2} \big)
 \overline{u} \big( \frac{z}{\eps},   \frac{\br}{\eps} - \frac{\bq}{2}\big)   d \bq  .
\end{equation}  
It is an important quantity that can be interpreted as the angularly-resolved  wave energy density
(note, however, that it is real-valued but not always non-negative valued).
Remember that the initial source is (\ref{def:feps}).
This means that the Wigner transform is observed at a mid point $\br/\eps$ that is at the scale of the initial beam radius,
while the offset $\bq$ is observed at the scale of the correlation length of the medium.
In the homogeneous case, we find  
\begin{equation}
\label{eq:wignerhomo1}
{W}^\eps ( z,\br,\bxi) \mid_{\rm homo} = 
\frac{4 \pi r_0^2}{\eps^2} \exp \big( - \frac{|\bxi|^2 r_0^2}{\eps^2}  
 - \frac{|\br - \bxi z/k_0|^2}{r_0^2} \big)  ,
\end{equation}
which is concentrated in a narrow cone in $\bxi$. 
Indeed the $\bxi$-dependence of the Wigner transform reflects the angular diversity of the beam.
In the limit $\eps \to 0$, we have 
\begin{equation}
{W}^\eps ( z,\br,\bxi) \mid_{\rm homo} \stackrel{\eps \to 0}{\longrightarrow}
(2\pi)^2 \delta(\bxi) 
\exp \big( - \frac{|\br|^2}{r_0^2} \big)  ,
\end{equation}
in the sense that, for any continuous and bounded function $\psi$,
$$
\iint {W}^\eps ( z,\br,\bxi) \mid_{\rm homo} \psi(\br,\bxi) d\br d\bxi \stackrel{\eps \to 0}{\longrightarrow} 
(2\pi)^2\int \exp \Big( - \frac{|\br|^2}{r_0^2} \Big)\psi(\br,{\bf 0} ) d\br  .
$$

In the random case,
the $\bxi$-dependence of the Wigner transform depends on the angular diversity of the initial beam
but also on the scattering by the random medium, which dramatically broadens  it because the correlation length
of the medium is smaller than the initial beam width. As a result
(see (\ref{solWT1b})  with $r_0\to r_0/\eps$,  $z \to z/\eps$, and $C \to \eps C$), 
the expectation of the Wigner transform is:
\begin{eqnarray}
\nonumber
\EE[ {W}^\eps ( z,\br,\bxi)] & = & 
\frac{r_0^2}{4\pi \eps^2} \iint \exp \Big(  - \frac{r_0^2|\bzeta|^2}{4 \eps^2} 
-\frac{\eps^2 |\bq|^2}{4r_0^2} + i \frac{\bzeta}{\eps} \cdot \big( \br  - \frac{\bxi z}{k_0}  \big) - i \bxi \cdot  \bq  \Big) \\
\nonumber
&& \times \exp \Big( \frac{k_0^2 \eps}{4} \int_0^{z/\eps} C\big( \bq+\bzeta \frac{z'}{k_0} \big) -C({\bf 0}) dz' \Big) d\bzeta  d \bq \\
\nonumber
&=&\frac{r_0^2}{4\pi} \iint \exp \Big(  - \frac{r_0^2|\bzeta|^2}{4} 
-\frac{\eps^2 |\bq|^2}{4r_0^2} + i \bzeta  \cdot \big( \br  - \frac{\bxi z}{k_0}  \big) - i \bxi \cdot  \bq  \Big) \\
&& \times \exp \Big( \frac{k_0^2}{4} \int_0^{z} C\big( \bq+\bzeta \frac{z'}{k_0} \big) -C({\bf 0}) dz' \Big) d\bzeta  d \bq ,
\end{eqnarray}
so that in the limit $\eps \to 0$ it is given by
\begin{eqnarray}
\nonumber
\EE[ {W}^\eps ( z,\br,\bxi)] & \stackrel{\eps \to 0}{\longrightarrow}& 
\frac{r_0^2}{4\pi} \iint \exp \Big(  - \frac{r_0^2|\bzeta|^2}{4} + i \bzeta \cdot \br 
 -i \bxi \cdot \big(\bq + \bzeta  \frac{z}{k_0} \big) \Big) \\
&& \times \exp \Big( \frac{k_0^2 }{4} \int_0^z C\big( \bq+\bzeta \frac{z'}{k_0} \big) -C({\bf 0}) dz' \Big) d\bzeta  d \bq .
\end{eqnarray}
More precisely, the mean Wigner transform can be split into two pieces: a narrow cone and a broad cone in $\bxi$:
\begin{eqnarray}
\nonumber
&&\EE[ {W}^\eps ( z,\br,\bxi)] \stackrel{\eps \to 0}{\longrightarrow} 
\frac{K(z)^{1/2}}{(2\pi)^2 } \delta(\bxi)  \exp \Big( - \frac{|\br|^2}{r_0^2} \Big)  \\
&& \quad \quad + \frac{r_0^2  K(z)^{1/2}}{(2\pi)^3} \int \exp\Big( 
 -  \frac{r_0^2 |\bzeta|^2}{4} + i \bzeta \cdot \big( \br - \bxi   \frac{z}{k_0} \big) \Big)
 A(z,\bxi,\bzeta)  d\bzeta 
 .
 \label{eq:meanwigner2}
\end{eqnarray}
The narrow cone is the contribution of the coherent transmitted wave components and it decays exponentially
with the propagation distance (see the expression (\ref{def:K}) for  $K(z)$). 
The broad cone is the contributions of the incoherent scattered waves and it becomes dominant when the propagation
distance becomes so large that $k_0^2C({\bf 0}) z\gg1$.

It is known that the Wigner transform is not statistically stable,
and that it is necessary to smooth it (that is to say, to convolve it with a kernel)
to get a quantity that can be measured in a statistically stable way
(that is to say, the Wigner transform for one typical realization is approximately equal to its expected value) \cite{balMMS,PRS07}.
Our goal in this section is to quantify this statistical stability.

Let us consider two positive parameters $r_{\rm s}$ and $\xi_{\rm s}$ and define the smoothed Wigner transform:
\begin{equation}
{W}^\eps_{\rm s} ( z,\br,\bxi) = 
\frac{1}{(2\pi)^2 \eps^2 r_{\rm s}^2 \xi_{\rm s}^2} \iint
{W}^\eps ( z,\br-\br',\bxi-\bxi') \exp \Big(- \frac{|\br'|^2}{2\eps^2 r_{\rm s}^2} - \frac{|\bxi'|^2}{2 \xi_{\rm s}^2} \Big)
d\br' d\bxi'   .
\label{def:widetildeWseps}
\end{equation}
In view of the form of the Wigner transform in the homogeneous case in (\ref{eq:wignerhomo1}) this is indeed the natural
scales at which to smooth.
The expectation of the smoothed Wigner transform is in the limit $\eps \to 0$:
\begin{eqnarray}
\nonumber
\EE\big[ {W}^\eps_{\rm s} ( z,\br,\bxi)\big] & \stackrel{\eps \to 0}{\longrightarrow} & 
\frac{r_0^2}{4\pi} \iint \exp\Big( 
 - 
\frac{r_0^2 |\bzeta|^2 }{4}
- \frac{\xi_{\rm s}^2 |\bq + \bzeta  \frac{z}{k_0}|^2}{2} 
 -i \bxi \cdot \big(\bq + \bzeta  \frac{z}{k_0} \big) \Big)\\
&& \times \exp \Big(  i \bzeta \cdot \br + \frac{k_0^2 }{4} \int_0^z C\big( \bq
+\bzeta \frac{z'}{k_0} \big) -C({\bf 0}) dz' \Big)  d\bzeta d\bq .
\end{eqnarray}
It can also be written as follows.
\begin{proposition}
\label{prop:smooth1}
Under Hypothesis (\ref{hyp:h}),
the expectation of the smoothed Wigner transform (\ref{def:widetildeWseps}) satisfies, in the scintillation regime $\eps \to 0$:
\begin{eqnarray}
\nonumber
\EE\big[ {W}^\eps_{\rm s} ( z,\br,\bxi)\big] & \stackrel{\eps \to 0}{\longrightarrow} & 
\frac{K(z)^{1/2}}{(2\pi)^3 \xi_{\rm s}^2} 
 \exp \Big( -\frac{|\bxi|^2}{2\xi_{\rm s}^2} \Big) \exp \Big( -\frac{|\br|^2}{r_0^2} \Big)  \\
 && \hspace*{-1.2in}
+ \frac{K(z)^{1/2} r_0^2}{(2\pi)^4\xi_{\rm s}^2} 
  \iint A(z,\bxi',\bzeta)
\exp \Big( - \frac{r_0^2 |\bzeta|^2}{4} - \frac{|\bxi'-\bxi|^2}{2 \xi_{\rm s}^2} +i \bzeta \cdot \big( \br- \bxi'\frac{z}{k_0}\big) 
 \Big)  d\bzeta d\bxi' ,  \hspace*{0.3in}
 \label{eq:meanwignersmooth}
\end{eqnarray}
where $K$ and $A$ are defined by (\ref{def:K}) and (\ref{def:A}).
\end{proposition}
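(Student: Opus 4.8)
The plan is to start from the limiting integral representation of $\EE[W^\eps_{\rm s}(z,\br,\bxi)]$ displayed immediately before the statement and to recast it in the form (\ref{eq:meanwignersmooth}) by a purely algebraic manipulation: the passage to the limit $\eps\to0$ has already been carried out, so the content of the proposition is the rewriting of a single double integral against the defining formulas (\ref{def:K})--(\ref{def:A}) for $K$ and $A$. First I would pull the deterministic damping factor out of the integral, writing $\exp\big(\frac{k_0^2}{4}\int_0^z [C(\bq+\bzeta\frac{z'}{k_0})-C({\bf 0})]\,dz'\big)=e^{-\frac{k_0^2}{4}C({\bf 0})z}\exp\big(\frac{k_0^2}{4}\int_0^z C(\bq+\bzeta\frac{z'}{k_0})\,dz'\big)$ and noting that $e^{-\frac{k_0^2}{4}C({\bf 0})z}=K(z)^{1/2}/(2\pi)^4$ by (\ref{def:K}). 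The crucial step is then the splitting $\exp\big(\frac{k_0^2}{4}\int_0^z C(\bq+\bzeta\frac{z'}{k_0})\,dz'\big)=1+\big[\exp(\cdots)-1\big]$, which separates the coherent (narrow-cone) contribution from the incoherent (broad-cone) one.

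For the term coming from the $1$, I would perform the two Gaussian integrals explicitly. Shifting $\bx=\bq+\bzeta\frac{z}{k_0}$ reduces the $\bq$-integral to $\int e^{-\frac{\xi_{\rm s}^2}{2}|\bx|^2-i\bxi\cdot\bx}\,d\bx=\frac{2\pi}{\xi_{\rm s}^2}e^{-|\bxi|^2/(2\xi_{\rm s}^2)}$, and the remaining $\bzeta$-integral is $\int e^{-\frac{r_0^2}{4}|\bzeta|^2+i\bzeta\cdot\br}\,d\bzeta=\frac{4\pi}{r_0^2}e^{-|\br|^2/r_0^2}$. Collecting the prefactors $\frac{r_0^2}{4\pi}\cdot\frac{1}{(2\pi)^4}\cdot\frac{2\pi}{\xi_{\rm s}^2}\cdot\frac{4\pi}{r_0^2}=\frac{1}{(2\pi)^3\xi_{\rm s}^2}$ yields exactly the first (coherent) term of (\ref{eq:meanwignersmooth}).

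For the term coming from $\big[\exp(\cdots)-1\big]$, the idea is to recognize the bracket as a Fourier transform. By the definition (\ref{def:A}), $A(z,\bxi',\bzeta)$ is, up to the factor $\frac{1}{2(2\pi)^2}$, the Fourier transform in $\bx$ of $\exp\big(\frac{k_0^2}{4}\int_0^z C(\bx+\frac{\bzeta}{k_0}z')\,dz'\big)-1$, so Fourier inversion gives, with $\bx=\bq$, the representation $\exp\big(\frac{k_0^2}{4}\int_0^z C(\bq+\frac{\bzeta}{k_0}z')\,dz'\big)-1=2\int A(z,\bxi',\bzeta)\,e^{i\bxi'\cdot\bq}\,d\bxi'$. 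Substituting this and interchanging the order of integration, I would again shift $\bx=\bq+\bzeta\frac{z}{k_0}$ and carry out the Gaussian $\bq$-integral, which now produces $\frac{2\pi}{\xi_{\rm s}^2}e^{-|\bxi'-\bxi|^2/(2\xi_{\rm s}^2)}$ together with a phase $-i\bxi'\cdot\bzeta\frac{z}{k_0}$; this phase combines with $i\bzeta\cdot\br$ into $i\bzeta\cdot(\br-\bxi'\frac{z}{k_0})$. Collecting the prefactor $\frac{r_0^2}{4\pi}\cdot\frac{1}{(2\pi)^4}\cdot2\cdot\frac{2\pi}{\xi_{\rm s}^2}=\frac{r_0^2}{(2\pi)^4\xi_{\rm s}^2}$ reproduces the second (incoherent) term of (\ref{eq:meanwignersmooth}).

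The computation is essentially bookkeeping; the one point requiring care is the interchange of the $\bq$- and $\bxi'$-integrals after inserting the Fourier representation of the bracket. This is justified by the integrability property of $A$ recorded just after the statement---namely that $\bxi'\mapsto A(z,\bxi',\bzeta)$ lies in $L^1(\RR^2)$ with $L^1$-norm bounded uniformly in $\bzeta$ and $z\in[0,Z]$---together with the Gaussian weights in $\bzeta$ and $\bq$, so that Fubini's theorem applies and all integrals converge absolutely under Hypothesis (\ref{hyp:h}). For completeness I would also note that the limiting integral we start from arises from the mean Wigner transform by Gaussian smoothing: the $\br$-smoothing at the scale $\eps r_{\rm s}$ degenerates to the identity as $\eps\to0$, while the $\bxi$-smoothing at the order-one scale $\xi_{\rm s}$ multiplies the $\bq$-integrand by $\exp\big(-\frac{\xi_{\rm s}^2}{2}|\bq+\bzeta\frac{z}{k_0}|^2\big)$, which is the only trace of the smoothing surviving in the limit.
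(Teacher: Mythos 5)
Your proposal is correct and follows exactly the route the paper intends: starting from the limiting double integral displayed just before the statement, factoring out $e^{-k_0^2C({\bf 0})z/4}=K(z)^{1/2}/(2\pi)^4$, splitting the exponential into $1+[\exp(\cdots)-1]$, and identifying the bracket with $2\int A(z,\bxi',\bzeta)e^{i\bxi'\cdot\bq}d\bxi'$ via Fourier inversion of (\ref{def:A}) — the same coherent/incoherent decomposition used for the unsmoothed mean Wigner transform in (\ref{eq:meanwigner2}). All Gaussian integrals, prefactors, and the Fubini justification via the uniform $L^1$-bound on $\bxi'\mapsto A(z,\bxi',\bzeta)$ check out.
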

The first term in (\ref{eq:meanwignersmooth}) is a narrow cone in $\bxi$ around $\bxi={\bf 0}$ corresponding to coherent wave components
and the second term is a broad cone in $\bxi$ corresponding to incoherent wave components.
Note that the expectation of the smoothed Wigner transform
is independent on $r_{\rm s}$ as the smoothing in $\br$ vanishes in the limit $\eps \to 0$. However the 
smoothing in $\br$ plays an important role in the control of the fluctuations of the Wigner transform.
We will analyze the variance of the smoothed Wigner transform
and  its dependence on the smoothing parameters $r_{\rm s}$ and~$\xi_{\rm s}$.

The second moment of the smoothed Wigner transform is
\begin{eqnarray*}
&&
\EE[ {W}^\eps_{\rm s} ( z,\br,\bxi)^2] =
\frac{1}{(2\pi)^2 \eps^4 r_{\rm s}^4} \iint \exp\Big( 
 -  \frac{|\br_{\rm s}|^2+|\br_{\rm s}'|^2}{2\eps^2 r_{\rm s}^2}  -\frac{\xi_{\rm s}^2 (|\bq|^2+|\bq'|^2)}{2} \Big)
\\
&&  \quad\times \mu \Big( \frac{z}{\eps} ,  
\frac{\bq+\bq'}{2}, 
\frac{\bq-\bq'}{2},
\frac{2\br+\br_{\rm s}+\br_{\rm s}'}{\eps},
\frac{\br_{\rm s}-\br_{\rm s}'}{\eps}\Big) \\
&& \quad \times  \exp\big(  - i \bxi \cdot (\bq+\bq') \big)
  d\bq d\bq' d\br_{\rm s} d\br_{\rm s}'  ,
\end{eqnarray*}
which gives, using   (\ref{eq:renormhatM2}),  (\ref{eq:fourier}),
 and   (\ref{def:MtildeMeps}):  
\begin{eqnarray*}
&&\EE \big[ {W}^\eps_{\rm s} ( z,\br,\bxi)^2\big] =
  \frac{1}{(2\pi)^6 \xi_{\rm s}^4 }   \iint \exp\Big( 
  - r_{\rm s}^2 |\bzeta_1|^2  -r_{\rm s}^2 |\bzeta_2|^2
 -\frac{|\bxi_1-2 \bxi|^2}{4 \xi_{\rm s}^2 }   -\frac{|\bxi_2 |^2}{4 \xi_{\rm s}^2} \Big)
\\
&& \quad \times \exp\Big(   2 i \frac{\bzeta_1 \cdot \br}{\eps} -i \frac{z}{k_0 \eps} \big(\bzeta_1\cdot \bxi_1+
 \bzeta_2\cdot \bxi_2  \big) \Big)\tilde{\mu}^\eps( z,\bxi_1,\bxi_2, \bzeta_1, \bzeta_2)
  d\bzeta_1 d\bzeta_2 d\bxi_1 d\bxi_2  .
\end{eqnarray*} 
Using Proposition \ref{prop:sci1} and the fact   that $A(z,-\bxi,-\bzeta)=A(z,\bxi,\bzeta)$,
we get the following proposition.
\begin{proposition}
\label{prop:smooth2}
Under Hypothesis (\ref{hyp:h}),
the second moment of the smoothed Wigner transform (\ref{def:widetildeWseps}) is, in the scintillation regime $\eps \to 0$:
\begin{eqnarray}
\nonumber
&& 
\EE \big[ {W}^\eps_{\rm s} ( z,\br,\bxi)^2 \big] \stackrel{\eps \to 0}{\longrightarrow}
\frac{K(z)}{(2\pi)^6  \xi_{\rm s}^4 } \exp \Big( -\frac{|\bxi|^2}{\xi_{\rm s}^2} \Big)
\exp \Big( -\frac{2 |\br|^2}{r_0^2} \Big)\\
\nonumber
&& +
 \frac{r_0^4 K(z)}{(2\pi)^8 \xi_{\rm s}^4 } \iint d\bxi_1 d\bzeta_1 
  e^{i \bzeta_1\cdot (2 \bx-\frac{z}{k_0} \bxi_1)
 -\frac{r_0^2 |\bzeta_1|^2}{2} - \frac{|\bxi_1-2\bxi|^2}{4 \xi_{\rm s}^2} }\\
\nonumber
 && \times \bigg\{
 4 e^{- \frac{|\bxi_1|^2}{4\xi_{\rm s}^2}}  \int e^{- i \frac{z}{k_0} \bxi_1 \cdot \bzeta_2  - \frac{r_0^2 |\bzeta_2|^2}{2}}
 A(z,\bxi_1,\bzeta_2+\bzeta_1) d\bzeta_2\\
\nonumber
&&  +   \iint e^{- \frac{|\bxi_2|^2}{4 \xi_{\rm s}^2} - i \frac{z}{k_0} \bxi_2 \cdot \bzeta_2  - \frac{r_0^2 |\bzeta_2|^2}{2}}
 A\big(z,\frac{\bxi_2+\bxi_1}{2},\bzeta_2+\bzeta_1\big) \\
\nonumber &&\hspace*{1.1in} \times 
A\big(z,\frac{\bxi_2-\bxi_1}{2},\bzeta_2-\bzeta_1\big) 
 d\bxi_2 d\bzeta_2\\
\nonumber
&&  + 
4 e^{-r_{\rm s}^2 |\bxi_1|^2 }  \int e^{- i \frac{z}{k_0} \bxi_1 \cdot \bxi_2  - \frac{r_0^2 |\bxi_2|^2}{2}}
 A(z,\bxi_1,\bxi_2+\bzeta_1) d\bxi_2\\
 \nonumber
 && +   \iint e^{-  r_{\rm s}^2 |\bzeta_2|^2  - i \frac{z}{k_0} \bxi_2 \cdot \bzeta_2  - \frac{r_0^2 |\bxi_2|^2}{2}}
 A\big(z,\frac{\bzeta_2+\bxi_1}{2},\bxi_2+\bzeta_1\big) \\
 &&\hspace*{1.1in} \times 
A\big(z,\frac{\bzeta_2-\bxi_1}{2},\bxi_2-\bzeta_1\big) 
 d\bxi_2 d\bzeta_2 \bigg\} .
\end{eqnarray}
\end{proposition}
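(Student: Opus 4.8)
The plan is to start from the integral representation of $\EE[{W}^\eps_{\rm s}(z,\br,\bxi)^2]$ against $\tilde{\mu}^\eps$ displayed just above the statement, and to substitute into it the expansion (\ref{eq:propsci11}) of Proposition \ref{prop:sci1}. Since this second moment is \emph{linear} in $\tilde{\mu}^\eps$, the substitution produces the seven explicit contributions coming from the terms of (\ref{eq:propsci11}) plus a contribution of the remainder $R^\eps$, and there are no cross terms to worry about. The remainder is disposed of first: the factor multiplying $\tilde{\mu}^\eps$ in the integrand is $(2\pi)^{-6}\xi_{\rm s}^{-4}$ times a Gaussian weight bounded by one times a pure phase, hence it has $L^\infty$-norm at most $(2\pi)^{-6}\xi_{\rm s}^{-4}$; by the $L^1$--$L^\infty$ pairing its integral against $R^\eps$ is bounded by $(2\pi)^{-6}\xi_{\rm s}^{-4}\|R^\eps(z,\cdot)\|_{L^1}$, which tends to zero uniformly on $[0,Z]$ by Proposition \ref{prop:sci1}. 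It then remains to pass to the limit $\eps\to0$ in each of the seven explicit integrals.

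For every term the mechanism is the same two-step reduction. A factor $\phi^\eps$ whose argument is \emph{not} coupled to a fast phase (for instance $\phi^\eps(\bxi_2)$, or $\phi^\eps((\bxi_1\pm\bxi_2)/\sqrt2)$) concentrates at the origin at scale $\eps$ and, in the limit, collapses to a Dirac mass that lowers the dimension of the integral; this collapse is legitimate because the rest of the integrand is continuous and, through the uniform $L^1$-bound on $A(z,\cdot,\bzeta)$ noted after Proposition \ref{prop:sci1}, dominated. The factors $\phi^\eps$ that do carry a fast phase are instead handled by the rescaling $\bzeta_j=\eps\bzeta_j'$: each such $\phi^\eps(\bzeta_j)\,d\bzeta_j$ becomes the order-one Gaussian $(r_0^2/2\pi)\exp(-r_0^2|\bzeta_j'|^2/2)\,d\bzeta_j'$, the phase $2i\bzeta_1\cdot\br/\eps$ becomes $2i\bzeta_1'\cdot\br$, the phases $-i(z/k_0\eps)\bzeta_j\cdot\bxi_j$ become order one, the frozen argument $(\bzeta_2\pm\bzeta_1)/\eps$ inside $A$ becomes $\bzeta_2'\pm\bzeta_1'$, and the slow weight $\exp(-r_{\rm s}^2|\bzeta_j|^2)$ tends to one. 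As a check, the leading term $K(z)\phi^\eps(\bxi_1)\phi^\eps(\bxi_2)\phi^\eps(\bzeta_1)\phi^\eps(\bzeta_2)$ collapses $\bxi_1,\bxi_2$ to the origin, producing $\exp(-|\bxi|^2/\xi_{\rm s}^2)$, while the $\bzeta$-rescaling produces the Gaussian Fourier integral $\int(r_0^2/2\pi)\exp(2i\bzeta_1'\cdot\br-r_0^2|\bzeta_1'|^2/2)\,d\bzeta_1'=\exp(-2|\br|^2/r_0^2)$ and a trivial factor one from $\bzeta_2'$; this reproduces the first term of the claimed formula, with the prefactor $K(z)/((2\pi)^6\xi_{\rm s}^4)$.

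The remaining six terms split into the two families seen in the statement. In the \emph{diagonal} family (the single-$A$ terms carrying $A(z,\cdot,(\bzeta_2\pm\bzeta_1)/\eps)$ and the double-$A$ term built from them) the factor $\phi^\eps((\bxi_1\pm\bxi_2)/\sqrt2)$ forces $\bxi_1=\pm\bxi_2$, so the free variable is a $\bxi$; only the $\xi_{\rm s}$-weight $\exp(-|\bxi_1|^2/(4\xi_{\rm s}^2))$ survives on it, giving the first two brace terms. In the \emph{cross} family (carrying $A(z,\cdot,(\bxi_2\pm\bzeta_1)/\eps)$) the concentrating factor is $\phi^\eps((\bxi_1\mp\bzeta_2)/\sqrt2)$, so the free variable identifies a $\bxi$ with a $\bzeta_2$; since $\bzeta_2$ carries the $r_{\rm s}$-smoothing, the weight $\exp(-r_{\rm s}^2|\bzeta_2|^2)$ now survives, producing the last two brace terms and explaining why the $r_{\rm s}$-dependence persists precisely in the incoherent cross contributions. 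In each family the two single-$A$ terms (indices $+$ and $-$) coincide after the symmetry $A(z,-\bxi,-\bzeta)=A(z,\bxi,\bzeta)$ together with a sign flip of a rescaled integration variable, and the normalisation $\delta(\cdot/\sqrt2)=2\,\delta(\cdot)$ from collapsing $\phi^\eps(\cdot/\sqrt2)$ supplies a further factor $2$, together yielding the overall factor $4$ on the single-$A$ brace terms; the double-$A$ terms (with $\phi^\eps(\bzeta_1)\phi^\eps(\bzeta_2)$ only) collapse no $\sqrt2$-normalised factor and so appear with coefficient one. Collecting the two $\bzeta$-Gaussian rescalings furnishes the prefactor $r_0^4K(z)/((2\pi)^8\xi_{\rm s}^4)$, and relabelling the rescaled and free variables to $(\bxi_1,\bzeta_1)$ and $(\bxi_2,\bzeta_2)$ gives exactly the claimed expression.

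I expect the principal difficulty to lie in the cross-pattern terms. There the variable collapsed to a Dirac mass and the variable rescaled by $\eps$ are interchanged relative to the diagonal terms, so the phase $-i(z/k_0\eps)(\bzeta_1\cdot\bxi_1+\bzeta_2\cdot\bxi_2)$ must be rearranged with care to confirm that, after rescaling, every surviving oscillation is order one and appears as an explicit phase factor $\exp(-i\tfrac{z}{k_0}\bxi_1\cdot\bxi_2)$ (or the analogous one) multiplying $A$, consistently with the internal $z'$-structure in the definition (\ref{def:A}) of $A$, and that no uncontrolled residual phase survives to oscillate a contribution to zero. Keeping the Jacobians of the linear changes $(\bxi_1,\bxi_2)\mapsto(\text{sum},\text{difference})$ consistent with the $\sqrt2$-normalisation inside the $\phi^\eps$'s, and matching the resulting numerical prefactors, is the remaining and essentially mechanical bookkeeping.
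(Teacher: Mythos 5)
Your proposal is correct and follows essentially the same route as the paper, which simply inserts the expansion of Proposition \ref{prop:sci1} into the exact integral representation of $\EE[{W}^\eps_{\rm s}(z,\br,\bxi)^2]$ against $\tilde{\mu}^\eps$, kills the remainder by the $L^1$--$L^\infty$ pairing, and evaluates the seven explicit terms by concentration of the $\phi^\eps$ factors and the $\bzeta=\eps\bzeta'$ rescaling, using $A(z,-\bxi,-\bzeta)=A(z,\bxi,\bzeta)$ to merge the $\pm$ pairs. Your bookkeeping of the factors of $4$ (one factor $2$ from $\int\phi^\eps(\cdot/\sqrt{2})=2$ and one from the symmetry) and of the surviving $r_{\rm s}$-weight in the cross terms matches the stated formula.
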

This is an exact expression but, as it involves four-dimensional integrals, it is complicated to interpret it.
This expression becomes simple in the strongly scattering regime $k_0^2 C( {\bf 0} )z\gg 1$, because then 
$A(z,\bxi,\bzeta)$ takes a Gaussian form and all integrals can be evaluated.
To get  more explicit expressions in the discussion of the results  we here again assume 
 that $C(\bx)$ can be expanded as (\ref{eq:expandC}).
When $k_0^2 C({\bf 0}) z \gg 1$, we have
\begin{eqnarray}
\nonumber
\EE\big[ {W}^\eps_{\rm s} ( z,\br,\bxi)\big]  &\stackrel{\eps \to 0}{\longrightarrow}& 
\frac{8 \pi }{k_0^2 \gamma z } \frac{r_0^2}{ (r_0^2 +\frac{\gamma z^3}{24} )(1+\frac{4 \xi_{\rm s}^2}{k_0^2 \gamma z})
+\frac{z^2 \xi_{\rm s}^2}{2k_0^2}}
\\
&& 
\times \exp \bigg(  - \frac{\Big| \br - \frac{z \bxi}{2k_0 ( 1 +\frac{4 \xi_{\rm s}^2}{k_0^2 \gamma z})} \Big|^2}
{r_0^2  +\frac{\gamma z^3}{24} + \frac{ \frac{z^2\xi_{\rm s}^2}{2k_0^2}}{1+\frac{4 \xi_{\rm s}^2}{k_0^2 \gamma z} }}
- \frac{2 |\bxi|^2}{k_0^2 \gamma z + 4 \xi_{\rm s}^2} \bigg)
\end{eqnarray}
and
\begin{eqnarray}
\nonumber
\EE \big[ {W}^\eps_{\rm s} ( z,\br,\bxi)^2\big]  &\stackrel{\eps \to 0}{\longrightarrow}& 
\lim_{\eps \to 0}\EE\big[ {W}^\eps_{\rm s} ( z,\br,\bxi)\big] ^2
\left( 1 +
  \frac{
(r_0^2 +\frac{\gamma z^3}{24} )(1
+\frac{4\xi_{\rm s}^2 }{k_0^2 \gamma z})
+\frac{z^2 \xi_{\rm s}^2}{2k_0^2}
}{ (r_0^2 +\frac{\gamma z^3}{24} )(4r_{\rm s}^2\xi_{\rm s}^2 
+\frac{4\xi_{\rm s}^2 }{k_0^2 \gamma z})
+\frac{z^2 \xi_{\rm s}^2}{2k_0^2}} 
\right) .
\end{eqnarray}

The coefficient of variation ${C}^\eps_{\rm s}$ of the smoothed Wigner transform is defined by:
\begin{equation}
{C}_{\rm s}^\eps (z, \br,\bxi) := \frac{\sqrt{ \EE[ {W}^\eps_{\rm s} ( z,\br,\bxi)^2]  -\EE[ {W}^\eps_{\rm s} ( z,\br,\bxi)]^2}}{\EE[ {W}^\eps_{\rm s} ( z,\br,\bxi)]} .
\end{equation}
We then get the following expression for the coefficient of variation in the strongly scattering regime
$k_0^2 C({\bf 0} )z \gg 1$.

\begin{corollary}
Under the same hypotheses as in Propositions \ref{prop:smooth1}
 and \ref{prop:smooth2},
if additionally $k_0^2 C({\bf 0} )z \gg 1$ and $C$ can be expanded as (\ref{eq:expandC}), 
then the coefficient of variation  of the smoothed Wigner transform  (\ref{def:widetildeWseps}) satisfies
\begin{equation}
{C}^\eps_{\rm s}(z,\br,\bxi) \stackrel{\eps \to 0}{\longrightarrow}
\left( \frac{
(r_0^2 +\frac{\gamma z^3}{24} )(1
+\frac{4\xi_{\rm s}^2 }{k_0^2 \gamma z})
+\frac{z^2 \xi_{\rm s}^2}{2k_0^2}
}{ (r_0^2 +\frac{\gamma z^3}{24} )(4r_{\rm s}^2\xi_{\rm s}^2 
+\frac{4\xi_{\rm s}^2 }{k_0^2 \gamma z})
+\frac{z^2 \xi_{\rm s}^2}{2k_0^2}}  \right)^{1/2}
=
\left( \frac{  \frac{1}{\xi_{\rm s}^2 \rho_z^2} +1 }
 { \frac{ 4 r_{\rm s}^2}{\rho_z^2} + 1 }  \right)^{1/2}
,
\label{eq:Cepsstrong}
\end{equation}  
where $\rho_z$ is the correlation radius (\ref{eq:corrradius}). 
\end{corollary}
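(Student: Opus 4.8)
The plan is to exploit the fact that the substantive analytic work has already been carried out in the two displays immediately preceding the statement, which give, in the strongly scattering regime $k_0^2 C({\bf 0})z \gg 1$ under the expansion (\ref{eq:expandC}), explicit limits for both $\EE[{W}^\eps_{\rm s}]$ and $\EE[({W}^\eps_{\rm s})^2]$. The key structural feature is that the limiting second moment is written as the square of the limiting mean times a factor $1+F$, namely
$$
\lim_{\eps\to 0}\EE\big[ {W}^\eps_{\rm s}(z,\br,\bxi)^2\big] = \Big(\lim_{\eps\to 0}\EE\big[ {W}^\eps_{\rm s}(z,\br,\bxi)\big]\Big)^2 (1 + F), \quad F := \frac{(r_0^2 +\frac{\gamma z^3}{24})(1+\frac{4\xi_{\rm s}^2}{k_0^2\gamma z})+\frac{z^2\xi_{\rm s}^2}{2k_0^2}}{(r_0^2 +\frac{\gamma z^3}{24})(4r_{\rm s}^2\xi_{\rm s}^2+\frac{4\xi_{\rm s}^2}{k_0^2\gamma z})+\frac{z^2\xi_{\rm s}^2}{2k_0^2}}.
$$

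First I would substitute these two limits into the definition of the coefficient of variation. Since the variance is the second moment minus the square of the mean, the displayed factorization yields $\mathrm{Var} = (\lim_{\eps\to 0}\EE[{W}^\eps_{\rm s}])^2\, F$, so the mean cancels entirely between numerator and denominator and one obtains ${C}^\eps_{\rm s}(z,\br,\bxi)\stackrel{\eps\to 0}{\longrightarrow}\sqrt{F}$, which is exactly the first expression in (\ref{eq:Cepsstrong}). Note in passing that the limiting coefficient of variation is independent of $\br$ and $\bxi$: the Gaussian prefactors in $\br,\bxi$ are common to both moments and drop out of the ratio, a cancellation guaranteed by the factorized form recorded above.

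The remaining task is the purely algebraic identity relating $\sqrt{F}$ to the intensity correlation radius $\rho_z$ of (\ref{eq:corrradius}). I would abbreviate $a := r_0^2 + \frac{\gamma z^3}{24}$ and use the elementary identity $r_0^2 + \frac{\gamma z^3}{6} = a + \frac{\gamma z^3}{8}$, so that (\ref{eq:corrradius}) becomes $\rho_z^2 = \frac{4}{k_0^2\gamma z}\big(1+\frac{\gamma z^3}{8a}\big) = \frac{4}{k_0^2\gamma z} + \frac{z^2}{2a k_0^2}$. Multiplying this by $a\xi_{\rm s}^2$ then reproduces precisely the numerator and denominator of $F$:
$$
a\xi_{\rm s}^2\Big(\frac{1}{\xi_{\rm s}^2}+\rho_z^2\Big) = a + \frac{4a\xi_{\rm s}^2}{k_0^2\gamma z} + \frac{z^2\xi_{\rm s}^2}{2k_0^2}, \qquad a\xi_{\rm s}^2\big(4r_{\rm s}^2+\rho_z^2\big) = 4ar_{\rm s}^2\xi_{\rm s}^2 + \frac{4a\xi_{\rm s}^2}{k_0^2\gamma z} + \frac{z^2\xi_{\rm s}^2}{2k_0^2}.
$$
Hence $F = \big(\frac{1}{\xi_{\rm s}^2}+\rho_z^2\big)\big/\big(4r_{\rm s}^2+\rho_z^2\big)$, and dividing top and bottom by $\rho_z^2$ gives the stated closed form, completing the identification.

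The honest assessment is that there is no real obstacle in the corollary itself: all the difficulty—solving the fourth-moment equation through Proposition \ref{prop:sci1} and reducing the four-dimensional integrals of Proposition \ref{prop:smooth2} to Gaussians in the regime $k_0^2 C({\bf 0})z\gg 1$—is already absorbed into the two preceding displays. What remains is a one-line cancellation of the mean followed by a bookkeeping verification of the correlation-radius identity. The only point requiring care is to confirm that the $\br$- and $\bxi$-dependent Gaussian prefactors genuinely coincide between the mean and the second moment so that they cancel cleanly in the ratio; this is exactly what the factorized structure $\EE[({W}^\eps_{\rm s})^2]=\EE[{W}^\eps_{\rm s}]^2(1+F)$ asserts, and I would verify it by matching those prefactors term by term in the two displays above.
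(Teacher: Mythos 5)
Your proposal is correct and follows exactly the route the paper intends: the corollary is an immediate consequence of the two preceding displays (the factorization $\lim\EE[(W^\eps_{\rm s})^2]=\lim\EE[W^\eps_{\rm s}]^2(1+F)$ makes the mean cancel in the coefficient of variation, leaving $\sqrt{F}$), and your algebraic verification that $F=\big(\tfrac{1}{\xi_{\rm s}^2\rho_z^2}+1\big)/\big(\tfrac{4r_{\rm s}^2}{\rho_z^2}+1\big)$ via $r_0^2+\tfrac{\gamma z^3}{6}=\big(r_0^2+\tfrac{\gamma z^3}{24}\big)+\tfrac{\gamma z^3}{8}$ checks out. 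Nothing further is needed.
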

Note that the coefficient of variation
becomes independent of $\br$ and $\bxi$.
Eq.~(\ref{eq:Cepsstrong}) 
 is a simple enough formula to help determining the smoothing parameters $\xi_{\rm s}$ and $r_{\rm s}$
that are needed to reach a given value for the coefficient of variation.
The coefficient of variation is plotted in Figure \ref{fig:2}, which exhibits the line $2 \xi_{\rm s} r_{\rm s} =1$
separating the two regions where the coefficient of variation is larger or smaller than one.

\begin{figure}
\begin{center}
\begin{tabular}{c}
\includegraphics[width=7.0cm]{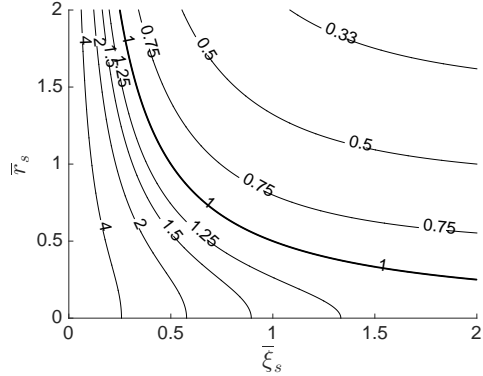}
\end{tabular}
\end{center}
\caption{Contour levels of the coefficient of variation (\ref{eq:Cepsstrong}) of the smoothed Wigner transform.
Here $ \overline{r}_{\rm s} = r_{\rm s} /\rho_z$
and $ \overline{\xi}_{\rm s} = \xi_{\rm s} \rho_z$.
The contour level $1$ is $2\overline{\xi}_{\rm s} \overline{r}_{\rm s} =1$.
\label{fig:2} 
}
\end{figure}

For $2 \xi_{\rm s} r_{\rm s} =1$, we have $\lim_{\eps \to 0} {C}^\eps_{\rm s}(z,\br,\bxi)=1$. 
For $2 \xi_{\rm s} r_{\rm s} <1$ (resp. $>1$) we have $\lim_{\eps \to 0} {C}^\eps_{\rm s}(z,\br,\bxi)>1$ (resp. $<1$).
The curve   $2 \xi_{\rm s} r_{\rm s} =1$ determines the region where the coefficient of variation
of ${W}^\eps_{\rm s} ( z,\br,\bxi)$ is smaller or larger than one (in the limit $\eps \to 0$).
The critical value $r_{\rm s}=1/(2\xi_{\rm s})$ is indeed special.
In this  case, the smoothed Wigner transform (\ref{def:widetildeWseps}) can be written as the double convolution
of the Wigner transform $W^\eps$ of the random field $u(\frac{z}{\eps},\cdot)$ 
with the Wigner transform 
$$
W^\eps_{\rm g} (\br,\bxi) := \int \exp \big( -i \bxi \cdot \bq \big) u_{\rm g} \big(\frac{\br}{\eps} +\frac{\bq}{2}\big)
\overline{u_{\rm g}} \big(\frac{\br}{\eps} -\frac{\bq}{2}\big) d\bq
$$
of the Gaussian state 
$$
u_{\rm g}(\bx)  := \exp\big( - \xi_{\rm s}^2  |\bx|^2 \big) ,
$$
since we have
$$
{W}^\eps_{\rm g} (\br,\bxi) =  \frac{2 \pi }{\xi_{\rm s}^2 }  \exp \Big( - 2 \frac{\xi_{\rm s}^2 |\br|^2}{\eps^2}  - \frac{ |\bxi|^2}{ 2\xi_{\rm s}^2} \Big) ,
$$
and therefore
$$
{W}^\eps_{\rm s} (z,\br,\bxi)  
 = \frac{4\xi_{\rm s}^2}{(2\pi)^3 \eps^2} \iint 
W^\eps(z,\br-\br',\bxi-\bxi') W^\eps_{\rm g} (\br',\bxi') d\br' d\bxi' ,
$$
for $r_{\rm s}=1/(2\xi_{\rm s})$.
It is known that the convolution of a Wigner transform with a kernel that is itself the Wigner transform 
of a function  (such as a Gaussian) is nonnegative real valued 
(the smoothed Wigner transform obtained with the Gaussian $W^\eps_{\rm g}$
is sometimes called Husimi function)
\cite{cartwright76,manfredi00}.
This can be shown easily in our case as the smoothed Wigner transform can be written as  
\begin{equation}
\label{eq:Wepsrsqs}
{W}^\eps_{\rm s} (z,\br,\bxi)     = \frac{2\xi_{\rm s}^2}{\pi  } \Big| \int
\exp \big(   i  {\bxi\cdot \br'} \big) \overline{u_{\rm g}} \big( \br' \big)
u \big( \frac{z}{\eps}, \frac{\br}{\eps}  - \br'\big) d\br' \Big|^2 ,
\end{equation} 
for $r_{\rm s}=1/(2\xi_{\rm s})$.
From this representation formula of ${W}^\eps_{\rm s} $ valid for $r_{\rm s}=1/(2 \xi_{\rm s})$,
we can see that it is the square modulus of a linear functional of $u(\frac{z}{\eps},\cdot)$.
The physical intuition that  $u(\frac{z}{\eps},\cdot)$ has circularly symmetric complex Gaussian statistics 
in strongly scattering media then predicts
that ${W}^\eps_{\rm s} (z,\br,\bxi)$ should have an exponential (or Rayleigh) distribution, because
the sum of the squares of two independent real-valued Gaussian random variables has an exponential
distribution. This is indeed consistent with our theoretical finding that $\lim_{\eps \to 0} {C}^\eps_{\rm s}=1$
for $r_{\rm s}=1/(2 \xi_{\rm s})$.  
In fact the situation with complex scattering giving a field that has centered circularly symmetric
Gaussian statistics is exactly what motivates the name ``scintillation regime''  with
unit relative intensity fluctuations.

If $  r_{\rm s}> 1/(2\xi_{\rm s})$, by observing  that 
$$
\exp \Big( - \frac{|\br|^2}{ 2 \eps^2 r_{\rm s}^2} \Big) =
\Psi^\eps (\br) *_\br
\exp \Big( - \frac{2 \xi_{\rm s}^2 |\br|^2}{\eps^2} \Big) ,
$$
where $*_\br$ stands for the convolution product in $\br$:
$$
\Psi^\eps (\br) *_\br f(\br) = \int \Psi^\eps (\br-\br')   f(\br') d\br'  ,
$$
and the function $\Psi^\eps$ is defined by
$$
\Psi^\eps  (\br) :=  \frac{ 8 \xi_{\rm s}^4 r_{\rm s}^2}{\pi \eps^2 ( 4 \xi_{\rm s}^2 r_{\rm s}^2 -1)}
\exp \Big( - \frac{2 \xi_{\rm s}^2 |\br|^2}{(4 \xi_{\rm s}^2 r_{\rm s}^2-1) \eps^2} \Big)  ,
$$
we observe that the smoothed Wigner transform (\ref{def:widetildeWseps}) can be expressed as:
\begin{equation}
{W}^\eps_{\rm s} (z,\br,\bxi)   = 
\Psi^\eps(\br) *_\br \left(
\frac{2\xi_{\rm s}^2}{\pi  } \Big| \int
\exp \big(   i  {\bxi \cdot \br'} \big) \overline{u_{\rm g}} \big( \br' \big)
u \big( \frac{z}{\eps}, \frac{\br}{\eps}  - \br'\big) d\br' \Big|^2 \right) ,
\end{equation} 
for $r_{\rm s}>1/(2\xi_{\rm s})$.
From this representation formula for  ${W}^\eps_{\rm s} $ valid for $r_{\rm s}>1/(2 \xi_{\rm s})$,
we can see that it is nonnegative valued and that it is a local average of (\ref{eq:Wepsrsqs}),
which has a unit coefficient of variation in the strongly scattering scintillation regime.
That is why the coefficient of variation of the smoothed Wigner transform
is smaller than one when $  r_{\rm s}> 1/(2\xi_{\rm s})$.

Finally, 
it is possible to take $r_{\rm s}=0$ in (\ref{def:widetildeWseps}), which corresponds to the absence of smoothing in $\br$:
$$
{W}^\eps_{\rm s}(z,\br,\bxi)  = \frac{1}{2\pi \xi_{\rm s}^2} \int W^\eps(z,\br,\bxi-\bxi') 
\exp \Big( -\frac{|\bxi'|^2}{2 \xi_{\rm s}^2} \Big) d\bxi' ,
$$
for $r_{\rm s}=0$.
We then get 
\begin{eqnarray*}
\nonumber
{\rm Var}\big( {W}^\eps_{\rm s} ( z,\br,\bxi) \big) &
 \stackrel{\eps \to 0}{\longrightarrow} 
&
 \frac{ \left( \frac{8 \pi  r_0^2 }{k_0^2 \gamma z } \right)^2}
{ \left( (r_0^2 +\frac{\gamma z^3}{24} )(1+\frac{4 \xi_{\rm s}^2}{k_0^2 \gamma z})
+\frac{z^2 \xi_{\rm s}^2}{2k_0^2}\right)   \left( (r_0^2 +\frac{\gamma z^3}{24} )(\frac{4 \xi_{\rm s}^2}{k_0^2 \gamma z})
+\frac{z^2 \xi_{\rm s}^2}{2k_0^2}\right)   }  \\
&&
\times  \exp \bigg(  - \frac{2 \Big| \br - \frac{z \bxi}{2k_0 ( 1 +\frac{4 \xi_{\rm s}^2}{k_0^2 \gamma z})} \Big|^2}
{r_0^2  +\frac{\gamma z^3}{24} + \frac{ \frac{z^2\xi_{\rm s}^2}{2k_0^2}}{1+\frac{4 \xi_{\rm s}^2}{k_0^2 \gamma z} }}
- \frac{4 |\bxi|^2}{k_0^2 \gamma z + 4 \xi_{\rm s}^2} \bigg)
\end{eqnarray*}
and
$$
{C}^\eps_{\rm s}(z,\br,\bxi)   \stackrel{\eps \to 0}{\longrightarrow}
  \sqrt{ 1 +     (\xi_{\rm s} \rho_z)^{-2} }  ,
$$ 
for $r_{\rm s}=0$.
If, additionally, we let $\xi_{\rm s} \to \infty$, then we find
\begin{eqnarray*}
&&\lim_{ \xi_{\rm s} \to \infty}  \lim_{\eps \to 0}
 \frac{\xi_{\rm s}^2}{2\pi}  \EE \big[ {W}^\eps_{\rm s} ( z,\br,\bxi) \big]  =
\frac{r_0^2}{ r_0^2 +\frac{\gamma z^3}{6} }  
\exp \Big(  - \frac{ | \br |^2}
{r_0^2  +\frac{\gamma z^3}{6}  }
 \Big)  ,\\
&&\lim_{ \xi_{\rm s} \to \infty} \lim_{\eps \to 0} \Big( \frac{\xi_{\rm s}^2}{2\pi} \Big)^2 
{\rm Var} \big( {W}^\eps_{\rm s} ( z,\br,\bxi) \big) =
\Big( \frac{r_0^2}{ r_0^2 +\frac{\gamma z^3}{6} } \Big)^2
\exp \Big(  - \frac{2 | \br |^2}
{r_0^2  +\frac{\gamma z^3}{6}  }
 \Big)  ,
\end{eqnarray*}
and also
$$
\lim_{ \xi_{\rm s} \to \infty}  \lim_{\eps \to 0}{C}^\eps_{\rm s}(z,\br,\bxi)   =1,
$$
for $r_{\rm s}=0$.
These results are consistent with formulas (\ref{eq:corfieldplane}-\ref{eq:corintplane}) (with $\bq={\bf 0}$) 
and the fact that
$$
\big|  {u} \big(\frac{z}{\eps},\frac{\br}{\eps}\big) \big|^2 = \frac{1}{(2\pi)^2}  \int W^\eps(z,\br,\bxi') d\bxi'
=\lim_{\xi_{\rm s} \to \infty} \frac{\xi_{\rm s}^2}{2\pi} {W}^\eps_{\rm s}(z,\br,\bxi)\mid_{r_{\rm s}=0} .
$$
This shows that the limits $\xi_{\rm s} \to \infty$ and $\eps \to 0$ are exchangeable.

\section{Conclusions}
In this paper we have considered the white-noise paraxial wave model and computed 
the second and fourth-order moments of the field.
In the regime in which the correlation length of the medium is smaller than the initial
beam width, the moments exhibit a multi-scale behavior  with components
varying at these two scales. Our novel  characterization of the solution
of the fourth-order moment equation allows us to solve important questions:
in this paper we have proved that the fourth-order centered moment of the
field satisfies the Gaussian summation rule, 
we analyzed the correlation function of the intensity distribution, and 
we have computed the variance of the smoothed Wigner transform of the transmitted field.
In particular we have characterized quantitatively the amount of smoothing necessary
to get a statistically stable smoothed Wigner transform.
We believe   that our main result can find many other applications, for instance for the 
stability of time-reversal experiments \cite{blomgren,PRS04}
or the stability of correlation-based imaging techniques
in the paraxial regime \cite{dehoopsolna09,dehoopgarnier13}.

\section*{Acknowledgements}
This work is partly supported by  AFOSR grant  \# FA9550-11-1-0176.

\appendix

\section{The Scintillation Regime for the Wave Equation}
\label{app:a}%
In Section \ref{sec:regime} we  address a scaling regime which can be considered
as a particular case of the paraxial white-noise regime:  the scintillation
regime.  This corresponds to a situation in which the relative intensity fluctuations are
of order one and it is an important regime to capture from the physical viewpoint.
We explain in this appendix the conditions for the  validity of this regime
in the context of the wave equation (\ref{eq:helm}).

Let $\sigma$ be the standard deviation of the fluctuations of the index of refraction
$n$ in (\ref{eq:helm}).   Moreover, let
$l_{\rm c}$ be the correlation length of the fluctuations of the index of refraction, $\lambda_0$ 
be the carrier wavelength (equal to $2\pi /k_0$), $L$ be the typical propagation distance, and $r_0$
be the radius of the initial transverse beam/source.
In this framework the variance $C({\bf 0})$ of the Brownian field in the It\^o-Schr\"odinger equation
(\ref{eq:model}) is of order $\sigma^2 l_{\rm c}$ and the transverse scale of variation of the covariance function
$C(\bx)$ in (\ref{defB}) is of order $l_{\rm c}$.

We next  discuss the scintillation scaling regime in more detail.   
First,  we consider the primary scaling that leads to
the canonical white-noise Schr\"odinger equation (\ref{eq:model}), which corresponds 
to zooming in on a high-frequency beam that propagates
over a distance that is large relative to the medium correlation length, which is itself large
relative to the wavelength.
Moreover, the medium fluctuations are relatively small. 
Explicitly, we assume the primary scaling  when
\begin{eqnarray*}
   \frac{l_{\rm c}}{r_0} \sim 1  \, ,   \quad\quad
      \frac{l_{\rm c}}{L}  \sim   \theta\, ,    \quad \quad
     \frac{l_{\rm c}}{\lambda_0}     \sim    \theta^{-1} \,   ,     \quad  \quad
     \sigma^2 \sim \theta^3 \,  , 
\end{eqnarray*}
where $\theta$ is a small dimensionless parameter.
We introduce dimensionless coordinates by:
\begin{eqnarray*}
\bx = { l_{\rm c} }  { \bx' }  , \quad  \quad  
z =  { l_{\rm c} } z', \quad\quad  
k_0 = \frac{k_0' }{  l_{\rm c} \theta }, \quad \quad  
 \nu ( l_{\rm c} z',  l_{\rm c} \bx') = \theta^{3/2}  \nu' \left(z' ,  \bx'  \right)   .
\end{eqnarray*}
Then dropping  ``primes''  we find that in  dimensionless coordinates
the Helmholtz equation reads
$$
(\partial_z^2+\Delta_\bx) v^\theta + \frac{k_0^2}{\theta^2} \Big(1 
+ \theta^{3/2} \nu (z,\bx) \Big) {v}^\theta= 0 .
$$  
We look for the behavior of the slowly-varying envelope $u^\theta$
for long propagation distances of the order of $\theta^{-1}$:
$$
 {v}^\theta \Big(\frac{z}{\theta},\bx \Big)  = \exp\Big( i \frac{k_0 z}{\theta^2} \Big)
u^\theta ( z, \bx  ) 
$$
that satisfies (by the chain rule)
$$
{\theta^2} \partial_{z}^2 u^\theta
+ 
\left( 2 i k_0 \partial_z 
u^\theta+ \Delta_\bx u^\theta +  \frac{k_0^2}{\theta^{1/2}}  \nu\Big(  \frac{z}{\theta} , \bx\Big) 
 u^\theta \right)=0  .
$$
Heuristically, when $\theta \ll 1$ the backscattering term ${\theta^2} \partial_{z}^2  u^\theta$
can be neglected and we obtain a Schr\"odinger-type equation 
in which the potential fluctuates in $z$ on the  
scale $\theta$  and is of amplitude
$ \theta^{-1/2}$. 
This diffusion approximation scaling gives the Brownian field
and the model (\ref{eq:model}):
$$
2 i k_0 d
u + \Delta_\bx u \, dz +   k_0^2 
 u\circ dB(z,\bx) .
$$
This heuristic derivation can be made rigorous as shown in \cite{garniers0,garniers1,garniers2}.

In Section  \ref{sec:regime} we address the subsequent scaling regime in which the correlation 
length of the medium $l_{\rm c}$ is smaller than the initial beam radius $r_0$.
Moreover, the medium fluctuations are relatively weak, and the beam propagates deep
into the medium. We  then get the modified scaling picture
\begin{equation}
\label{eq:scaling app}
    \frac{l_{\rm c}}{r_0} \sim  \eps  \, ,  \quad\quad
     \frac{l_{\rm c}}{L}  \sim   \theta \eps \, ,   \quad\quad
     \frac{l_{\rm c}}{\lambda_0} \sim \theta^{-1}   \,   ,    \quad\quad
     \sigma^2 \sim \theta^3 \eps \,      ,
\end{equation}
and we assume $  \theta \ll \eps \ll 1$.
This means that the paraxial white-noise limit $\theta \to 0$ is taken first, 
and we find
$$
2ik_0 d {u}^\eps      
    +\Delta_{\bx}   {u}^\eps \, dz
   +  k_0^2   {u}^\eps   \circ  d{B}^\eps(z,\bx) 
 =0 , 
$$
where the radius $r_0$ of the initial condition is of  order $\eps^{-1}$,  
the variance $C^\eps({\bf 0})$ of the Brownian field $B^\eps$ is of order $\eps$,
and the propagation distance $L$ is of order $\eps^{-1}$.  
Then the limit  $\eps\to 0$ is applied, corresponding to the scintillation regime. 
In the regime (\ref{eq:scaling app}) the effective strength $k_0^2 C^\eps({\bf 0}) L$ of the 
Brownian field is of order one since $  \sigma^2 l_{\rm c} L/\lambda_0^2   \sim 1$.
Moreover,  $L \lambda_0/ r_0^2$ is of order $\eps$.
That is,   the typical propagation distance is smaller than the Rayleigh length of the initial beam.
Here the Rayleigh length corresponds to the distance when
the transverse radius of the beam has roughly  doubled by diffraction
in the homogeneous medium case and it  is given by $r_0^2/\lambda_0$.
Indeed,  it is seen  in Section \ref{sec:regime} that the propagation distance at which relevant phenomena arise 
in the random case is of the order
of $r_0 l_{\rm c} / \lambda_0$,  which is smaller than the Rayleigh distance $r_0^2/\lambda_0$.

\section{Proof of Proposition \ref{prop:sci1}}
\label{app:b}%
Let $Z>0$.
For any $z \in [0,Z]$ the linear operator ${\cal L}^\eps_{z}$ is bounded from $L^1(\RR^2\times \RR^2 \times \RR^2\times \RR^2) $
into itself and (as in Lemma \ref{lem:normLz})
$$
\sup_{z\leq Z}   \|   {\cal L}^\eps_z    \|_{L^1 \to L^1}  \leq 2 k_0^2 C({\bf 0}) , 
$$
uniformly in $\eps$.
We denote
\begin{eqnarray}
\label{def:Reps}
 R^\eps (z,\bxi_1,\bxi_2, \bzeta_1,\bzeta_2 )  &=&
 \tilde{\mu}^\eps (z,\bxi_1,\bxi_2, \bzeta_1, \bzeta_2 )  -  N^\eps (z,\bxi_1,\bxi_2, \bzeta_1, \bzeta_2 )\\
\nonumber
 N^\eps (z,\bxi_1,\bxi_2, \bzeta_1, \bzeta_2 ) &=&
K(z)
\phi^\eps ( \bxi_1 )
\phi^\eps ( \bxi_2 )
\phi^\eps ( \bzeta_1 )
\phi^\eps ( \bzeta_2 ) \\
\nonumber
&& 
 + \phi^\eps \big( \frac{\bxi_1-\bxi_2}{\sqrt{2}}\big)\phi^\eps ( \bzeta_1 ) \phi^\eps ( \bzeta_2 )
\tilde{A}\big(z, \frac{\bxi_2+\bxi_1}{2} ,\frac{\bzeta_2+ \bzeta_1}{\eps} \big)  
\\
\nonumber
&&+
\phi^\eps \big( \frac{\bxi_1+\bxi_2}{\sqrt{2}}\big)
\phi^\eps ( \bzeta_1 )\phi^\eps ( \bzeta_2 )
\tilde{A} \big(z, \frac{\bxi_2-\bxi_1}{2} ,\frac{\bzeta_2- \bzeta_1}{\eps} \big) \\
\nonumber
&&  +
\phi^\eps \big( \frac{\bxi_1-\bzeta_2}{\sqrt{2}}\big)
\phi^\eps ( \bzeta_1 )  \phi^\eps ( \bxi_2 )
\tilde{A}\big(z, \frac{\bzeta_2+\bxi_1}{2} ,\frac{\bxi_2+ \bzeta_1}{\eps}  \big) 
\\
\nonumber
&& +
\phi^\eps \big( \frac{\bxi_1+\bzeta_2}{\sqrt{2}}\big)
\phi^\eps ( \bzeta_1 ) \phi^\eps ( \bxi_2 )
\tilde{A}\big(z, \frac{\bzeta_2-\bxi_1}{2} ,\frac{\bxi_2-\bzeta_1}{\eps} \big) \\
\nonumber
&&   + \phi^\eps ( \bzeta_1 )\phi^\eps (\bzeta_2) 
\tilde{B} \big( z, \frac{\bxi_2+\bxi_1}{2}, \frac{\bxi_2-\bxi_1}{2} , \frac{\bzeta_1}{\eps}, \frac{\bzeta_2}{\eps}\big) \\
&&   
+
  \phi^\eps ( \bzeta_1 ) \phi^\eps (\bxi_2)
\tilde{B} \big( z, \frac{\bzeta_2+\bxi_1}{2}, \frac{\bzeta_2-\bxi_1}{2},\frac{\bzeta_1}{\eps}, \frac{\bxi_2}{\eps}   \big) .
\label{def:Neps}
\end{eqnarray}

Here (using the definitions (\ref{def:K}) and (\ref{def:A})):\\
- The function $K(z) = (2\pi)^8 \exp (- \frac{k_0^2}{2} C({\bf 0}) z ) $  is the solution of the equation
$$
\frac{\partial K}{\partial z}= \frac{k_0^2}{4( 2\pi)^2} \int \hat{C}(\bk) \big[ -2 K \big]  d\bk  ,
$$
starting from $K(z=0)  =(2\pi)^8$.\\
- The function 
$$
\tilde{A}(z,\bxi,\bzeta) = K(z) A(z,\bxi,\bzeta)
$$
 is the solution of the equation (in which $\bzeta$ is frozen)
$$
\frac{\partial \tilde{A}}{\partial z} =
\frac{k_0^2}{4( 2\pi)^2} \int \hat{C}(\bk) \Big[ \tilde{A}(z,\bxi-\bk,\bzeta) e^{ \frac{i z}{k_0} \bk \cdot \bzeta} - 2 \tilde{A} (z,\bxi,\bzeta) \Big] d\bk
+ \frac{k_0^2}{8(2\pi)^2} \hat{C} (\bxi) K(z)  e^{ i \frac{ z}{k_0} \bxi \cdot \bzeta} ,
$$
starting from $\tilde{A}(z=0,\bxi,\bzeta)  =0$. By Gronwall's inequality 
$\| \tilde{A}(z,\cdot,\bzeta) \|_{L^1}$
is bounded by
\begin{equation}
\label{eq:boundtildeA}
\| \tilde{A}(z,\cdot,\bzeta) \|_{L^1(\RR^2)} \leq (2\pi)^8 
\frac{k_0^2 C({\bf 0}) z}{8} \exp \big( - \frac{k_0^2 C({\bf 0}) z}{4} \big)  ,
\end{equation}
so that it is bounded uniformly in $ \bzeta \in \RR^2$, $z \in [0,Z]$ by
\begin{equation}
\label{eq:Abound}
\sup_{z \in [0,Z],\bzeta\in \RR^2}
\| \tilde{A}(z,\cdot,\bzeta) \|_{L^1(\RR^2)} \leq \frac{(2\pi)^8}{2}
\sup_{z \in [0,Z]}  \frac{k_0^2 C({\bf 0}) z}{4} \exp \big( - \frac{k_0^2 C({\bf 0}) z}{4} \big) 
\leq \frac{(2\pi)^8}{2e} .
\end{equation}
- 
The function 
$$
\tilde{B}(z,\balpha,\bbeta,\bzeta_1,\bzeta_2) = K(z) 
A \big( z, \balpha,  \bzeta_2  +\bzeta_1\big)
A \big( z, \bbeta,  \bzeta_2   -\bzeta_1\big)
$$
is the solution of the equation (in which $\bzeta_1$ and $\bzeta_2$ are frozen):
\begin{eqnarray*}
\frac{\partial \tilde{B}}{\partial z}  &=& 
\frac{k_0^2}{4( 2\pi)^2} \int \hat{C}(\bk) \Big[ 
\tilde{B}(z,\balpha-\bk,\bbeta,\bzeta_1,\bzeta_2) 
e^{i \frac{z}{k_0} \bk \cdot (\bzeta_2+\bzeta_1)} 
\\
&&\hspace*{-0.2in}+
\tilde{B}(z,\balpha,\bbeta-\bk,\bzeta_1,\bzeta_2) 
e^{ i \frac{z}{k_0} \bk \cdot (\bzeta_2-\bzeta_1)} 
- 2 \tilde{B} (z,\balpha ,\bbeta,\bzeta_1,\bzeta_2) \Big] d\bk\\
&&\hspace*{-0.2in}+ \frac{k_0^2}{8(2\pi)^2}  \Big[ 
\hat{C} (\balpha) \tilde{A}(z,\bbeta,\bzeta_2-\bzeta_1) e^{ i \frac{z}{k_0} \balpha \cdot (\bzeta_2+\bzeta_1)} 
+
\hat{C} (\bbeta) \tilde{A}(z,\balpha,\bzeta_2+\bzeta_1) e^{ i \frac{z}{k_0} \bbeta \cdot (\bzeta_2-\bzeta_1)} 
\Big],
\end{eqnarray*}
starting from $\tilde{B}(z=0,\balpha,\bbeta,\bzeta_1,\bzeta_2)  =0$. 
From (\ref{eq:boundtildeA})
$\| \tilde{B}(z,\cdot,\cdot,\bzeta_1,\bzeta_2) \|_{L^1}$
is bounded uniformly in $ \bzeta_1,\bzeta_2 \in \RR^2$, $z \in [0,Z]$ by
$$
\sup_{z \in [0,Z],\bzeta_1,\bzeta_2\in \RR^2}
\| \tilde{B}(z,\cdot,\cdot,\bzeta_1,\bzeta_2) \|_{L^1(\RR^2 \times \RR^2)} \leq  (2\pi)^8 
\Big( \frac{k_0^2 C({\bf 0}) Z}{8}  \Big)^2.
$$

The strategy is to show that the remainder $R^\eps$ in (\ref{def:Reps})  belongs to $L^1$ and that its 
$L^1$-norm goes to zero as $\eps \to 0$ uniformly in  $z\in [0,Z]$.
To this effect  we will first show that $R^\eps $ satisfies an equation with zero initial condition
and with a source term (Lemma \ref{lem:lem1R}),  then
that the source term is small in $L^1$-norm (Lemma \ref{lem:lem2R}),
 and we finally  get the desired result by a Gronwall-type argument (Lemma \ref{lem:lem3R}).

\begin{lemma}
\label{lem:lem1R}
$R^\eps$ satisfies 
\begin{equation}
 \label{eq:tildeReps2}
 \frac{\partial R^\eps }{\partial z}
 (z,\bxi_1,\bxi_2, \bzeta_1,\bzeta_2 )= \big[ {\cal L}^\eps_z R^\eps \big] (z,\bxi_1,\bxi_2, \bzeta_1,\bzeta_2 )
 + S^\eps(z,\bxi_1,\bxi_2, \bzeta_1, \bzeta_2 ),
\end{equation}
 starting from $R^\eps (z=0,\bxi_1,\bxi_2, \bzeta_1,\bzeta_2 )=0$, 
 with the source term $S^\eps$ given by
\begin{equation}
S^\eps (z,\bxi_1,\bxi_2,\bzeta_1, \bzeta_2 )= S_{1}^\eps(z,\bxi_1,\bxi_2, \bzeta_1, \bzeta_2 )+S_{2}^\eps(z,\bxi_1,\bxi_2, \bzeta_1,\bzeta_2 ) ,
\label{def:Repsproof3}
\end{equation}
with
\begin{eqnarray}
S^\eps_{1}(z,\bxi_1,\bxi_2, \bzeta_1, \bzeta_2 )&=& - \frac{\partial N^\eps}{\partial z}(z,\bxi_1,\bxi_2,\bzeta_1, \bzeta_2 ),\\
S_{2}^\eps(z,\bxi_1,\bxi_2, \bzeta_1, \bzeta_2 ) &=& \big[ {\cal L}^\eps_z  N^\eps  \big](z,\bxi_1,\bxi_2,\bzeta_1, \bzeta_2 ) .
\end{eqnarray}
\end{lemma}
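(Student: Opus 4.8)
The plan is to treat this as a direct verification, since $R^\eps$ is by construction the difference $\tilde{\mu}^\eps - N^\eps$ in \eqref{def:Reps}; the evolution equation for $R^\eps$ should then follow simply by subtracting the equation satisfied by $N^\eps$ from the one satisfied by $\tilde{\mu}^\eps$. First I would record that $\tilde{\mu}^\eps$ solves $\partial_z \tilde{\mu}^\eps = {\cal L}^\eps_z \tilde{\mu}^\eps$ by \eqref{eq:tildemueps}, and that ${\cal L}^\eps_z$ is a bounded linear operator on $L^1(\RR^2\times\RR^2\times\RR^2\times\RR^2)$ uniformly in $z$ and $\eps$ (the analogue of Lemma \ref{lem:normLz} noted at the start of this appendix). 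This guarantees that $\tilde{\mu}^\eps(z,\cdot)$ and each term of $N^\eps(z,\cdot)$ are $L^1$-valued, so all the manipulations below are legitimate in $L^1$.

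Then I would carry out the computation. Writing $\tilde{\mu}^\eps = R^\eps + N^\eps$ and using linearity of ${\cal L}^\eps_z$,
\[
\frac{\partial R^\eps}{\partial z} = \frac{\partial \tilde{\mu}^\eps}{\partial z} - \frac{\partial N^\eps}{\partial z} = {\cal L}^\eps_z \tilde{\mu}^\eps - \frac{\partial N^\eps}{\partial z} = {\cal L}^\eps_z R^\eps + \Big( {\cal L}^\eps_z N^\eps - \frac{\partial N^\eps}{\partial z}\Big).
\]
The parenthesized term is exactly $S_2^\eps + S_1^\eps = S^\eps$, which is the claimed identity \eqref{eq:tildeReps2}. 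The only substantive input is the linearity of ${\cal L}^\eps_z$, which lets ${\cal L}^\eps_z \tilde{\mu}^\eps$ split as ${\cal L}^\eps_z R^\eps + {\cal L}^\eps_z N^\eps$; this is immediate from the integral form \eqref{eq:tildeNeps}.

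For the initial condition I would evaluate $N^\eps$ at $z=0$. Since $\tilde{A}(z=0,\cdot,\cdot)=0$ and $\tilde{B}(z=0,\cdot,\cdot,\cdot,\cdot)=0$ by their defining equations, all but the first term of $N^\eps$ in \eqref{def:Neps} vanish at $z=0$; the surviving term equals $K(0)\,\phi^\eps(\bxi_1)\phi^\eps(\bxi_2)\phi^\eps(\bzeta_1)\phi^\eps(\bzeta_2)$, and $K(0)=(2\pi)^8$ by \eqref{def:K}. This coincides with the initial data \eqref{eq:initialtildeM2eps} for $\tilde{\mu}^\eps$, so $R^\eps(z=0,\cdot)=\tilde{\mu}^\eps(z=0,\cdot)-N^\eps(z=0,\cdot)=0$, as required.

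The one point deserving a line of care — and the closest thing here to an obstacle — is the regularity of $N^\eps$ in $z$: one must know that $K$, $\tilde{A}$ and $\tilde{B}$ are differentiable in $z$ as $L^1$-valued functions, so that $\partial_z N^\eps$ exists in $L^1$ and $S_1^\eps$ is well defined. This is supplied by the ODE for $K$ and the linear evolution equations characterizing $\tilde{A}$ and $\tilde{B}$ (with $\bzeta$, respectively $\bzeta_1,\bzeta_2$, frozen), together with the uniform $L^1$ bounds \eqref{eq:boundtildeA}. Beyond this bookkeeping the lemma carries no real difficulty; it merely rewrites $R^\eps$ as the solution of an inhomogeneous equation whose source $S^\eps$ measures the failure of the ansatz $N^\eps$ to solve \eqref{eq:tildemueps} exactly. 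The genuine work is deferred to the estimate showing $S^\eps$ is small in $L^1$ and to the Gronwall argument in the two lemmas that follow.
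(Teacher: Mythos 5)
Your proposal is correct and follows essentially the same route as the paper: write $\tilde{\mu}^\eps = R^\eps + N^\eps$, use $\partial_z \tilde{\mu}^\eps = {\cal L}^\eps_z \tilde{\mu}^\eps$ and linearity of ${\cal L}^\eps_z$ to obtain $\partial_z R^\eps = {\cal L}^\eps_z R^\eps + ({\cal L}^\eps_z N^\eps - \partial_z N^\eps)$. Your additional checks (the vanishing of $\tilde A$ and $\tilde B$ at $z=0$ so that $R^\eps(0)=0$, and the $L^1$-valued differentiability of $N^\eps$) are sound and only make explicit what the paper leaves implicit.
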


\begin{proof}
 By taking the $z$-derivative of $R^\eps $, and using $R^\eps = \tilde{\mu}^\eps -
 N^\eps $,
 we find that
\begin{eqnarray*}
\frac{\partial R^\eps }{\partial z}
&=&
\frac{\partial \tilde{\mu}^\eps}{\partial z} - 
\frac{\partial N^\eps }{\partial z} 
\\ &=& 
 \big[ {\cal L}^\eps_z  \tilde{\mu}^\eps \big]- 
\frac{\partial N^\eps }{\partial z}  
\\ &=&
  \big[ {\cal L}^\eps_z   R^\eps \big]
+
\big[ {\cal L}^\eps_z N^\eps \big]
- 
\frac{\partial N^\eps}{\partial z}  ,
\end{eqnarray*}
 which gives the desired result.
 \qed
\end{proof}

\begin{lemma}
\label{lem:lem2R}%
For any $Z>0$ we have
\begin{equation}
\label{eq:lem2R}
\sup_{z \in [0,Z]} \Big\| \int_0^z 
{S}^\eps  (z',\cdot,\cdot,\cdot,\cdot) dz'\Big\|_{L^1(\RR^2 \times \RR^2 \times \RR^2\times \RR^2)}  \stackrel{\eps \to 0}{\longrightarrow} 0  .
\end{equation}
\end{lemma}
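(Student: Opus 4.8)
The plan is to prove that the time-integrated source vanishes in $L^1$ by writing $S^\eps=S_1^\eps+S_2^\eps$ as a finite sum of error terms, each killed either by the $\eps$-scale concentration of the Gaussian factors or by averaging of a rapid phase in $z'$. First I would substitute the defining evolution equations of $K$, $\tilde{A}$ and $\tilde{B}$ into $S_1^\eps=-\partial_z N^\eps$, and expand $S_2^\eps={\cal L}^\eps_z N^\eps$ by letting the seven shift-and-phase operations in (\ref{eq:tildeNeps}) act on each of the seven terms of $N^\eps$ in (\ref{def:Neps}). The functions $K$, $\tilde{A}$, $\tilde{B}$ are designed precisely so that their source terms reproduce the \emph{resonant} contributions of ${\cal L}^\eps_z N^\eps$: for instance, applying the shift $(\bxi_1-\bk,\bxi_2-\bk)$ to the leading Gaussian term, the identity $\phi^\eps(\bxi_1-\bk)\phi^\eps(\bxi_2-\bk)=\phi^\eps(\frac{\bxi_1-\bxi_2}{\sqrt2})\,\frac{r_0^2}{2\pi\eps^2}e^{-\frac{r_0^2}{\eps^2}|\bk-\frac{\bxi_1+\bxi_2}{2}|^2}$ produces a $\bk$-weight concentrating at $\bk=\frac{\bxi_1+\bxi_2}{2}$ with mass $\frac12$, which upon integration against $\hat{C}(\bk)$ matches the source $\frac{k_0^2}{8(2\pi)^2}\hat{C}(\frac{\bxi_1+\bxi_2}{2})K(z)e^{i\frac{z}{\eps k_0}\frac{\bxi_1+\bxi_2}{2}\cdot(\bzeta_1+\bzeta_2)}$ of the $\tilde{A}$-equation. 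After these designed cancellations, $S^\eps$ reduces to residual terms of two kinds.

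The first kind are \emph{non-resonant} terms: for a given term of $N^\eps$ and a given shift operation the accompanying phase $e^{i\frac{z'}{\eps k_0}\Phi}$ has a linear argument $\Phi$ (such as $\bk\cdot(\bxi_2+\bzeta_1)$) in which the relevant variable is a \emph{spread} argument of $\tilde{A}$ or $\tilde{B}$ rather than one frozen by a $\phi^\eps$ factor, so $\Phi$ is generically of order one. These terms are $O(1)$ in amplitude and are controlled only after the time integration $\int_0^z\,\cdot\,dz'$. For them I would integrate by parts in $z'$, using that the amplitude $g(z')$ (a product of $\hat{C}(\bk)$, Gaussian factors, and $\tilde{A}/\tilde{B}$) and its derivative $\partial_{z'}g$ are bounded in $L^1$ uniformly in $\eps$ and $z'\le Z$ — the bound on $\partial_{z'}\tilde{A},\partial_{z'}\tilde{B}$ following from their evolution equations together with Lemma \ref{lem:normLz} and the estimate (\ref{eq:Abound}). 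This gives the pointwise bound $|\int_0^z e^{i\frac{z'}{\eps k_0}\Phi}g\,dz'|\le\min(\int_0^z|g|,\ \frac{\eps k_0}{|\Phi|}(|g(0)|+|g(z)|+\int_0^z|\partial_{z'}g|))$. I would then split the integration over the free variables into $\{|\Phi|>\delta\}$, where the integration-by-parts bound yields $O(\eps/\delta)$, and $\{|\Phi|\le\delta\}$, where the trivial bound yields a contribution controlled by the $L^1$-mass of $g$ over a thin slab; choosing $\delta=\delta(\eps)\to0$ with $\eps/\delta\to0$ makes both pieces vanish.

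The second kind are \emph{resonant-mismatch} terms: the residual difference between a concentrated $\bk$-average $\int\hat{C}(\bk)\,\omega^\eps(\bk-\bc)\,e^{i\frac{z'}{\eps k_0}\bk\cdot\bd}\,d\bk$ and its designed limit $\frac12\hat{C}(\bc)\,e^{i\frac{z'}{\eps k_0}\bc\cdot\bd}$, occurring in the terms where $\Phi$ is slow because $\bd$ (e.g. $\bzeta_1+\bzeta_2$) is itself frozen at scale $\eps$ by $\phi^\eps$ factors. Here I would estimate the amplitude mismatch by continuity and boundedness of $\hat{C}$ (Hypothesis (\ref{hyp:h})) and the phase mismatch by $|e^{i\frac{z'}{\eps k_0}\bk\cdot\bd}-e^{i\frac{z'}{\eps k_0}\bc\cdot\bd}|\le\frac{z'}{\eps k_0}|\bk-\bc|\,|\bd|$; since $|\bk-\bc|\lesssim\eps$ on the support of $\omega^\eps$ and $|\bd|\lesssim\eps$ on the support of the Gaussian factors, the product is $O(\eps)$ after integration, and $L^1$-integrable uniformly by (\ref{eq:Abound}). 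Summing the finitely many terms of both kinds, and invoking the uniform $L^1$ bounds on $K$, $\tilde{A}$, $\tilde{B}$ over $[0,Z]$, then gives (\ref{eq:lem2R}).

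The main obstacle is the near-resonant slab $\{|\Phi|\le\delta\}$ in the non-resonant terms: where the variable carrying $\Phi$ degenerates, non-stationary phase fails and one must show that this set contributes negligibly to the $L^1$ norm uniformly in $\eps$. The key point is that the variable transverse to $\{\Phi={\bf 0}\}$ is always one in which $g$ is genuinely $L^1$ (through $\tilde{A}$ or $\tilde{B}$, bounded by (\ref{eq:Abound})) rather than $\eps$-concentrated, so the slab of width $O(\delta)$ carries $L^1$-mass tending to $0$ as $\delta\to0$, uniformly in $\eps$; balancing $\delta$ against $\eps$ closes the argument.
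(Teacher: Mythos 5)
Your overall architecture coincides with the paper's: the same splitting $S^\eps=S_1^\eps+S_2^\eps$ with $S_1^\eps$ further decomposed according to whether $K$, $\tilde A$ or $\tilde B$ is involved; the same identification of the ``resonant'' contributions of ${\cal L}^\eps_z N^\eps$ with the source terms built into the evolution equations of $\tilde A$ and $\tilde B$ (your Gaussian product identity and the mass $\tfrac12$ of $\phi^\eps(\sqrt2(\bk-\tfrac{\bxi_1+\bxi_2}{2}))$ are exactly the computation around (\ref{eq:tildeNeps1c})); the same treatment of the resonant-mismatch terms by continuity of $\hat C$ plus a Lipschitz phase estimate in which the $O(\eps)$ factors land on Gaussian moments; and the same near-resonance/off-resonance splitting $\{|\Phi|\le\delta\}\cup\{|\Phi|>\delta\}$ for the genuinely oscillatory leftovers, with the slab controlled by dominated convergence because the transverse variable carries honest $L^1$ mass through $\tilde A$ or $\tilde B$.

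The one place where you deviate is the mechanism for extracting smallness on $\{|\Phi|>\delta\}$, and as written it has a gap. The paper's Lemma \ref{lem:tech1} approximates the amplitude by a piecewise-constant function of $z'$ (Riemann sums) and integrates the rapid phase exactly on each subinterval; this only requires $z'\mapsto F(z',\cdot)$ to be continuous with values in $L^1$, which follows from dominated convergence. You instead integrate by parts in $z'$, which requires a uniform $L^1$ bound on $\partial_{z'}g$. The contributions of $\partial_{z'}\tilde A$ and $\partial_{z'}\tilde B$ are indeed controlled by their evolution equations, but the amplitude $g$ also carries the \emph{slow} phases $e^{i\frac{z'}{k_0}\bk\cdot(\cdot)}$ (e.g. $e^{i\frac{z'}{k_0}\bk\cdot(\bxi+\bzeta)}$ in the $K$-type term, $e^{i\frac{z'}{k_0}\bk\cdot(\bxi-\bbeta/\sqrt2)}$ in the $\tilde A$-type term), and differentiating these produces a factor $|\bk|$ against $\hat C(\bk)$. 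Hypothesis (\ref{hyp:h}) gives $\hat C\in L^1\cap L^\infty$ but not $\int|\bk|\hat C(\bk)\,d\bk<\infty$ (take $\hat C(\bk)\sim|\bk|^{-2-\mu}$ at infinity with $0<\mu\le 1$), so the claimed uniform bound on $\|\partial_{z'}g\|_{L^1}$ fails in general. The gap is repairable --- truncate $\hat C$ to $\{|\bk|\le M\}$, bound the tail by the trivial estimate $2\,\tfrac{k_0^2}{4(2\pi)^2}Z\int_{|\bk|>M}\hat C\,d\bk\cdot\sup_z\|\cdot\|_{L^1}$, run your argument on the truncated part, and send $\eps\to0$, then $M\to\infty$, then $\delta\to0$ --- but you need this extra step, or else the paper's Riemann-sum device, to close the argument under the stated hypothesis.
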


\begin{proof}
There are three types of contributions to  $S_{1}^\eps$, the one  that involves $K$,
the ones that involve $\tilde{A}$, and the ones that involve $\tilde{B}$.
We decompose  $S_{1}^\eps$ into three terms corresponding to these three
contributions.
\begin{eqnarray*}
S_{1}^\eps  =  S_{K}^\eps +  S_{A}^\eps + S_{B}^\eps  .
 \end{eqnarray*}
From (\ref{def:Neps}) and the differential equations satisfied by $K$, $\tilde{A}$, and $\tilde{B}$,
 the components of  $S^\eps_{ 1}$ are  given explicitly by
 \begin{eqnarray}
 &&
S_{K}^\eps(z,\bxi_1,\bxi_2, \bzeta_1,\bzeta_2 )=  \frac{ k_0^2}{4(2\pi)^2}  \int \hat{C}(\bk) \bigg\{ 
 2 K \phi^\eps (\bxi_1) \phi^\eps(\bxi_2) \phi^\eps (\bzeta_1)  \phi^\eps (\bzeta_2)    
\bigg\} d\bk ,
\label{def:SepsK}
\end{eqnarray}

\begin{eqnarray}
\nonumber
&&
S_{A}^\eps(z,\bxi_1,\bxi_2, \bzeta_1,\bzeta_2 )= - \frac{ k_0^2}{4(2\pi)^2} \phi^\eps (\bzeta_1) \int \hat{C}(\bk) \bigg\{    \\ && 
  \phi^\eps \big(\frac{\bxi_1-\bxi_2}{\sqrt{2}} \big) \phi^\eps(\bzeta_2)
\Big[ \tilde{A}\big( \frac{\bxi_2+\bxi_1}{2} - \bk,\frac{\bzeta_2+\bzeta_1}{\eps}\big)
e^{ i\frac{z}{\eps k_0} \bk \cdot ( \bzeta_2  +\bzeta_1)} - 2  \tilde{A}\big( \frac{\bxi_2+\bxi_1}{2},\frac{\bzeta_2+\bzeta_1}{\eps}\big)
\Big]   \nonumber  \\
\nonumber
&& + \phi^\eps \big(\frac{\bxi_1+\bxi_2}{\sqrt{2}} \big) \phi^\eps(\bzeta_2)
\Big[ \tilde{A}\big( \frac{\bxi_2-\bxi_1}{2} - \bk,\frac{\bzeta_2-\bzeta_1}{\eps}\big)
e^{ i\frac{z}{\eps k_0} \bk \cdot ( \bzeta_2  -\bzeta_1)} - 2  \tilde{A}\big( \frac{\bxi_2-\bxi_1}{2},\frac{\bzeta_2-\bzeta_1}{\eps} \big)
 \Big] \\
\nonumber
&& + \phi^\eps \big(\frac{\bxi_1-\bzeta_2}{\sqrt{2}} \big) \phi^\eps(\bxi_2)
\Big[ \tilde{A}\big( \frac{\bzeta_2+\bxi_1}{2} - \bk,\frac{\bxi_2+\bzeta_1}{\eps} \big)
e^{ i\frac{z}{\eps k_0} \bk \cdot ( \bxi_2 +\bzeta_1)} - 2  \tilde{A}\big( \frac{\bzeta_2+\bxi_1}{2},\frac{\bxi_2+\bzeta_1}{\eps} \big)
\Big] \\
\nonumber
&& + \phi^\eps \big(\frac{\bxi_1+\bzeta_2}{\sqrt{2}} \big) \phi^\eps(\bxi_2)
\Big[ \tilde{A}\big( \frac{\bzeta_2-\bxi_1}{2} - \bk,\frac{\bxi_2-\bzeta_1}{\eps} \big)
e^{ i\frac{z}{\eps k_0} \bk \cdot ( \bxi_2  -\bzeta_1)} - 2  \tilde{A}\big( \frac{\bzeta_2-\bxi_1}{2},\frac{\bxi_2-\bzeta_1}{\eps} \big)
\Big]   \bigg\} d\bk   \nonumber \\
\nonumber
&&   - \frac{k_0^2}{8(2\pi)^2}  
\phi^\eps (\bzeta_1)    \bigg\{  
 \phi^\eps \big(\frac{\bxi_1-\bxi_2}{\sqrt{2}} \big) \phi^\eps(\bzeta_2)
K \hat{C}\big( \frac{\bxi_2+\bxi_1}{2}\big) 
e^{ i\frac{z}{\eps k_0} \frac{\bxi_2+\bxi_1}{2}  \cdot (\bzeta_2+\bzeta_1 )} 
 \\
\nonumber
&& \hspace*{1.0in} +   \phi^\eps \big(\frac{\bxi_1+\bxi_2}{\sqrt{2}} \big) \phi^\eps(\bzeta_2)
K \hat{C}\big( \frac{\bxi_2-\bxi_1}{2}\big) 
e^{ i\frac{z}{\eps k_0} \frac{\bxi_2-\bxi_1}{2}  \cdot ( \bzeta_2-\bzeta_1  )} 
 \\
\nonumber
&& \hspace*{1.0in} +  \phi^\eps \big(\frac{\bzeta_2-\bxi_1}{\sqrt{2}} \big) \phi^\eps(\bxi_2)
K \hat{C}\big( \frac{\bzeta_2+\bxi_1}{2}\big) 
e^{ i\frac{z}{\eps k_0} \frac{\bzeta_2+\bxi_1}{2}  \cdot (\bxi_2 +\bzeta_1)} 
 \\
 && \hspace*{1.0in} +   \phi^\eps \big(\frac{\bzeta_2+\bxi_1}{\sqrt{2}} \big) \phi^\eps(\bxi_2)
K \hat{C}\big( \frac{\bzeta_2-\bxi_1}{2}\big) 
e^{ i\frac{z}{\eps k_0} \frac{\bzeta_2-\bxi_1}{2}  \cdot ( \bxi_2-\bzeta_1 )} 
 \Big] 
\bigg\}  ,
\label{def:SepsA}
\end{eqnarray}
 
 \begin{eqnarray}
\nonumber
&&
S_{B}^\eps(z,\bxi_1,\bxi_2, \bzeta_1,\bzeta_2 )= - \frac{ k_0^2}{4(2\pi)^2} \phi^\eps (\bzeta_1) \int \hat{C}(\bk) \bigg\{ 
  \\
\nonumber
&&  \phi^\eps(\bzeta_2)\Big[ \tilde{B}\big( \frac{\bxi_2+\bxi_1}{2} - \bk,\frac{\bxi_2-\bxi_1}{2},\frac{\bzeta_1}{\eps},\frac{\bzeta_2}{\eps} \big)
e^{ i\frac{z}{\eps k_0} \bk \cdot ( \bzeta_2 +\bzeta_1)}  \\
\nonumber
&& \quad +
 \tilde{B}\big( \frac{\bxi_2+\bxi_1}{2},\frac{\bxi_2-\bxi_1}{2} - \bk,\frac{\bzeta_1}{\eps},\frac{\bzeta_2}{\eps} \big)
e^{ i\frac{z}{\eps k_0} \bk \cdot ( \bzeta_2 -\bzeta_1)} 
-2  \tilde{B}\big( \frac{\bxi_2+\bxi_1}{2}  ,\frac{\bxi_2-\bxi_1}{2},\frac{\bzeta_1}{\eps},\frac{\bzeta_2}{\eps} \big)
\Big]\\
\nonumber
&&
+\phi^\eps(\bxi_2) \Big[ \tilde{B} \big( \frac{\bzeta_2+\bxi_1}{2} - \bk, \frac{\bzeta_2-\bxi_1}{2} ,\frac{\bzeta_1}{\eps},\frac{\bxi_2}{\eps}\big)
e^{ i\frac{z}{\eps k_0} \bk \cdot ( \bxi_2+\bzeta_1 )} \\
\nonumber
&& \quad + \tilde{B} \big( \frac{\bzeta_2+\bxi_1}{2} , \frac{\bzeta_2-\bxi_1}{2}-\bk , \frac{\bzeta_1}{\eps},\frac{\bxi_2}{\eps}\big)
e^{ i\frac{z}{\eps k_0} \bk \cdot ( \bxi_2-\bzeta_1   )} 
- 2\tilde{B} \big( \frac{\bzeta_2+\bxi_1}{2} , \frac{\bzeta_2-\bxi_1}{2} , \frac{\bzeta_1}{\eps},\frac{\bxi_2}{\eps}\big)
\Big] \bigg\} d\bk\\
\nonumber
&&   - \frac{k_0^2}{8(2\pi)^2} \phi^\eps (\bzeta_1)
\bigg\{  
  \phi^\eps(\bzeta_2) \Big[
 \hat{C}\big( \frac{\bxi_2+\bxi_1}{2}\big) \tilde{A}\big( \frac{\bxi_2-\bxi_1}{2}, \frac{\bzeta_2-\bzeta_1}{\eps} \big)
e^{ i\frac{z}{\eps k_0} \frac{\bxi_2+\bxi_1}{2}  \cdot (\bzeta_2 +\bzeta_1)} \\
\nonumber
&&\hspace*{1.3in}  \quad +
\hat{C}\big( \frac{\bxi_2-\bxi_1}{2}\big) \tilde{A}\big( \frac{\bxi_2+\bxi_1}{2}, \frac{\bzeta_2+\bzeta_1}{\eps} \big)
e^{ i\frac{z}{\eps k_0} \frac{\bxi_2-\bxi_1}{2}  \cdot ( \bzeta_2-\bzeta_1  )} 
\Big] \\
\nonumber
&& \hspace*{0.95in}+ \phi^\eps(\bxi_2) \Big[
 \hat{C}\big( \frac{\bzeta_2+\bxi_1}{2}\big) \tilde{A}\big( \frac{\bzeta_2-\bxi_1}{2}, \frac{\bxi_2-\bzeta_1}{\eps} \big)
e^{ i\frac{z}{\eps k_0} \frac{\bzeta_2+\bxi_1}{2}  \cdot ( \bxi_2  +\bzeta_1)} \\
&& \hspace*{1.3in}\quad +
\hat{C}\big( \frac{\bzeta_2-\bxi_1}{2}\big) \tilde{A}\big( \frac{\bzeta_2+\bxi_1}{2}, \frac{\bxi_2+\bzeta_1}{\eps} \big)
e^{ i\frac{z}{\eps k_0} \frac{\bzeta_2-\bxi_1}{2}  \cdot ( \bxi_2  -\bzeta_1)} 
\Big] 
\bigg\}  .
\label{def:SepsB}
\end{eqnarray}
 
$S_{2}^\eps$  is given by ${\cal L}_z^\eps N^\eps $,
 with $N^\eps $ given by (\ref{def:Neps}).
 Therefore  we can express $S_{ 2}^\eps$ as 
\begin{eqnarray}
\nonumber
S_{2}^\eps (z,\bxi_1,\bxi_2, \bzeta_1,\bzeta_2 )&=&
{\cal L}_{z}^\eps \big[ K(z) \phi^\eps(\bxi_1)\phi^\eps(\bxi_2)\phi^\eps(\bzeta_1)\phi^\eps(\bzeta_2) \big] 
\\
\nonumber
&& 
 + {\cal L}_{z}^\eps \big[ \phi^\eps \big( \frac{\bxi_1-\bxi_2}{\sqrt{2}}\big) \phi^\eps(\bzeta_1)\phi^\eps ( \bzeta_2 )
\tilde{A}\big(z, \frac{\bxi_2+\bxi_1}{2} ,\frac{\bzeta_2+\bzeta_1}{\eps} \big)  \big]
\\
\nonumber
&& 
+
{\cal L}_{z}^\eps \big[\phi^\eps \big( \frac{\bxi_1+\bxi_2}{\sqrt{2}}\big)
\phi^\eps(\bzeta_1) \phi^\eps ( \bzeta_2 )
\tilde{A} \big(z, \frac{\bxi_2-\bxi_1}{2} ,\frac{\bzeta_2-\bzeta_1}{\eps} \big) \big]
\\
\nonumber
&&  +
{\cal L}_{z}^\eps \big[\phi^\eps \big( \frac{\bxi_1-\bzeta_2}{\sqrt{2}}\big)
\phi^\eps ( \bxi_2 ) \phi^\eps(\bzeta_1)
\tilde{A}\big(z, \frac{\bzeta_2+\bxi_1}{2} ,\frac{\bxi_2+\bzeta_1}{\eps} \big) \big]
\\
\nonumber
&& 
+
{\cal L}_{z}^\eps \big[\phi^\eps \big( \frac{\bxi_1+\bzeta_2}{\sqrt{2}}\big)
\phi^\eps ( \bxi_2 ) \phi^\eps(\bzeta_1)
\tilde{A}\big(z, \frac{\bzeta_2-\bxi_1}{2} ,\frac{\bxi_2-\bzeta_1}{\eps}  \big) \big]
\\
\nonumber
&& 
   + {\cal L}_{z}^\eps \big[\phi^\eps (\bzeta_2) \phi^\eps(\bzeta_1)
\tilde{B} \big( z, \frac{\bxi_2+\bxi_1}{2}, \frac{\bxi_2-\bxi_1}{2} ,\frac{\bzeta_1}{\eps}, \frac{\bzeta_2}{\eps}\big) \big]
\\
&& 
+
 {\cal L}_{z}^\eps \big[\phi^\eps (\bxi_2) \phi^\eps(\bzeta_1)
\tilde{B} \big( z, \frac{\bzeta_2+\bxi_1}{2}, \frac{\bzeta_2-\bxi_1}{2},\frac{\bzeta_1}{\eps}, \frac{\bxi_2}{\eps}   \big)\big] .
\label{eq:tildeNeps1}
\end{eqnarray}
It turns out that all the terms in $S_{1}^\eps$ are canceled by terms in $S_{2}^\eps$,
and the last terms of $S_{2}^\eps$ are small, as will be shown below.

Again there are three types of contributions in the expression (\ref{eq:tildeNeps1}) for  $S_{2}^\eps$, the one that involves $K$,
the ones that involve $\tilde{A}$, and the ones that involve $\tilde{B}$.
We will study one contribution for each of these three types and show the desired result for them.\\

Let us examine the contributions of $K(z) \phi^\eps(\bxi_1)\phi^\eps(\bxi_2)\phi^\eps(\bzeta_1)\phi^\eps(\bzeta_2)$ to $S_{2}^\eps$:
\begin{eqnarray}
\nonumber
&& 
{\cal L}_{z}^\eps \big[ K(z) \phi^\eps(\bxi_1)\phi^\eps(\bxi_2)\phi^\eps(\bzeta_1)\phi^\eps(\bzeta_2) \big]
= \frac{k_0^2}{4(2\pi)^2} K(z) \phi^\eps(\bzeta_1) \int  \hat{C}(\bk) \bigg[ -2 \phi^\eps(\bxi_1)\phi^\eps(\bxi_2)\phi^\eps(\bzeta_2)
\\
\nonumber
&&
+
\phi^\eps(\bxi_1-\bk)\phi^\eps(\bxi_2-\bk) \phi^\eps(\bzeta_2) e^{i \frac{z}{\eps k_0} \bk \cdot ( \bzeta_2 + \bzeta_1)}
+
\phi^\eps(\bxi_1-\bk)\phi^\eps(\bzeta_2-\bk) \phi^\eps(\bxi_2) e^{i \frac{z}{\eps k_0} \bk \cdot ( \bxi_2 + \bzeta_1)}\\
\nonumber
&&+
\phi^\eps(\bxi_1+\bk)\phi^\eps(\bxi_2-\bk) \phi^\eps(\bzeta_2) e^{i \frac{z}{\eps k_0} \bk \cdot (  \bzeta_2  - \bzeta_1)}
+
\phi^\eps(\bxi_1+\bk)\phi^\eps(\bzeta_2-\bk) \phi^\eps(\bxi_2) e^{i \frac{z}{\eps k_0} \bk \cdot (  \bxi_2  - \bzeta_1)}\\
\nonumber
&&
- \phi^\eps (\bxi_1) \phi^\eps(\bxi_2-\bk) \phi^\eps( \bzeta_2-\bk) e^{i \frac{z}{\eps k_0 } \big(\bk \cdot(\bzeta_2+\bxi_2 ) -|\bk|^2\big)} \\
&&
- \phi^\eps (\bxi_1) \phi^\eps(\bxi_2-\bk) \phi^\eps( \bzeta_2+\bk) e^{i \frac{z}{\eps k_0} \big(\bk \cdot(\bzeta_2-\bxi_2 ) +|\bk|^2\big)}
\bigg] d\bk
.
\label{eq:tildeNeps1b}
\end{eqnarray}
The first term  cancels with  the  term  $S_{K}^\eps$.
The second term can be rewritten since
$$
\phi^\eps(\bxi_1-\bk)\phi^\eps(\bxi_2-\bk) = \phi^\eps\Big( \sqrt{2}\big( \bk - \frac{\bxi_1+\bxi_2}{2}\big) \Big)
\phi^\eps\Big( \frac{\bxi_1-\bxi_2}{\sqrt{2}}\Big)   ,
$$
and therefore, up to a negligible term in $L^1(\RR^2 \times \RR^2 \times \RR^2\times \RR^2)$,
\begin{eqnarray}
\nonumber
&&\int  \hat{C}(\bk)\phi^\eps(\bxi_1-\bk)\phi^\eps(\bxi_2-\bk) \phi^\eps(\bzeta_1) \phi^\eps(\bzeta_2) e^{i \frac{z}{\eps k_0} \bk \cdot ( \bzeta_2 + \bzeta_1)}
d\bk \\
&&= 
\frac{1}{2}
\hat{C} \big( \frac{\bxi_1+\bxi_2}{2}\big)\phi^\eps\Big( \frac{\bxi_1-\bxi_2}{\sqrt{2}}\Big)  \phi^\eps(\bzeta_1) \phi^\eps(\bzeta_2) 
e^{i \frac{z}{\eps k_0} \frac{\bxi_1+\bxi_2}{2}  \cdot ( \bzeta_2  + \bzeta_1)} + o(1) ,
\label{eq:tildeNeps1c}
\end{eqnarray}
that cancels with  the first ``source'' term  in $S_{A}^\eps$.
The $o(1)$ characterization follows from the following arguments:
\begin{eqnarray*}
&&
\iint \Big| \int  \hat{C}(\bk)\phi^\eps(\bxi_1-\bk)\phi^\eps(\bxi_2-\bk) \phi^\eps(\bzeta_1) \phi^\eps(\bzeta_2) e^{i \frac{z}{\eps k_0} \bk \cdot ( \bzeta_2 + \bzeta_1)}
d\bk \\
&& \quad -
\frac{1}{2}
\hat{C} \big( \frac{\bxi_1+\bxi_2}{2}\big)\phi^\eps\Big( \frac{\bxi_1-\bxi_2}{\sqrt{2}}\Big)  \phi^\eps(\bzeta_1) \phi^\eps(\bzeta_2) 
e^{i \frac{z}{\eps k_0} \frac{\bxi_1+\bxi_2}{2}  \cdot ( \bzeta_2  + \bzeta_1)}  \Big| d\bxi_1 d\bxi_2 d\bzeta_1d\bzeta_2 \\
&& =
\iint \Big| \int  \hat{C}(\bk) \phi^\eps\Big( \sqrt{2}\big( \bk - \frac{\bxi_1+\bxi_2}{2}\big) \Big)
  e^{i \frac{z}{\eps k_0} \bk \cdot ( \bzeta_2 + \bzeta_1)}
d\bk \\
&& \quad -
\frac{1}{2}
\hat{C} \big( \frac{\bxi_1+\bxi_2}{2}\big) 
e^{i \frac{z}{\eps k_0} \frac{\bxi_1+\bxi_2}{2}  \cdot ( \bzeta_2  + \bzeta_1)}  \Big|
 \phi^\eps\Big( \frac{\bxi_1-\bxi_2}{\sqrt{2}}\Big) 
  \phi^\eps (\bzeta_1) \phi^\eps(\bzeta_2) d\bxi_1 d\bxi_2 d\bzeta_1d\bzeta_2 \\
  &&= 
\iint \Big| \int  \hat{C}(\bk)  \phi^\eps\big( \sqrt{2}( \bk - \bxi ) \big)
  e^{i \frac{z}{ k_0} \bk \cdot ( \bzeta_2 + \bzeta_1)}
d\bk \\
&& \quad -
\frac{1}{2}
\hat{C} ( \bxi ) 
e^{i \frac{z}{k_0} \bxi \cdot ( \bzeta_2  + \bzeta_1)}  \Big|
 \phi^1\Big( \frac{\bzeta}{\sqrt{2}}\Big) 
  \phi^1 (\bzeta_1) \phi^1(\bzeta_2) d\bxi  d\bzeta d\bzeta_1d\bzeta_2 \\
  && = 2
\iint \Big| \int  \hat{C}(\bk) \phi^\eps\big( \sqrt{2}( \bk - \bxi ) \big) 
  e^{i \frac{z}{ k_0} (\bk -\bxi)\cdot ( \bzeta_2' + \bzeta_1')}
d\bk   \\
  &&
  \quad  -
\frac{1}{2}
\hat{C} ( \bxi ) 
 \Big|
  \phi^1 \Big( \frac{\bzeta_1'+\bzeta_2'}{\sqrt{2}}\Big) \phi^1\Big( \frac{\bzeta_1'-\bzeta_2'}{\sqrt{2}}\Big) d\bxi  d\bzeta_1'd\bzeta_2' \\
   && = 2
\iint \Big| \int \big( \hat{C}(\bxi+\eps \bk)
  e^{i \eps \sqrt{2}  \frac{z}{ k_0}  {\bk}\cdot \bzeta'}- \hat{C} ( \bxi ) \big)  \phi^1\big( \sqrt{2} \bk\big)
d\bk  
 \Big|
  \phi^1  ( \bzeta' )  d\bxi  d\bzeta' \\
  && \leq 
2 \iint  \big| \hat{C}(\bxi+\eps \bk) - \hat{C} ( \bxi ) \big|   \phi^1\big(\sqrt{2} \bk\big)
  \phi^1 (\bzeta') d\bk   d\bxi  d\bzeta' \\
  && \quad + 2
\iint    \big|  
  e^{i \eps \sqrt{2}  \frac{z}{ k_0} \bk\cdot \bzeta'} - 1\big| 
  \hat{C} ( \bxi )
   \phi^1\big(\sqrt{2} \bk\big)
  \phi^1 (\bzeta') d\bk   d\bxi  d\bzeta' ,
\end{eqnarray*}
where 
$$
\phi^1 (\bxi)  = \frac{r_0^2}{2\pi} \exp \Big(- \frac{r_0^2 |\bxi|^2}{2} \Big)  ,
$$
whose $L^1$-norm is one.
The first term in the right-hand side goes to zero as $\eps \to 0$ by Lebesgue's  dominated convergence theorem
(since $C$ is in $L^1$, $\hat{C}$ is continuous, and since $C({\bf 0})<\infty$, the nonnegative-valued function
$\hat{C}$ is in $L^1$).
The second term can be bounded by
$$
2 \iint    \big|  
  e^{i \eps \sqrt{2}  \frac{z}{ k_0} \bk\cdot \bzeta'} - 1\big| 
  \hat{C} ( \bxi )
   \phi^1\big(\sqrt{2} \bk\big)
  \phi^1 (\bzeta') d\bk   d\bxi  d\bzeta' \leq 
  \eps \frac{Z}{k_0} \Big( \int |\bk| \phi^1 (\bk) d\bk \Big)^2 \Big( \int \hat{C}(\bxi) d\bxi \Big) ,
 $$
which shows that it also goes to zero as $\eps \to 0$ and which justifies the $o(1)$ in (\ref{eq:tildeNeps1c}).
The third, fourth, and fifth terms of the right-hand side of (\ref{eq:tildeNeps1b}) can be dealt with in the same way and cancel the next three ``source''  terms in $S_{A}^\eps$.
The last two terms give negligible contributions in the sense of (\ref{eq:lem2R}).
Indeed, for instance, the sixth term satisfies (using the change of variables $(\bzeta_2,\bxi_2)
\to ( \bzeta= (\bzeta_2-\bk)/\eps,\bxi=(\bxi_2-\bk)/\eps)$): 
\begin{eqnarray*}
&&   \iint \bigg| \int_0^z dz'\int d \bk \hat{C}(\bk) K(z')
  \phi^\eps(\bzeta_1) \phi^\eps(\bxi_1) \phi^\eps(\bxi_2-\bk)
 \phi^\eps( \bzeta_2-\bk) e^{i \frac{z'}{k_0\eps } \big(\bk \cdot(\bzeta_2+\bxi_2 )  -|\bk|^2\big)} 
 \bigg|    \\ &&  \hbox{} \times 
   d\bzeta_1
 d\bzeta_2 d \bxi_1 d \bxi_2   \leq \iint \bigg| \int_0^z dz' \hat{C}(\bk) K(z')
 \phi^1(\bxi) \phi^1( \bzeta ) e^{ i \frac{z'}{k_0} \bk\cdot(\bxi+\bzeta) } 
e^{ i \frac{z'}{k_0\eps } |\bk|^2 } 
 \bigg|
d\bk  d\bzeta d \bxi .
\end{eqnarray*} 
From Lemma \ref{lem:tech1}  this term goes to zero as $\eps \to 0$.\\

Let us examine the contributions of $\phi^\eps(\frac{\bxi_1-\bxi_2}{\sqrt{2}})\phi^\eps(\bzeta_1) \phi^\eps(\bzeta_2) \tilde{A}\big(z,\frac{\bxi_2+\bxi_1}{2},
\frac{\bzeta_2+\bzeta_1}{\eps} \big)$ to $S_{2}^\eps$:
\begin{eqnarray*}
&& 
{\cal L}_{z}^\eps \big[ \phi^\eps(\frac{\bxi_1-\bxi_2}{\sqrt{2}})\phi^\eps(\bzeta_1) \phi^\eps(\bzeta_2) \tilde{A}\big(z,\frac{\bxi_2+\bxi_1}{2},
\frac{\bzeta_2+\bzeta_1}{\eps}  \big) \big]
= \frac{k_0^2}{4(2\pi)^2} \phi^\eps(\bzeta_1) \int  \hat{C}(\bk) \\
&& \times \bigg[ -2 \phi^\eps(\frac{\bxi_1-\bxi_2}{\sqrt{2}}) \phi^\eps(\bzeta_2) \tilde{A}\big(z,\frac{\bxi_2+\bxi_1}{2},
\frac{\bzeta_2+\bzeta_1}{\eps}  \big) \\
&& + \phi^\eps(\frac{\bxi_1-\bxi_2}{\sqrt{2}}) \phi^\eps(\bzeta_2) \tilde{A}\big(z,\frac{\bxi_2+\bxi_1}{2}-\bk,
\frac{\bzeta_2+\bzeta_1}{\eps} \big) e^{i \frac{z}{\eps k_0} \bk \cdot (  \bzeta_2  + \bzeta_1)}\\
&& + \phi^\eps(\frac{\bxi_1-\bxi_2-\bk}{\sqrt{2}}) \phi^\eps(\bzeta_2-\bk) \tilde{A}\big(z,\frac{\bxi_2+\bxi_1-\bk}{2},
\frac{\bzeta_2+\bzeta_1-\bk}{\eps} \big) e^{i \frac{z}{\eps k_0} \bk \cdot ( \bxi_2 + \bzeta_1)}\\
&& + \phi^\eps(\frac{\bxi_1-\bxi_2+2\bk}{\sqrt{2}}) \phi^\eps(\bzeta_2) \tilde{A}\big(z,\frac{\bxi_2+\bxi_1}{2},
\frac{\bzeta_2+\bzeta_1}{\eps} \big) e^{i \frac{z}{\eps k_0} \bk \cdot (  \bzeta_2  - \bzeta_1)}\\
&& + \phi^\eps(\frac{\bxi_1-\bxi_2+\bk}{\sqrt{2}}) \phi^\eps(\bzeta_2-\bk) \tilde{A}\big(z,\frac{\bxi_2+\bxi_1+\bk}{2},
\frac{\bzeta_2+\bzeta_1-\bk}{\eps}  \big) e^{i \frac{z}{\eps k_0} \bk \cdot ( \bxi_2 -\bzeta_1)}\\
&& -  \phi^\eps(\frac{\bxi_1-\bxi_2+\bk}{\sqrt{2}}) \phi^\eps(\bzeta_2-\bk) \tilde{A}\big(z,\frac{\bxi_2+\bxi_1-\bk}{2},
\frac{\bzeta_2+\bzeta_1-\bk}{\eps}  \big) e^{i \frac{z}{\eps k_0}  \big( \bk \cdot (\bzeta_2+\bxi_2) -|\bk|^2\big)}\\
&& -  \phi^\eps(\frac{\bxi_1-\bxi_2+\bk}{\sqrt{2}}) \phi^\eps(\bzeta_2+\bk) \tilde{A}\big(z,\frac{\bxi_2+\bxi_1-\bk}{2},
\frac{\bzeta_2+\bzeta_1+\bk}{\eps} \big) e^{i \frac{z}{\eps k_0}  \big( \bk \cdot  (\bzeta_2-\bxi_2) +|\bk|^2\big)}
\bigg] d\bk  .
\end{eqnarray*}
The first and second terms will be canceled by the corresponding terms in $S_{A}^\eps$.
The fourth term can be rewritten up to a negligible term (in $L^1(\RR^2 \times \RR^2 \times \RR^2 \times \RR^2)$) as
\begin{eqnarray*}
&& \int  \hat{C}(\bk)  
 \phi^\eps(\frac{\bxi_1-\bxi_2+2\bk}{\sqrt{2}}) \phi^\eps(\bzeta_1)\phi^\eps(\bzeta_2) \tilde{A}\big(z,\frac{\bxi_2+\bxi_1}{2},
\frac{\bzeta_2+\bzeta_1}{\eps} \big) e^{i \frac{z}{\eps k_0} \bk \cdot (  \bzeta_2  - \bzeta_1)}
 d\bk\\
&& = \frac{1}{2} \hat{C}\big( \frac{\bxi_2-\bxi_1}{2} \big) \phi^\eps(\bzeta_1) \phi^\eps(\bzeta_2) \tilde{A}\big(z,\frac{\bxi_2+\bxi_1}{2},
\frac{\bzeta_2+\bzeta_1}{\eps}  \big) e^{i \frac{z}{\eps k_0} \frac{\bxi_2-\bxi_1}{2} \cdot ( \bzeta_2-\bzeta_1  )} +o(1) .
\end{eqnarray*}
Therefore the fourth term will be canceled by the corresponding 
``source'' term in $S_{B}^\eps$.
The other terms are negligible in the sense of (\ref{eq:lem2R}).
Indeed, for instance, the third term satisfies (using the change of variables $(\bzeta_1,\bzeta_2,\bxi_1,\bxi_2)
\to (\bxi=\bzeta_1/\eps, \bzeta= (\bzeta_2-\bk)/\eps, \balpha= (\bxi_2+\bxi_1-\bk)/2,\bbeta=(\bxi_1-\bxi_2-\bk)/(\eps \sqrt{2}))$):
\begin{eqnarray*}
&& 
\iint \bigg| \int_0^z dz' \int d\bk \hat{C}(\bk)
\phi^\eps(\frac{\bxi_1-\bxi_2-\bk}{\sqrt{2}})  \phi^\eps(\bzeta_1) \phi^\eps(\bzeta_2-\bk) \\
&& \quad \times \tilde{A}\big(z',\frac{\bxi_2+\bxi_1-\bk}{2},
\frac{\bzeta_2+\bzeta_1-\bk}{\eps} \big) e^{i \frac{z'}{\eps k_0} \bk \cdot ( \bxi_2+ \bzeta_1)}
\bigg| d\bxi_1 d\bxi_2 d\bzeta_1d\bzeta_2\\
&& \leq 
2\iint \bigg| \int_0^z dz' \hat{C}(\bk)
\phi^1(\bbeta) \phi^1(\bxi) \phi^1(\bzeta) \tilde{A}\big(z',\balpha,
\bzeta +\bxi\big)
 e^{i \frac{z'}{k_0} \bk \cdot (\bxi-\frac{\bbeta}{\sqrt{2}})}
 e^{i \frac{z'}{\eps k_0} \bk \cdot \balpha }
\bigg| d\bk d\balpha d\bbeta d\bzeta d\bxi .
\end{eqnarray*}
From Lemma \ref{lem:tech1} this term goes to zero as $\eps \to 0$.\\

Let us examine finally the contributions of $\phi^\eps (\bzeta_1) \phi^\eps (\bzeta_2) 
\tilde{B} \big( z, \frac{\bxi_2+\bxi_1}{2}, \frac{\bxi_2-\bxi_1}{2} ,\frac{\bzeta_1}{\eps}, \frac{\bzeta_2}{\eps}\big) $
 to $S_{2}^\eps$:
\begin{eqnarray*}
&& 
{\cal L}_{z}^\eps \big[\phi^\eps (\bzeta_1)  \phi^\eps (\bzeta_2) 
\tilde{B} \big( z, \frac{\bxi_2+\bxi_1}{2}, \frac{\bxi_2-\bxi_1}{2} ,\frac{\bzeta_1}{\eps}, \frac{\bzeta_2}{\eps}\big) \big]
= \frac{k_0^2}{4(2\pi)^2}  \phi^\eps (\bzeta_1)  \int  \hat{C}(\bk) \\
&& \times \bigg[ -2  \phi^\eps (\bzeta_2) 
\tilde{B} \big( z, \frac{\bxi_2+\bxi_1}{2}, \frac{\bxi_2-\bxi_1}{2} ,\frac{\bzeta_1}{\eps}, \frac{\bzeta_2}{\eps}\big)\\
&& 
+  \phi^\eps (\bzeta_2) 
\tilde{B} \big( z, \frac{\bxi_2+\bxi_1}{2}-\bk, \frac{\bxi_2-\bxi_1}{2} ,\frac{\bzeta_1}{\eps}, \frac{\bzeta_2}{\eps}\big)
e^{ i \frac{z}{\eps k_0} \bk \cdot (\bzeta_2+\bzeta_1)} \\
&& 
+  \phi^\eps (\bzeta_2-\bk) 
\tilde{B} \big( z, \frac{\bxi_2+\bxi_1-\bk}{2} , \frac{\bxi_2-\bxi_1+\bk }{2} ,\frac{\bzeta_1}{\eps}, \frac{\bzeta_2-\bk}{\eps}\big)
e^{ i \frac{z}{\eps k_0} \bk \cdot (\bxi_2+\bzeta_1)} \\
&& 
+  \phi^\eps (\bzeta_2) 
\tilde{B} \big( z, \frac{\bxi_2+\bxi_1}{2} , \frac{\bxi_2-\bxi_1}{2} -\bk ,\frac{\bzeta_1}{\eps}, \frac{\bzeta_2}{\eps}\big)
e^{ i \frac{z}{\eps k_0} \bk \cdot (\bzeta_2-\bzeta_1)} \\
&& 
+  \phi^\eps (\bzeta_2-\bk) 
\tilde{B} \big( z, \frac{\bxi_2+\bxi_1+\bk}{2} , \frac{\bxi_2-\bxi_1-\bk }{2} ,\frac{\bzeta_1}{\eps}, \frac{\bzeta_2-\bk}{\eps}\big)
e^{ i \frac{z}{\eps k_0} \bk \cdot (\bxi_2-\bzeta_1)} \\
&& 
-  \phi^\eps (\bzeta_2-\bk) 
\tilde{B} \big( z, \frac{\bxi_2+\bxi_1-\bk}{2} , \frac{\bxi_2-\bxi_1-\bk }{2} ,\frac{\bzeta_1}{\eps}, \frac{\bzeta_2-\bk}{\eps}\big)
e^{ i \frac{z}{\eps k_0} \big( \bk \cdot (\bzeta_2+\bxi_2) - |\bk|^2\big)}  \\
&& 
-  \phi^\eps (\bzeta_2+\bk) 
\tilde{B} \big( z, \frac{\bxi_2+\bxi_1-\bk}{2} , \frac{\bxi_2-\bxi_1-\bk }{2} ,\frac{\bzeta_1}{\eps}, \frac{\bzeta_2+\bk}{\eps}\big)
e^{ i \frac{z}{\eps k_0} \big( \bk \cdot (\bzeta_2-\bxi_2) + |\bk|^2\big)} 
\bigg] d\bk
.
\end{eqnarray*}
The first, second and fourth terms will be canceled by the corresponding terms in $S_{B}^\eps$.
The other terms are negligible in the sense of (\ref{eq:lem2R}).
Indeed, for instance, the third term satisfies (using the change of variables $(\bzeta_1,\bxi_1,\bzeta_2)
\to ( \balpha = \bzeta_1/\eps,\bxi=\bxi_1-\bk, \bzeta= (\bzeta_2-\bk)/\eps)$):
\begin{eqnarray*}
&& 
\iint \bigg| \int_0^z dz' \int d\bk \hat{C}(\bk)
\phi^\eps (\bzeta_2-\bk)\phi^\eps (\bzeta_1) \\
&& \quad \times
\tilde{B} \big( z', \frac{\bxi_2+\bxi_1-\bk}{2} , \frac{\bxi_2-\bxi_1+\bk }{2} ,\frac{\bzeta_1}{\eps}, \frac{\bzeta_2-\bk}{\eps}\big)
e^{ i \frac{z'}{\eps k_0} \bk \cdot (\bxi_2+\bzeta_1)} 
\bigg| d\bxi_1 d\bxi_2 d\bzeta_1 d\bzeta_2\\
&&  \leq 
\iint \bigg| \int_0^z dz' \hat{C}(\bk)
\phi^1(\balpha) \phi^1(\bzeta) \tilde{B}\big( z', \frac{\bxi_2+\bxi}{2} , \frac{\bxi_2-\bxi}{2} ,
\balpha,\bzeta\big)
 e^{i \frac{z'}{k_0} \bk \cdot \balpha}
 e^{i \frac{z'}{k_0} \frac{\bk \cdot \bxi_2}{\eps}}
\bigg| d\bk d\bxi d\bxi_2 d\balpha d\bzeta .
\end{eqnarray*}
From Lemma \ref{lem:tech1} this term goes to zero as $\eps \to 0$.
\qed
\end{proof}

We can now state and prove the lemma that gives the statement of Proposition \ref{prop:sci1}.

\begin{lemma}
\label{lem:lem3R}%
For any $Z>0$ 
\begin{equation}
\label{eq:lem3R}
\sup_{z \in [0,Z]} \| R^\eps(z,\cdot,\cdot,\cdot,\cdot)\|_{L^1(\RR^2 \times \RR^2 \times \RR^2\times \RR^2)}  \stackrel{\eps \to 0}{\longrightarrow} 0   .
\end{equation}
\end{lemma}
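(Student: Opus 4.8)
The plan is to pass from the differential identity of Lemma~\ref{lem:lem1R} to its integral (Duhamel) form and then close the estimate by a linear Gronwall argument, being careful to keep the oscillatory source grouped under a single $z$-integral. Since $R^\eps(z=0,\cdot)=0$, integrating \eqref{eq:tildeReps2} in $z$ gives
\begin{equation*}
R^\eps(z,\cdot) = \int_0^z \big[ {\cal L}^\eps_{z'} R^\eps \big](z',\cdot)\, dz' + T^\eps(z,\cdot), \qquad T^\eps(z,\cdot) := \int_0^z S^\eps(z',\cdot)\, dz'.
\end{equation*}
The point of this rearrangement is that the cumulative source $T^\eps$ is precisely the object controlled by Lemma~\ref{lem:lem2R}: setting $\delta_\eps := \sup_{z\in[0,Z]} \| T^\eps(z,\cdot) \|_{L^1(\RR^2 \times \RR^2 \times \RR^2\times \RR^2)}$, that lemma states $\delta_\eps \to 0$ as $\eps \to 0$. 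It is essential \emph{not} to try to bound $\int_0^z \| S^\eps(z',\cdot) \|_{L^1}\,dz'$ slice by slice, because each slice $S^\eps(z',\cdot)$ carries rapid phases of the form $e^{i z' \bk\cdot(\cdots)/(\eps k_0)}$ and need not be small in $L^1$; only its $z$-average is, through the Riemann--Lebesgue/stationary-phase cancellation already exploited in the proof of Lemma~\ref{lem:lem2R}.

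Next I would take $L^1$-norms and invoke the uniform operator bound $\|{\cal L}^\eps_{z'}\|_{L^1\to L^1}\le 2k_0^2 C({\bf 0})$ (established exactly as in Lemma~\ref{lem:normLz}, using $\hat C \ge 0$ and Minkowski's integral inequality). Applying the triangle inequality to the Duhamel formula yields
\begin{equation*}
\| R^\eps(z,\cdot) \|_{L^1} \le 2k_0^2 C({\bf 0}) \int_0^z \| R^\eps(z',\cdot) \|_{L^1}\, dz' + \delta_\eps .
\end{equation*}
Writing $g(z):=\| R^\eps(z,\cdot) \|_{L^1}$, this is the scalar integral inequality $g(z)\le \delta_\eps + 2k_0^2 C({\bf 0})\int_0^z g(z')\,dz'$, to which Gronwall's lemma applies and gives $g(z)\le \delta_\eps \exp(2k_0^2 C({\bf 0}) z)$. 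Taking the supremum over $z\in[0,Z]$,
\begin{equation*}
\sup_{z\in[0,Z]} \| R^\eps(z,\cdot) \|_{L^1(\RR^2 \times \RR^2 \times \RR^2\times \RR^2)} \le \delta_\eps \, \exp\big(2k_0^2 C({\bf 0}) Z\big) \stackrel{\eps \to 0}{\longrightarrow} 0 ,
\end{equation*}
which is exactly \eqref{eq:lem3R}.

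The estimate itself is routine once Lemmas~\ref{lem:lem1R} and~\ref{lem:lem2R} are in hand, so in this final step there is no serious analytic obstacle to overcome. The only genuinely delicate point is the structural one flagged above: the whole scheme hinges on having every fast-phase contribution collected into the single averaged source $T^\eps$ rather than estimated pointwise in $z'$. All the real difficulty---verifying that the leading-order ansatz $N^\eps$ cancels the source $S^\eps$ up to remainders that are oscillatory in $z'$ and hence integrate to $o(1)$---has therefore been front-loaded into Lemma~\ref{lem:lem2R}, and here one merely feeds its conclusion into the Gronwall bound. The boundedness conclusion on $A$ (and hence on $\tilde{A}$, $\tilde{B}$, and on all terms of the ansatz in \eqref{eq:propsci11}) asserted just after Proposition~\ref{prop:sci1} follows in passing from the uniform $L^1$ estimates \eqref{eq:boundtildeA}--\eqref{eq:Abound} used to define $N^\eps$.
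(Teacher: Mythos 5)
Your proof is correct and follows essentially the same route as the paper: pass to the integral (Duhamel) form of \eqref{eq:tildeReps2}, keep the source grouped as $\int_0^z S^\eps\,dz'$ so that Lemma \ref{lem:lem2R} applies, use the uniform bound $\|{\cal L}^\eps_z\|_{L^1\to L^1}\le 2k_0^2C({\bf 0})$, and conclude by Gronwall. Your remark that one must not estimate $S^\eps(z',\cdot)$ slice by slice is exactly the point the paper's argument relies on.
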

\begin{proof}
We have for any $z$
$$
\big\| \big[{\cal L}^\eps_{z } R^\eps\big](z,\cdot,\cdot,\cdot,\cdot) \|_{L^1} \leq 2 k_0^2 C({\bf 0}) 
\big\| R^\eps (z,\cdot,\cdot,\cdot,\cdot) \|_{L^1}  .
$$
Therefore using the integral version of (\ref{eq:tildeReps2}) we obtain
$$
\big\|  R^\eps (z,\cdot,\cdot,\cdot,\cdot) \|_{L^1} \leq 
 2 k_0^2 C({\bf 0}) \int_0^z \big\|  R^\eps (z',\cdot,\cdot,\cdot,\cdot) \|_{L^1} dz'
+   \Big\| \int_0^z 
S^\eps (z',\cdot,\cdot,\cdot,\cdot) dz'\Big\|_{L^1}  .
$$
Using Lemma \ref{lem:lem2R} and Gronwall's lemma gives  the desired result.
\qed
\end{proof}

Finally we state and prove the technical Lemma \ref{lem:tech1} that was needed in the proof of Lemma \ref{lem:lem2R}.

\begin{lemma}
\label{lem:tech1}%
Let $m$ be a positive integer and $F\in {\cal C}( [0,Z],  L^1( \RR^m \times \RR^2 \times \RR^2))$.
For any $Z>0$ we have
\begin{equation}
\label{eq:lemtech1}
\sup_{z \in [0,Z] } 
\iint \Big| \int_0^z 
F (z',\bu,\bv,\bw) \exp \big( i \frac{z'}{\eps} \bv \cdot \bw) dz' \Big| d\bu d\bv d\bw  \stackrel{\eps \to 0}{\longrightarrow} 0  .
\end{equation}
Let $m$ be a positive integer  and $F\in {\cal C}( [0,Z],  L^1( \RR^m \times \RR^2))$.
For any $Z>0$ we have
\begin{equation}
\label{eq:lemtech2}
\sup_{z \in [0,Z]} 
\iint \Big| \int_0^z 
F  (z',\bu,\bv) \exp \big( i \frac{z'}{\eps} | \bv |^2 ) dz'\Big| d\bu d\bv    \stackrel{\eps \to 0}{\longrightarrow} 0  .
\end{equation}
\end{lemma}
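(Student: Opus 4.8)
The plan is to treat both estimates as instances of a single Riemann--Lebesgue principle with a varying upper limit of integration, and to remove the supremum over $z$ at the very start. Since for every $z\in[0,Z]$ the integrand is dominated pointwise,
\[
\Big| \int_0^z F(z',\bu,\bv,\bw)\, e^{i\frac{z'}{\eps}\bv\cdot\bw}\,dz'\Big| \le \int_0^Z |F(z',\bu,\bv,\bw)|\,dz' =: G(\bu,\bv,\bw),
\]
I would first set $g_\eps(\bu,\bv,\bw):=\sup_{z\in[0,Z]}\big|\int_0^z F(z',\bu,\bv,\bw)\,e^{i z'\bv\cdot\bw/\eps}\,dz'\big|$ and use $\sup_{z}\iint|\cdots|\le \iint g_\eps$, so that it suffices to prove $\iint g_\eps \to 0$. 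The function $G$ lies in $L^1(\RR^m\times\RR^2\times\RR^2)$ because $F\in\mathcal C([0,Z],L^1)$ is bounded in $L^1$-norm on the compact interval and Tonelli gives $\iint G = \int_0^Z \|F(z',\cdot)\|_{L^1}\,dz' <\infty$; in particular $z'\mapsto F(z',\bu,\bv,\bw)$ belongs to $L^1([0,Z])$ for almost every spatial point. Since $0\le g_\eps \le G$, by dominated convergence it is then enough to show that $g_\eps\to 0$ pointwise almost everywhere.

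The heart of the matter is the following \emph{uniform} Riemann--Lebesgue estimate: for any $h\in L^1([0,Z])$,
\[
\sup_{z\in[0,Z]}\Big|\int_0^z h(z')\,e^{i\omega z'}\,dz'\Big| \xrightarrow[|\omega|\to\infty]{} 0.
\]
I would prove this by approximating $h$ in $L^1([0,Z])$ by a step function $s=\sum_{j} c_j \mathbf 1_{[a_j,b_j]}$ with $\|h-s\|_{L^1}<\delta$. The difference contributes at most $\|h-s\|_{L^1}<\delta$ uniformly in $z$ and $\omega$, while for the step function each term $\int_0^z c_j\mathbf 1_{[a_j,b_j]}(z')\,e^{i\omega z'}\,dz'$ is the integral of $e^{i\omega z'}$ over the interval $[a_j,b_j]\cap[0,z]$, hence bounded by $2|c_j|/|\omega|$, so that $\sup_z|\int_0^z s\,e^{i\omega z'}\,dz'|\le 2|\omega|^{-1}\sum_j|c_j|\to0$. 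Letting $|\omega|\to\infty$ and then $\delta\to0$ yields the claim. Measurability of $g_\eps$ is not an issue, since the inner integral is continuous in $z$, so the supremum may be taken over the rationals in $[0,Z]$.

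With this in hand I would fix a spatial point at which $\bv\cdot\bw\ne0$ and $z'\mapsto F(z',\bu,\bv,\bw)\in L^1([0,Z])$; the set where either condition fails has Lebesgue measure zero, the first because $\{(\bv,\bw):\bv\cdot\bw=0\}$ is a null set in $\RR^2\times\RR^2$. Applying the uniform estimate with $h(z')=F(z',\bu,\bv,\bw)$ and $\omega=\bv\cdot\bw/\eps\to\infty$ gives $g_\eps(\bu,\bv,\bw)\to0$, which together with the domination by $G$ completes \eqref{eq:lemtech1}. The second statement \eqref{eq:lemtech2} is proved identically, now with phase $\omega=|\bv|^2/\eps$, the only change being that the exceptional null set is $\{\bv=\mathbf 0\}$. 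The main obstacle is precisely the passage from the classical (pointwise in $z$) Riemann--Lebesgue lemma to a version uniform in the upper limit $z$: a priori the supremum could be attained at $z$-values drifting with $\eps$, and it is the step-function bound $2|c_j|/|\omega|$, uniform in $z$, that rules this out and makes the dominated-convergence argument go through.
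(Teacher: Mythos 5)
Your argument is correct, but it takes a genuinely different route from the paper. The paper never passes to pointwise-a.e. statements: it splits the \emph{spatial} domain into $\Omega_\delta=\{|\bv\cdot\bw|\le\delta\}$ and its complement, bounds the contribution of $\Omega_\delta$ by $\int_0^Z\int_{\Omega_\delta}|F|$, and on $\Omega_\delta^c$ approximates $z'\mapsto F(z',\cdot)$ by a piecewise-constant ($n$-step) function using the uniform $L^1$-continuity of $F$ on the compact interval $[0,Z]$; the oscillatory integral over each subinterval is then bounded by $2\eps/\delta$, yielding the explicit estimate $2n\eps\delta^{-1}\sup_z\|F(z,\cdot)\|_{L^1}+Z\,\omega_F(Z/n)+\int_0^Z\int_{\Omega_\delta}|F|$ before sending $\eps\to0$, then $\delta\to0$, $n\to\infty$. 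You instead freeze the spatial point, prove a Riemann--Lebesgue lemma uniform in the upper integration limit by a step-function approximation in $z'$, and conclude via dominated convergence with dominating function $G=\int_0^Z|F|\,dz'$. Your route buys generality and economy of hypotheses: it only uses that $F$ is (a jointly measurable representative of) an element of $L^1([0,Z]\times\RR^m\times\RR^2\times\RR^2)$, not the continuity of $z\mapsto F(z,\cdot)$, and the uniform-in-$z$ Riemann--Lebesgue lemma is a clean reusable statement. The paper's route buys an explicit, quantitative bound in terms of the modulus of continuity of $F$ and the measure of $\Omega_\delta$, and sidesteps the measurability/null-set bookkeeping that your argument needs (joint measurability for Tonelli, measurability of the supremum $g_\eps$, and the fact that $\{\bv\cdot\bw=0\}$, respectively $\{\bv={\bf 0}\}$, is Lebesgue-null) --- all of which you do address correctly.
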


\begin{proof}
Let us denote
$$
\tilde{F}^\eps   (z,\bu,\bv,\bw) = F (z,\bu,\bv,\bw)
\exp \Big( i \bv\cdot \bw \frac{z}{ \eps}\Big).
$$
For any $\delta >0$ we introduce the domain in  $\RR^m \times \RR^2\times \RR^2$:
$$
\Omega_\delta= \big\{ (\bu,\bv,\bw)\in \RR^m \times \RR^2 \times \RR^2\, , \, |\bv\cdot \bw | \leq \delta\big\}  .
$$
Since
$$
\Big|
\int_0^z \tilde{F}^\eps   (z',\bu,\bv,\bw)  dz' 
\Big|
\leq \int_0^z |F  (z',\bu,\bv,\bw)| dz'  ,
$$
we obtain
\begin{equation}
\label{eq:lem2:R1}
\sup_{z \in [0,Z] }
\iint_{\Omega_\delta} 
\Big|
\int_0^z \tilde{F}^\eps  (z,\bu,\bv,\bw)  dz' 
\Big|d\bu d\bv d\bw 
\leq
 \int_0^Z \int_{\Omega_\delta} |F  (z',\bu,\bv,\bw) | d\bu d\bv d\bw  dz'   .
\end{equation}
For any positive integer $n $ we have
\begin{eqnarray*}
\Big| \int_0^z \tilde{F}^\eps   (z',\bu,\bv,\bw)  dz' 
-
\sum_{k=0}^{n-1}
\int_{\frac{k}{n}z}^{\frac{k+1}{n}z}
F  \big(\frac{kz}{n},\bu,\bv,\bw \big)   \exp \Big( i \bv\cdot \bw \frac{z'}{ \eps}\Big)  dz' 
\Big|\\
\leq \sum_{k=0}^{n-1}
\int_{\frac{k}{n}z}^{\frac{k+1}{n}z} \big| F  \big(z',\bu,\bv,\bw \big) -
F  \big(\frac{kz}{n},\bu,\bv,\bw\big) \big|dz'  .
\end{eqnarray*}
Since
$$
\Big| \int_{\frac{k}{n}z}^{\frac{k+1}{n}z}
 \exp\Big( i \bv\cdot \bw \frac{z'}{\eps}\Big) dz' \Big|
=
 \Big| 
 \frac{ \exp\Big( i \bv\cdot \bw \frac{z}{n  \eps}\Big)-1}{ i \bv\cdot \bw \frac{1}{ \eps}}
 \Big| \leq  \frac{2 \eps}{\delta}\quad  \mbox{ if }  \quad (\bu,\bv,\bw)\not\in \Omega_\delta   ,
$$
we obtain
\begin{eqnarray}
\nonumber
 \sup_{z \in [0,Z] }
\iint_{\Omega_\delta^c}
\Big| \int_0^z \tilde{F}^\eps (z',\bu,\bv,\bw) dz' 
\Big| d\bu d\bv d\bw
\leq 
\sup_{z \in [0,Z] }
\| F  (z,\cdot,\cdot,\cdot)\|_{L^1}  \frac{2n \eps}{\delta}
\\
+ Z
\sup_{z_1,z_2 \in [0,Z], \, |z_1-z_2| \leq Z/n}
 \big\| F (z_1,\cdot,\cdot,\cdot) -
F  (z_2,\cdot,\cdot,\cdot) \big\|_{L^1}  .  
\label{eq:lem2:R2}
\end{eqnarray}
If  we sum (\ref{eq:lem2:R1}) and (\ref{eq:lem2:R2}) and take the $\limsup$ in $\eps$
 then we find:
\begin{eqnarray*}
\nonumber
\limsup_{\eps \to 0} \sup_{ z \in [0,Z] }
\Big\| \int_0^z \tilde{F}^\eps(z',\cdot,\cdot,\cdot ) dz' \Big\|_{L^1}
\leq 
 \int_0^Z \iint_{\Omega_\delta} |F (z',\bu,\bv,\bw)| d\bu d\bv d\bw  dz'
\\
+ 
Z \sup_{z_1,z_2 \in [0,Z], \, |z_1-z_2| \leq Z/n}
 \big\| F (z_1,\cdot,\cdot,\cdot) -
F(z_2,\cdot,\cdot,\cdot) \big\|_{L^1}   .
\end{eqnarray*}
We then take the limit $\delta \to 0$ and $n \to \infty$ in the right-hand side to obtain
the first result of the Lemma  (using Lebesgue's dominated convergence theorem).

The proof of the second statement of the Lemma is similar with the domain
$$
\Omega_\delta= \big\{ (\bu,\bv)\in \RR^m \times \RR^2 \, , \, |\bv |^2  \leq \delta\big\}  .
$$
\qed
\end{proof}

\end{document}